\documentclass[a4paper,12pt]{article}

\usepackage{amscd,amssymb,amsmath,amsfonts,amsthm, mathrsfs}
\usepackage{graphicx}
\usepackage{verbatim,enumerate,paralist}
\usepackage{tensor}
\usepackage{textcomp}
\usepackage[colorlinks]{hyperref}
\hypersetup{%
  urlcolor=blue,linkcolor=blue,citecolor=blue
}
\usepackage{pdfpages}
\usepackage[T1]{fontenc}

\usepackage[arrow, matrix, tips, curve, graph, rotate]{xy}
\SelectTips{cm}{10}
\newcommand{\cd}[2][]{\vcenter{\hbox{\xymatrix#1{#2}}}}


\makeatletter
\def\matrixobject@{%
  \edef \next@{={\DirectionfromtheDirection@ }}%
  \expandafter \toks@ \next@ \plainxy@
  \let\xy@@ix@=\xyq@@toksix@
  \xyFN@ \OBJECT@}
\let\xy@entry@@norm=\entry@@norm
\def\entry@@norm@patched{%
  \let\object@=\matrixobject@
  \xy@entry@@norm }
\AtBeginDocument{\let\entry@@norm\entry@@norm@patched}
\makeatother

\newcommand{\twocong}[2][0.5]{\ar@{}[#2] \save ?(#1)*{\cong}\restore}
\newcommand{\twoeq}[2][0.5]{\ar@{}[#2] \save ?(#1)*{=}\restore}
\newcommand{\ltwocell}[3][0.5]{\ar@{}[#2] \ar@{=>}?(#1)+/r 0.2cm/;?(#1)+/l 0.2cm/^{#3}}
\newcommand{\rtwocell}[3][0.5]{\ar@{}[#2] \ar@{=>}?(#1)+/l 0.2cm/;?(#1)+/r 0.2cm/^{#3}}
\newcommand{\utwocell}[3][0.5]{\ar@{}[#2] \ar@{=>}?(#1)+/d  0.2cm/;?(#1)+/u 0.2cm/_{#3}}
\newcommand{\dtwocell}[3][0.5]{\ar@{}[#2] \ar@{=>}?(#1)+/u  0.2cm/;?(#1)+/d 0.2cm/^{#3}}
\newcommand{\ultwocell}[3][0.5]{\ar@{}[#2] \ar@{=>}?(#1)+/dr  0.2cm/;?(#1)+/ul 0.2cm/^{#3}}
\newcommand{\urtwocell}[3][0.5]{\ar@{}[#2] \ar@{=>}?(#1)+/dl  0.2cm/;?(#1)+/ur 0.2cm/^{#3}}
\newcommand{\dltwocell}[3][0.5]{\ar@{}[#2] \ar@{=>}?(#1)+/ur  0.2cm/;?(#1)+/dl 0.2cm/^{#3}}
\newcommand{\drtwocell}[3][0.5]{\ar@{}[#2] \ar@{=>}?(#1)+/ul  0.2cm/;?(#1)+/dr 0.2cm/^{#3}}

\setlength{\marginparwidth}{0.8cm}

\usepackage[usenames,dvipsnames]{pstricks}
\usepackage{epsfig}
\usepackage{pst-grad} 
\usepackage{pst-plot} 

\newcounter{Definitioncount}
\setcounter{Definitioncount}{0}

\newtheorem{theorem}{Theorem}
\newtheorem{lemma}[theorem]{Lemma}
\newtheorem{proposition}[theorem]{Proposition}
\newtheorem{corollary}[theorem]{Corollary}


\newtheorem{definition}[theorem]{Definition}
\newtheorem{remark}[theorem]{Remark}
\newtheorem{example}[theorem]{Example}

\newdir{ >}{\xybox{*{}+/2.5mm/*@{>}}} 

\newtheoremstyle{fact}{\bigskipamount}{\medskipamount}{\upshape}{}{\itshape}{. }{ }{Fact}
\theoremstyle{fact}

\newtheoremstyle{genquest}{\bigskipamount}{\medskipamount}{\upshape}{}{\itshape}{. }{ }{General Question}
\theoremstyle{genquest}

\newtheoremstyle{step}{2\bigskipamount}{\medskipamount}{\upshape}{}{\itshape}{. }{ }{\underline{Step~\thestep}}
\theoremstyle{step}

\renewcommand{\thestep}{\arabic{step}}

\newcommand{\lra}{\longrightarrow}

\newcommand{\Lra}{\Longrightarrow}

\newcommand{\ldual}[1]{\mathord{{\let\nolimits\relax\sideset{^\wedge}{}{#1}}}}
\newcommand{\laction}[2]{\mathord{{\let\nolimits\relax\sideset{^{#1}}{}{#2}}}}
\newcommand{\conj}[2]{\mathord{{\let\nolimits\relax\sideset{^{#1}}{}{#2}}}}

\newcommand{\dd}{\colon}

\def\CA{{\mathscr A}}
\def\CB{{\mathscr B}}

\def\CK{{\mathscr K}}

\def\CV{{\mathscr V}}
\def\CW{{\mathscr W}}
\def\CX{{\mathscr X}}


\def\KK{{\mathfrak K}}

\begin{document}

\author{Ross Street\footnote{The author gratefully acknowledges the support of Australian Research Council Discovery Grant DP130101969.} \\
{\small{Centre of Australian Category Theory, Macquarie University, NSW 2109, Australia}} \\
{\small{<ross.street@mq.edu.au>}}}

\title{Wreaths, mixed wreaths and twisted coactions}
\date{{\small \today}}
\maketitle

\noindent {\small{\emph{2010 Mathematics Subject Classification:} 18D10; 05A15; 18A32; 18D05; 20H30; 16T30}
\\
{\small{\emph{Key words and phrases:} monad; comonad; wreath; Heisenberg product; convolution; mixed distributive law; twisted action; bialgebra.}}

\begin{abstract}
\noindent Distributive laws between two monads in a 2-category $\CK$, as defined by Jon Beck in the case 
$\CK=\mathrm{Cat}$, 
were pointed out by the author to be monads in a 2-category $\mathrm{Mnd}\CK$ of monads. 
Steve Lack and the author defined wreaths to be monads in a 2-category 
$\mathrm{EM}\CK$ of monads with different 2-cells from $\mathrm{Mnd}\CK$.

Mixed distributive laws were also considered by Jon Beck, Mike Barr and, later, various others; 
they are comonads in $\mathrm{Mnd}\CK$.  
Actually, as pointed out by John Power and Hiroshi Watanabe, there are a number of dual 
possibilities for mixed distributive laws. 

It is natural then to consider mixed wreaths as we do in this article; 
they are comonads in $\mathrm{EM}\CK$.
There are also mixed opwreaths: comonoids in the Kleisli construction completion 
$\mathrm{Kl}\CK$ of $\CK$. 
The main example studied here arises from a twisted coaction of a bimonoid on a monoid.
Corresponding to the wreath product on the mixed side is wreath convolution,
which is composition in a Kleisli-like construction. 
Walter Moreira's Heisenberg product of linear endomorphisms on a Hopf algebra, is an example of such convolution, actually involving merely a mixed distributive law. 
Monoidality of the Kleisli-like construction is also discussed.
     
\end{abstract}

\tableofcontents

\section{Introduction}\label{Intro}
\noindent While trying to expose the categorical mechanism
behind the Heisenberg product of endomorphisms, as defined and studied in \cite{AFM2015, MoreiraPhD}, we noticed that it has to do with distributive laws 
in the sense of Beck \cite{Beck1969}; 
also see \cite{3, 45} for the general setting and monad terminology. 
A distributive law $\xi\dd TS\Rightarrow ST$ of a monad $T$ 
over a monad $S$ on a category $\CA$ 
gives rise to a monad structure on the
composite $ST$.
For a mixed distributive law $\zeta\dd SG\Rightarrow GS$ of a comonad $G$
over a monad $S$, we do not expect $GS$ to be a monad or comonad, so what takes its place?
It is the (internalized) $\zeta$-parametrized convolution of 2-cells $G\Rightarrow S$. 
To understand this to some extent (externally), consider the 
case where $T\dashv G$, and $\xi$ and $\zeta$ are 
mates \cite{7} under that adjunction. 
The adjunction gives an isomorphism
\begin{eqnarray*}
[\CA,\CA](1_{\CA},ST)\cong [\CA,\CA](G,S) 
\end{eqnarray*}
The monoid structure on the left-hand side, arising
pointwise from the monad structure on $ST$ determined by $\xi$,
transports to a convolution-like monoid structure on the right-hand side,
expressible in terms of $\zeta$.

Rather than remain at the level of distributive law, since there are
articles \cite{HoPa2001, PoWa2002, HHM2007} which study that, 
we decided to generalize to the wreaths of \cite{71}. 

This article begins with a review of wreaths as defined in \cite{71}.
We spend some time extending Example 3.2 of \cite{71} to a 
wreath between monoids rather than groups: the use of fibrations
is to bring out the cohomological aspects which permeate the paper.

As for mixed distributive laws \cite{PoWa2002}, there are several 
possibilities for mixed wreaths. We look at those which are comonads
in either the (limit) completion of the ambient 2-category under Eilenberg-Moore
construction or the (colimit) completion under the Kleisli construction.
The first are called mixed wreaths, the second mixed opwreaths.
Mixed Eilenberg-Moore and mixed Kleisli constructions are described
and their universal properties presented. Composition in a mixed
Kleisli category is convolution parametrized by the mixed wreath. 

Section~\ref{sectiontwcoact} provides the construction of a mixed opwreath
is a dual of the wreath construction appearing as Example 3.3 in \cite{71} based on 
Sweedler's crossed product of Hopf algebras.
We also generalize to bialgebras (bimonoids).
The ingredient is a twisted coaction of a bimonoid on a monoid.
Natural connections to cohomological structures are pursued.

Section~\ref{monoidality} sets out when a mixed opwreath is opmonoidal.
This is about a monoidal structure on the mixed Kleisli construction. 
The final section gives structure on a twisted coaction so that the
associated mixed opwreath becomes opmonoidal.

We will use the string diagrams for monoidal categories as
explained in \cite{37}. However, we read the diagrams from top
to bottom rather than the reverse. 
For example, if $A$ is a monoid in any monoidal category $\CV$, 
the multiplication $\mu = \mu_A$ and unit $\eta = \eta_A$ are respectively depicted as follows.
\begin{center}
\psscalebox{0.6 0.6} 
{
\begin{pspicture}(0,-1.2563057)(3.9765608,1.2563057)
\psline[linecolor=black, linewidth=0.04](0.016560897,1.244001)(0.8165609,0.064001024)(1.736561,1.244001)
\psline[linecolor=black, linewidth=0.04](0.8165609,0.10400102)(0.7965609,-1.255999)
\pscircle[linecolor=black, linewidth=0.04, dimen=outer](3.836561,0.264001){0.14}
\psline[linecolor=black, linewidth=0.04](3.836561,0.16400103)(3.816561,-1.195999)
\end{pspicture}
}
\end{center}
If we are dealing with
a braided monoidal category, the braiding $c_{X,Y}\dd X\otimes Y\to Y\otimes X$ will be depicted as a crossing as follows.
\begin{center}
\psscalebox{0.6 0.6} 
{
\begin{pspicture}(0,-1.6242286)(3.65,1.6242286)
\psline[linecolor=black, linewidth=0.04](3.38,1.6099999)(0.2,-1.57)
\psline[linecolor=black, linewidth=0.04](0.18,1.6099999)(1.66,0.14999995)
\psline[linecolor=black, linewidth=0.04](1.9,-0.11000006)(3.42,-1.61)
\rput[bl](0.0,1.17){$X$}
\rput[bl](3.28,1.17){$Y$}
\rput[bl](3.34,-1.47){$X$}
\rput[bl](0.06,-1.47){$Y$}
\end{pspicture}
}\end{center}

\section{Review of wreaths}\label{row}
 
The free completion $\mathrm{EM}(\mathfrak{K})$ of a 2-category $\mathfrak{K}$
(such as $\mathrm{Cat}$) under the Eilenberg-Moore construction
was identified in \cite{71}.
The objects of $\mathrm{EM}(\mathfrak{K})$ are monads $(\CA,T)$ in $\mathfrak{K}$.
That is, $T$ is a monoid in the endomorphism category $\mathfrak{K}(\CA,\CA)$, 
monoidal under composition as tensor product, so we can draw planar diagrams.
A morphism $(F,\phi)\dd (\CA,T) \to (\CB,S)$ consists of a morphism
$F\dd \CA\to \CB$ and a 2-cell $\phi \dd SF\Rightarrow FT$ in $\mathfrak{K}$
compatible with the monad structures on $T$ and $S$.
A 2-cell $\rho \dd (F,\phi)\Rightarrow (G,\psi)$ is a 2-cell $\rho \dd F\Rightarrow GS$
such that the following equation holds.
\begin{center}
\psscalebox{0.7 0.7} 
{
\begin{pspicture}(0,-2.9952807)(9.43,2.9952807)
\pscircle[linecolor=black, linewidth=0.04, dimen=outer](2.6,1.1936566){0.32}
\rput[bl](2.48,1.0536566){$\phi$}
\pscircle[linecolor=black, linewidth=0.04, dimen=outer](1.08,-0.6463434){0.32}
\rput[bl](0.98,-0.7863434){$\rho$}
\psline[linecolor=black, linewidth=0.04](0.62,2.9736567)(2.42,1.3936566)
\psline[linecolor=black, linewidth=0.04](3.78,2.9736567)(2.8,1.3736566)
\psline[linecolor=black, linewidth=0.04](2.42,0.9736566)(1.1,-0.3663434)
\psline[linecolor=black, linewidth=0.04](0.88,-0.86634344)(0.22,-2.9863434)
\psline[linecolor=black, linewidth=0.04](1.28,-0.86634344)(2.74,-1.7463434)(2.74,0.95365655)
\psline[linecolor=black, linewidth=0.04](2.72,-1.7463434)(3.34,-2.9863434)
\pscircle[linecolor=black, linewidth=0.04, dimen=outer](8.08,1.1936566){0.32}
\pscircle[linecolor=black, linewidth=0.04, dimen=outer](6.56,-0.6463434){0.32}
\rput[bl](8.0,1.0736566){$\rho$}
\psline[linecolor=black, linewidth=0.04](6.1,2.9936566)(6.38,-0.4263434)
\psline[linecolor=black, linewidth=0.04](9.26,2.9736567)(8.28,1.3736566)
\psline[linecolor=black, linewidth=0.04](7.9,0.9736566)(6.78,-0.44634342)
\psline[linecolor=black, linewidth=0.04](6.36,-0.86634344)(5.7,-2.9863434)
\psline[linecolor=black, linewidth=0.04](6.76,-0.86634344)(8.22,-1.7463434)(8.22,0.95365655)
\psline[linecolor=black, linewidth=0.04](8.2,-1.7463434)(8.82,-2.9863434)
\rput[bl](4.6,0.11365658){$=$}
\rput[bl](6.42,-0.80634344){$\psi$}
\rput[bl](0.58,2.4536567){$T$}
\rput[bl](3.66,2.4336567){$F$}
\rput[bl](0.0,-2.6263435){$G$}
\rput[bl](8.74,-2.6863434){$S$}
\rput[bl](9.16,2.4136565){$F$}
\rput[bl](1.46,0.27365658){$F$}
\rput[bl](5.74,2.4536567){$T$}
\rput[bl](3.24,-2.6663435){$S$}
\rput[bl](5.46,-2.6663435){$G$}
\rput[bl](6.96,0.23365659){$G$}
\end{pspicture}
}
\end{center}     
 
Also in \cite{71}, wreaths were introduced and defined concisely as
monads in the 2-category $\mathrm{EM}(\mathfrak{K})$. 
The wreath product is the monad obtained as the Eilenberg-Moore construction 
in $\mathrm{EM}(\mathfrak{K})$ on the wreath.
Indeed, as always with a completion under limits, 
$\mathrm{EM}$ is a (dual) Kock-Z\"oberlein monad \cite{Kock1995, FiB} on the 2-category
of 2-categories while the wreath product 
\begin{eqnarray*}
\mathrm{wr}= \mathrm{wr}_{\mathfrak{K}}\dd \mathrm{EM}(\mathrm{EM}(\mathfrak{K}))\lra \mathrm{EM}(\mathfrak{K})
\end{eqnarray*}
gives the multiplication for that monad; the unit
\begin{eqnarray*}
\mathrm{id}=\mathrm{id}_{\mathfrak{K}}\dd \mathfrak{K}\lra \mathrm{EM}(\mathfrak{K})
\end{eqnarray*}
simply takes each object $\CA$ to the identity monad on $\CA$.
When it exists, the Eilenberg-Moore construction for a 2-category $\mathfrak{K}$
is a right adjoint
\begin{eqnarray*}
\mathrm{em}=\mathrm{em}_{\mathfrak{K}}\dd \mathrm{EM}(\mathfrak{K}) \lra \mathfrak{K}
\end{eqnarray*}
to $\mathrm{id}$; we put $\CA^{T}= \mathrm{em}_{\CA}T= \mathrm{em}(\CA,T)$.

We will now describe wreaths explicitly using string diagrams.

Let $T=(T,\mu, \eta)$ be a monad on an object $\CA$ of $\CK$. 

A {\em wreath around} $T$ consists of an endomorphism $S$
on $\CA$, and 2-cells $\nu\dd SS\Lra ST$,
$\sigma\dd 1_{\CA}\Lra ST$ and $\lambda\dd TS\Lra ST$
satisfying seven axioms.   

\begin{center}
\psscalebox{0.7 0.7} 
{
\begin{pspicture}(0,-2.4881215)(17.315805,2.4881215)
\pscircle[linecolor=black, linewidth=0.04, dimen=outer](2.5358045,-1.0017898){0.26}
\pscircle[linecolor=black, linewidth=0.04, dimen=outer](5.8358045,-0.32178977){0.26}
\pscircle[linecolor=black, linewidth=0.04, dimen=outer](7.5958047,0.8982102){0.26}
\rput[bl](4.0958047,-0.16178976){=}
\psline[linecolor=black, linewidth=0.04](1.0958046,0.9782102)(2.3558047,-0.82178974)
\psline[linecolor=black, linewidth=0.04](2.3958046,-1.1817898)(1.6958046,-2.3417897)
\psline[linecolor=black, linewidth=0.04](2.6758046,-1.2017897)(3.3158045,-2.3817897)
\psline[linecolor=black, linewidth=0.04](2.7158046,-0.80178976)(3.2758045,2.4782102)
\psline[linecolor=black, linewidth=0.04](7.455805,1.0582103)(6.7558045,2.4382102)
\psline[linecolor=black, linewidth=0.04](7.7558045,1.0782102)(8.555804,2.4782102)
\psline[linecolor=black, linewidth=0.04](7.7158046,0.67821026)(7.5558047,-1.3817898)
\psline[linecolor=black, linewidth=0.04](5.1358047,2.3782103)(5.6958046,-0.18178976)
\psline[linecolor=black, linewidth=0.04](5.995805,-0.12178976)(7.475805,0.69821024)
\psline[linecolor=black, linewidth=0.04](6.015805,-0.50178975)(7.5558047,-1.3617897)
\psline[linecolor=black, linewidth=0.04](5.6958046,-0.54178977)(5.0758047,-2.3617897)
\psline[linecolor=black, linewidth=0.04](0.015804596,2.3382103)(1.1158046,0.91821027)(1.9558046,2.3382103)(1.9558046,2.3382103)
\psline[linecolor=black, linewidth=0.04](7.5558047,-1.3617897)(7.5558047,-2.4617898)
\rput[bl](7.495805,0.79821026){$\lambda$}
\rput[bl](5.7558045,-0.46178976){$\lambda$}
\rput[bl](2.4358046,-1.1217898){$\lambda$}
\rput[bl](1.3958046,-0.24178976){$T$}
\rput[bl](1.7358046,-1.8217897){$S$}
\rput[bl](3.0958047,-1.8017898){$T$}
\rput[bl](4.8758044,1.6382103){$T$}
\rput[bl](6.6758046,1.6182102){$T$}
\rput[bl](7.6158047,-2.0817897){$T$}
\rput[bl](1.2758046,1.6182102){$T$}
\rput[bl](0.055804595,1.6582103){$T$}
\rput[bl](6.3958044,-1.1817898){$T$}
\rput[bl](7.7358046,-0.44178975){$T$}
\rput[bl](4.995805,-1.8417897){$S$}
\rput[bl](6.495805,0.29821023){$S$}
\rput[bl](8.255805,1.5782102){$S$}
\rput[bl](3.1958046,1.5982102){$S$}
\rput[bl](4.1758046,0.39821023){1}
\pscircle[linecolor=black, linewidth=0.04, dimen=outer](12.415805,-0.021789761){0.26}
\rput[bl](12.335805,-0.16178976){$\lambda$}
\psline[linecolor=black, linewidth=0.04](12.255805,0.11821024)(11.495805,1.3982103)
\pscircle[linecolor=black, linewidth=0.04, dimen=outer](11.435804,1.4582102){0.12}
\psline[linecolor=black, linewidth=0.04](12.615805,0.13821024)(13.335805,2.4182103)(13.335805,2.4182103)
\psline[linecolor=black, linewidth=0.04](12.2758045,-0.22178976)(11.195805,-2.4617898)
\psline[linecolor=black, linewidth=0.04](12.595804,-0.20178977)(13.355804,-2.4817898)
\rput[bl](14.295805,-0.46178976){=}
\psline[linecolor=black, linewidth=0.04](15.615805,2.3582103)(15.635804,-2.4617898)
\pscircle[linecolor=black, linewidth=0.04, dimen=outer](16.915804,-0.061789762){0.12}
\psline[linecolor=black, linewidth=0.04](16.935804,-0.16178976)(16.935804,-2.4417899)
\rput[bl](15.295805,0.99821025){$S$}
\rput[bl](12.975804,1.0182103){$S$}
\rput[bl](11.2758045,-1.7017897){$S$}
\rput[bl](11.595804,0.43821025){$T$}
\rput[bl](13.235805,-1.7417898){$T$}
\rput[bl](17.055805,-1.7817898){$T$}
\psframe[linecolor=black, linewidth=0.04, dimen=outer](14.835805,0.69821024)(13.995805,-0.14178976)
\psframe[linecolor=black, linewidth=0.04, dimen=outer](4.6958046,0.93821025)(3.8558047,0.09821024)
\rput[bl](4.1758046,0.39821023){$\textbf{1}$}
\rput[bl](4.1758046,0.39821023){$\textbf{1}$}
\rput[bl](14.3158045,0.17821024){$\textbf{2}$}
\end{pspicture}
}
\end{center}

\begin{center}
\psscalebox{0.7 0.7} 
{
\begin{pspicture}(0,-2.265635)(16.74,2.265635)
\pscircle[linecolor=black, linewidth=0.04, dimen=outer](0.74,0.6945491){0.24}
\pscircle[linecolor=black, linewidth=0.04, dimen=outer](10.12,1.474549){0.24}
\pscircle[linecolor=black, linewidth=0.04, dimen=outer](5.28,0.07454907){0.24}
\pscircle[linecolor=black, linewidth=0.04, dimen=outer](6.46,1.0545491){0.24}
\psline[linecolor=black, linewidth=0.04](0.88,0.5145491)(1.48,-0.32545093)
\psline[linecolor=black, linewidth=0.04](0.58,0.5145491)(0.24,-2.245451)
\psline[linecolor=black, linewidth=0.04](2.3,1.9145491)(1.48,-0.34545094)(1.48,-2.245451)
\psline[linecolor=black, linewidth=0.04](4.26,1.9345491)(5.16,0.23454908)
\psline[linecolor=black, linewidth=0.04](6.64,0.8945491)(6.66,-0.8654509)(5.4,-0.10545093)
\psline[linecolor=black, linewidth=0.04](5.4,0.25454906)(6.36,0.85454905)
\psline[linecolor=black, linewidth=0.04](6.66,-0.88545096)(6.66,-2.265451)
\rput[bl](3.2,-0.18545093){$=$}
\psframe[linecolor=black, linewidth=0.04, dimen=outer](3.74,0.83454907)(2.98,0.07454907)
\rput[bl](3.26,0.29454908){$\textbf{3}$}
\rput[bl](6.36,0.9545491){$\sigma$}
\rput[bl](5.18,-0.02545093){$\lambda$}
\rput[bl](0.66,0.59454906){$\sigma$}
\rput[bl](10.0,1.374549){$\lambda$}
\rput[bl](11.44,0.67454904){$\lambda$}
\rput[bl](14.74,-0.04545093){$\lambda$}
\psline[linecolor=black, linewidth=0.04](5.16,-0.12545092)(5.14,-2.265451)
\rput[bl](1.56,-1.605451){$T$}
\rput[bl](0.0,-1.605451){$S$}
\rput[bl](2.18,1.1745491){$T$}
\rput[bl](4.2,1.1545491){$T$}
\rput[bl](0.9,-0.18545093){$T$}
\rput[bl](5.6,0.5145491){$S$}
\rput[bl](4.7,-1.625451){$S$}
\rput[bl](6.76,-1.645451){$T$}
\rput[bl](5.82,-0.78545094){$T$}
\rput[bl](6.76,-0.12545092){$T$}
\pscircle[linecolor=black, linewidth=0.04, dimen=outer](11.54,0.79454905){0.24}
\pscircle[linecolor=black, linewidth=0.04, dimen=outer](10.1,-0.30545092){0.24}
\pscircle[linecolor=black, linewidth=0.04, dimen=outer](15.92,1.2145491){0.24}
\pscircle[linecolor=black, linewidth=0.04, dimen=outer](14.84,0.07454907){0.24}
\psline[linecolor=black, linewidth=0.04](10.28,-0.48545092)(10.98,-1.3454509)(11.68,0.61454904)
\psline[linecolor=black, linewidth=0.04](11.0,-1.365451)(11.0,-2.225451)
\psline[linecolor=black, linewidth=0.04](9.96,1.3145491)(9.92,-0.18545093)
\psline[linecolor=black, linewidth=0.04](10.28,1.3145491)(11.4,0.9545491)
\psline[linecolor=black, linewidth=0.04](9.38,2.234549)(9.94,1.614549)
\psline[linecolor=black, linewidth=0.04](10.28,1.634549)(10.66,2.214549)
\psline[linecolor=black, linewidth=0.04](11.66,0.9545491)(11.66,2.214549)
\psline[linecolor=black, linewidth=0.04](11.38,0.6345491)(10.22,-0.14545093)
\psline[linecolor=black, linewidth=0.04](9.94,-0.46545094)(9.94,-2.245451)
\rput[bl](12.74,-0.12545092){$=$}
\psframe[linecolor=black, linewidth=0.04, dimen=outer](13.28,0.8945491)(12.52,0.13454907)
\rput[bl](12.78,0.39454907){$\textbf{4}$}
\psline[linecolor=black, linewidth=0.04](14.98,-0.12545092)(16.24,-1.525451)(16.1,1.0945491)
\psline[linecolor=black, linewidth=0.04](15.74,1.354549)(15.14,2.254549)
\psline[linecolor=black, linewidth=0.04](16.06,1.374549)(16.4,2.234549)
\psline[linecolor=black, linewidth=0.04](15.78,1.0345491)(14.94,0.23454908)
\psline[linecolor=black, linewidth=0.04](14.68,0.21454906)(14.4,2.214549)
\psline[linecolor=black, linewidth=0.04](14.72,-0.08545093)(14.34,-2.225451)
\psline[linecolor=black, linewidth=0.04](16.26,-1.505451)(16.48,-2.245451)
\rput[bl](15.84,1.0945491){$\nu$}
\rput[bl](9.98,-0.38545093){$\nu$}
\rput[bl](9.28,1.7145491){$T$}
\rput[bl](10.5,1.7145491){$S$}
\rput[bl](10.28,-1.105451){$T$}
\rput[bl](10.76,1.234549){$T$}
\rput[bl](11.42,-0.5454509){$T$}
\rput[bl](11.08,-1.9454509){$T$}
\rput[bl](16.48,-1.9654509){$T$}
\rput[bl](9.62,0.45454907){$S$}
\rput[bl](11.8,1.6945491){$S$}
\rput[bl](10.56,0.25454906){$S$}
\rput[bl](9.54,-1.9054509){$S$}
\rput[bl](14.14,-1.625451){$S$}
\rput[bl](15.1,0.59454906){$S$}
\rput[bl](16.26,1.5945491){$S$}
\rput[bl](15.12,1.614549){$S$}
\rput[bl](15.22,-0.9454509){$T$}
\rput[bl](16.2,-0.10545093){$T$}
\rput[bl](14.1,1.634549){$T$}
\end{pspicture}
}
\end{center}

\begin{center}
\psscalebox{0.7 0.7} 
{
\begin{pspicture}(0,-2.7514224)(17.38,2.7514224)
\pscircle[linecolor=black, linewidth=0.04, dimen=outer](0.74,0.16866995){0.26}
\pscircle[linecolor=black, linewidth=0.04, dimen=outer](1.9,1.4686699){0.26}
\pscircle[linecolor=black, linewidth=0.04, dimen=outer](4.92,1.9486699){0.26}
\pscircle[linecolor=black, linewidth=0.04, dimen=outer](4.76,-0.13133006){0.26}
\pscircle[linecolor=black, linewidth=0.04, dimen=outer](10.16,1.48867){0.26}
\pscircle[linecolor=black, linewidth=0.04, dimen=outer](5.68,0.84866995){0.26}
\psline[linecolor=black, linewidth=0.04](2.26,-1.2913301)(2.26,-2.71133)
\psline[linecolor=black, linewidth=0.04](1.72,1.5886699)(1.32,2.68867)
\psline[linecolor=black, linewidth=0.04](2.08,1.62867)(2.54,2.68867)
\psline[linecolor=black, linewidth=0.04](0.54,0.32866994)(0.26,2.72867)
\psline[linecolor=black, linewidth=0.04](0.96,0.22866994)(1.76,1.3086699)
\psline[linecolor=black, linewidth=0.04](0.58,-0.031330053)(0.26,-2.63133)
\psline[linecolor=black, linewidth=0.04](4.76,2.08867)(4.44,2.72867)
\psline[linecolor=black, linewidth=0.04](5.08,2.12867)(5.38,2.72867)
\psline[linecolor=black, linewidth=0.04](5.54,1.0686699)(5.1,1.74867)
\psline[linecolor=black, linewidth=0.04](5.86,1.02867)(6.1,2.7486699)
\psline[linecolor=black, linewidth=0.04](5.54,0.66866994)(4.9,0.04866995)
\psline[linecolor=black, linewidth=0.04](4.72,1.78867)(4.58,0.028669948)
\psline[linecolor=black, linewidth=0.04](4.58,-0.31133005)(4.22,-2.67133)
\psline[linecolor=black, linewidth=0.04](4.92,-0.35133004)(5.72,-1.27133)(5.84,0.66866994)
\psline[linecolor=black, linewidth=0.04](5.74,-1.2913301)(5.68,-2.63133)
\rput[bl](3.24,-0.29133004){$=$}
\rput[bl](15.84,1.8286699){$\sigma$}
\rput[bl](10.04,1.40867){$\sigma$}
\rput[bl](1.84,1.36867){$\nu$}
\rput[bl](0.66,0.04866995){$\nu$}
\rput[bl](4.82,1.84867){$\nu$}
\rput[bl](4.68,-0.23133005){$\nu$}
\rput[bl](5.58,0.70866996){$\lambda$}
\psframe[linecolor=black, linewidth=0.04, dimen=outer](3.78,0.78866994)(2.98,-0.011330051)
\rput[bl](3.3,0.28866994){$\textbf{5}$}
\pscircle[linecolor=black, linewidth=0.04, dimen=outer](15.94,1.90867){0.26}
\pscircle[linecolor=black, linewidth=0.04, dimen=outer](15.78,-0.17133005){0.26}
\pscircle[linecolor=black, linewidth=0.04, dimen=outer](16.7,0.8086699){0.26}
\psline[linecolor=black, linewidth=0.04](16.56,1.02867)(16.12,1.7086699)
\psline[linecolor=black, linewidth=0.04](16.88,0.98866993)(17.12,2.70867)
\psline[linecolor=black, linewidth=0.04](16.56,0.62867)(15.92,0.008669948)
\psline[linecolor=black, linewidth=0.04](15.74,1.74867)(15.6,-0.011330051)
\psline[linecolor=black, linewidth=0.04](15.6,-0.35133004)(15.24,-2.71133)
\psline[linecolor=black, linewidth=0.04](15.94,-0.39133006)(16.74,-1.3113301)(16.86,0.62867)
\psline[linecolor=black, linewidth=0.04](16.76,-1.3313301)(16.7,-2.67133)
\rput[bl](15.7,-0.27133006){$\nu$}
\rput[bl](16.6,0.66866994){$\lambda$}
\rput[bl](11.22,-0.31133005){$=$}
\psframe[linecolor=black, linewidth=0.04, dimen=outer](11.74,0.78866994)(10.94,-0.011330051)
\psline[linecolor=black, linewidth=0.04](0.86,-0.051330052)(2.28,-1.35133)(2.12,1.3286699)
\pscircle[linecolor=black, linewidth=0.04, dimen=outer](9.0,0.16866995){0.26}
\psline[linecolor=black, linewidth=0.04](10.52,-1.2913301)(10.52,-2.71133)
\psline[linecolor=black, linewidth=0.04](8.8,0.32866994)(8.52,2.72867)
\psline[linecolor=black, linewidth=0.04](9.22,0.22866994)(10.02,1.3086699)
\psline[linecolor=black, linewidth=0.04](8.84,-0.031330053)(8.52,-2.63133)
\rput[bl](8.92,0.04866995){$\nu$}
\psline[linecolor=black, linewidth=0.04](9.12,0.008669948)(10.54,-1.2913301)(10.38,1.38867)
\rput[bl](14.16,-0.31133005){$=$}
\psframe[linecolor=black, linewidth=0.04, dimen=outer](14.68,0.78866994)(13.88,-0.011330051)
\rput[bl](11.22,0.28866994){$\textbf{6}$}
\rput[bl](14.16,0.28866994){$\textbf{7}$}
\psline[linecolor=black, linewidth=0.04](12.52,2.64867)(12.54,-2.7513301)
\pscircle[linecolor=black, linewidth=0.04, dimen=outer](13.12,0.12866995){0.16}
\psline[linecolor=black, linewidth=0.04](13.1,-0.031330053)(13.1,-2.7313302)
\rput[bl](0.0,2.14867){$S$}
\rput[bl](1.3,-0.99133){$T$}
\rput[bl](1.12,0.76866996){$S$}
\rput[bl](0.0,-2.21133){$S$}
\rput[bl](1.18,2.10867){$S$}
\rput[bl](1.92,0.008669948){$T$}
\rput[bl](2.32,-2.2513301){$T$}
\rput[bl](4.94,-0.97133005){$T$}
\rput[bl](5.52,-0.27133006){$T$}
\rput[bl](5.76,-2.29133){$T$}
\rput[bl](5.04,1.14867){$T$}
\rput[bl](10.62,-2.2513301){$T$}
\rput[bl](2.42,2.12867){$S$}
\rput[bl](4.3,2.14867){$S$}
\rput[bl](5.24,2.12867){$S$}
\rput[bl](6.1,2.12867){$S$}
\rput[bl](9.3,0.72866994){$S$}
\rput[bl](8.18,2.10867){$S$}
\rput[bl](3.98,-2.2313302){$S$}
\rput[bl](4.34,0.8086699){$S$}
\rput[bl](12.14,2.10867){$S$}
\rput[bl](15.36,0.72866994){$S$}
\rput[bl](17.14,2.04867){$S$}
\rput[bl](8.2,-2.2513301){$S$}
\rput[bl](9.56,-0.93133){$T$}
\rput[bl](10.14,0.10866995){$T$}
\rput[bl](16.52,-0.37133005){$T$}
\rput[bl](16.06,-1.13133){$T$}
\rput[bl](13.32,-2.27133){$T$}
\rput[bl](16.88,-2.2513301){$T$}
\rput[bl](15.0,-2.29133){$S$}
\end{pspicture}
}
\end{center}

The {\em product} of the wreath $S$ around $T$, or the {\em wreath product}, is the monad consisting of the endomorphism $ST$ on $\CA$ with the multiplication and unit as displayed in the diagram:
\begin{center}
\psscalebox{0.7 0.7} 
{
\begin{pspicture}(0,-2.5421195)(8.0,2.5421195)
\pscircle[linecolor=black, linewidth=0.04, dimen=outer](1.3,1.1981876){0.24}
\pscircle[linecolor=black, linewidth=0.04, dimen=outer](0.48,-0.24181244){0.24}
\pscircle[linecolor=black, linewidth=0.04, dimen=outer](6.3,1.1981876){0.24}
\pscircle[linecolor=black, linewidth=0.04, dimen=outer](7.88,1.1981876){0.12}
\psline[linecolor=black, linewidth=0.04](6.44,0.99818754)(6.94,-0.24181244)(7.9,1.1181875)
\psline[linecolor=black, linewidth=0.04](6.2,0.99818754)(5.66,-2.4818125)
\psline[linecolor=black, linewidth=0.04](6.94,-0.26181245)(6.96,-2.5218124)
\psline[linecolor=black, linewidth=0.04](1.16,1.3381876)(0.92,2.5381875)
\psline[linecolor=black, linewidth=0.04](1.5,1.3381876)(1.84,2.5181875)
\psline[linecolor=black, linewidth=0.04](1.2,0.99818754)(0.64,-0.12181244)
\psline[linecolor=black, linewidth=0.04](0.34,-0.101812445)(0.26,2.4781876)
\psline[linecolor=black, linewidth=0.04](0.36,-0.40181243)(0.42,-2.5218124)
\psline[linecolor=black, linewidth=0.04](0.66,-0.42181244)(1.44,-1.2418125)(1.42,0.99818754)
\psline[linecolor=black, linewidth=0.04](1.44,-1.2218125)(1.42,-2.5418124)
\psline[linecolor=black, linewidth=0.04](2.6,2.4781876)(1.44,-1.2618124)
\rput[bl](0.72,-1.0818125){$T$}
\rput[bl](0.0,1.2581875){$S$}
\rput[bl](1.14,-0.16181244){$T$}
\rput[bl](0.68,1.9381876){$T$}
\rput[bl](2.36,1.2181876){$T$}
\rput[bl](1.54,-2.1218123){$T$}
\rput[bl](0.64,0.37818757){$S$}
\rput[bl](0.08,-2.0818124){$S$}
\rput[bl](1.78,1.9181875){$S$}
\rput[bl](5.4,-2.1018124){$S$}
\rput[bl](6.68,0.49818754){$T$}
\rput[bl](7.68,0.47818756){$T$}
\rput[bl](7.06,-2.1018124){$T$}
\rput[bl](1.2,1.0781876){$\lambda$}
\rput[bl](0.38,-0.32181245){$\nu$}
\rput[bl](6.2,1.0981876){$\sigma$}
\end{pspicture}
}
\end{center}
where the unlabelled nodes are the ternary and binary multiplications of $T$.

A {\em distributive law} \cite{Beck1969} of a monad $T$ over a 
monad $S$ is a special case of a wreath around $T$ consisting of the 
endofunctor $S$ while the natural transformations 
$\nu$ and $\sigma$ of the special form
\begin{eqnarray}\label{distlawnusig}
\nu = \left(SS\stackrel{\mu}\lra S\stackrel{S\eta}\lra ST\right) \ \text{ and } \ \sigma = \left(1_{\CA}\stackrel{\eta \eta}\lra ST\right) \ ,
\end{eqnarray}
and $\lambda$ remains arbitrary.

 \begin{example}\label{exwreathmonoid} 
 {\em We now generalise Example 3.2 of \cite{71} from groups to monoids. 
 We call a monoid morphism $p\dd E\to M$ (in the category $\mathrm{Set}$ of sets)
 a {\em normal cloven lax fibration} when it is equipped with a function $j\dd M\to E$ such that 
 $p\circ j = 1_M$, $j(1)=1$ and, for
 \begin{eqnarray*}
A=\{ a\in E \dd p(a)=1\} = p^{-1}(1) \ ,
\end{eqnarray*}
the function $h\dd M\times A\to E$, defined by $h(x,a)= j(x)a$, is invertible.
This gives, for each $x\in M$, a pullback square.
 \begin{eqnarray}\label{allfibres}
  \begin{aligned}
\xymatrix{
A \ar[rr]^-{h(x,-)} \ar[d]_-{!} && E \ar[d]^-{p} \\
1 \ar[rr]_-{x} && M}
\end{aligned}
\end{eqnarray}  
Generally, the kernel of a monoid morphism is a rather strange thing to consider,
yet, because we have a fibration, all the fibres $p^{-1}(x)$ of $p$ are isomorphic
as sets.
Unlike arbitrary fibres, the kernel has the advantage of being a submonoid of $E$.

We use the pullback \eqref{allfibres} to obtain a function $\alpha \dd A\times M\to A$ of
$M$ on $A$; indeed, $\alpha(a,x)=a\cdot x\in A$ is characterized by the
property
\begin{eqnarray}\label{dot}
j(x)(a\cdot x)=aj(x) \ .
\end{eqnarray}
In other words, this $\alpha$ measures the failure of the kernel to commute with the image of $j$.
Using the pullback uniqueness clause, we see that each $-\cdot x\dd A\to A$ is a monoid
morphism.

We also use the pullback \eqref{allfibres} with $x$ replaced by $xy$ to obtain a function
$\rho\dd M\times M\to A$ characterized by the property
\begin{eqnarray}\label{rho}
j(xy)\rho(x,y)=j(x)j(y) \ .
\end{eqnarray}
In other words, $\rho$ measures the failure of $j$ to be a monoid morphism.
Indeed, for each $x,y\in M$, we have a 2-cell
\begin{equation}\label{rhodiag}
\begin{aligned}
\xymatrix{
A \ar[rd]_{-\cdot (xy)}^(0.5){\phantom{a}}="1" \ar[rr]^{-\cdot x}  && A \ar[ld]^{-\cdot y}_(0.5){\phantom{a}}="2" \ar@{<=}"1";"2"^-{\rho(x,y)}
\\
& A 
}
\end{aligned}
\end{equation}
in the 2-category $\mathrm{Mon}=\mathrm{MonSet}$ of monoids; the ``naturality'' amounts to
the equation
\begin{eqnarray}\label{rho-action}
(a\cdot (xy))\rho(x,y)=\rho(x,y)((a\cdot x)\cdot y)
\end{eqnarray}
which shows that $\rho$ also measures the failure of $\alpha$ to be an action of the monoid $M$ on $A$.
To prove \eqref{rho-action}, it suffices to prove we have equality after applying $h(xy,-)$, which we do thus:
\begin{eqnarray*}
j(xy)(a\cdot (xy))\rho(x,y) & = & aj(xy)\rho(x,y) \\
& = & a j(x)j(y) \\
&=& j(x)(a\cdot x)j(y) \\
&=&j(x)j(y)((a\cdot x)\cdot y) \\
&=& j(xy)\rho(x,y)((a\cdot x)\cdot y) \ .
\end{eqnarray*}

Let $\Sigma M$ denote the category with one object $0$ and hom
$\Sigma M(0,0)=M$; composition is multiplication in $M$.
What we are producing is a normal lax functor
\begin{eqnarray}
P\dd \Sigma M^{\mathrm{op}} \lra \mathrm{Mon}
\end{eqnarray}
with $P0=A$ and $Px=-\cdot x\dd A\to A$.
The composition constraints are given by \eqref{rhodiag}.
Clearly $\rho(1,x)=1=\rho(x,1)$ so all that remains to prove is the
coherence condition \eqref{rhocoher}.

\begin{equation}\label{rhocoher}
\begin{aligned}
  \cd{
    {A}  \ar[rr]^-{-\cdot y}  \dtwocell{dr}{\rho(x,y)} & & {A} \ar[dd]^{-\cdot z} \\
    && \\
    {A} \ar[rr]_-{-\cdot (xyz)} \ar[uurr]^-{} \ar[uu]^{-\cdot x} & & {A} \dtwocell{ull}{\rho(xy,z)} }
    \qquad
        \cd{ = }
    \qquad 
     \cd{
    {A} \ar[ddrr]^-{} \ar[rr]^-{-\cdot y}   & & {A} \ar[dd]^{-\cdot z} \dtwocell{dll}{\rho(y,z)}\\
    && \\
    {A} \dtwocell{ur}{\rho(x,yz)} \ar[rr]_-{-\cdot (xyz)}  \ar[uu]^{-\cdot x} & & {A} 
  }
  \end{aligned}
\end{equation}
This amounts to the Schreier factor set or 2-cocycle condition \eqref{factorset}.
\begin{eqnarray}\label{factorset}
\rho(xy,z)(\rho(x,y)\cdot z)=\rho(x,yz)\rho(y,z)
\end{eqnarray}
To prove this, it suffices to check after left multiplication by $j(xyz)$, which
we do thus:
\begin{eqnarray*}
j(xyz)\rho(xy,z)(\rho(x,y)\cdot z) & = & j(xy)j(z)(\rho(x,y)\cdot z) \\
& = & j(xy)\rho(x,y)j(z) \\
&=& j(x)j(y)j(z) \\
&=&j(x)j(yz)\rho(y,z) \\
&=& j(xyz)\rho(x,yz)\rho(y,z) \ .
\end{eqnarray*}

We are now in a position to transport the multiplication of $E$ to $M\times A$
across the isomorphism $h$.
\begin{equation*}
\xymatrix{
M\times A \ar[rd]_{\mathrm{pr}_1}\ar[rr]^{h}   && E \ar[ld]^{p} \\
& M  &
}
\end{equation*}
The resultant multiplication on $M\times A$ is
\begin{eqnarray}\label{rhoproduct}
(x,a)(y,b)=(xy,\rho(x,y)(a\cdot y)b) \ .
\end{eqnarray}

This gives an equivalence of categories between normal cloven lax fibrations
over any monoid $M$ and normal lax functors 
$P\dd \Sigma M^{\mathrm{op}} \lra \mathrm{Mon}$. 
This is essentially classical and is an interpretation theorem
for the second cohomology of the monoid $M$: 2-cocycles equate
to certain extensions $E\to M$.

Now we give the wreath. The category is $\mathrm{Set}$.
The remaining data all arise from data in $\mathrm{Set}$
by applying the strong monoidal functor 
$\mathrm{Set}\to [\mathrm{Set},\mathrm{Set}]$
which takes $K$ to $K\times -$. 
The monad $T$ arises from the monoid $A$.
The endofunctor $S$ arises from the set $M$.
The natural transformation $\nu$ arises from the function 
$M\times M\to M\times A, (x,y)\mapsto (xy,\rho(x,y))$.
The natural transformation $\lambda$ arises from the function 
$A\times M\to M\times A, (a,x)\mapsto (x,\alpha(a,x))$.
The natural transformation $\sigma$ arises from the function 
$\mathbf{1}\to M\times A$ which picks out $(1,1)$.

The wreath product of course arises from the monoid $M\times A$ 
with product \eqref{rhoproduct} and so recaptures $E$ up to isomorphism.}
 \end{example} 

\begin{remark}
{\em 
\begin{enumerate}
\item Here is the string diagram for \eqref{rho-action}.
\begin{center}
\begin{equation}\label{rhonaturality}
\begin{aligned}
\psscalebox{0.55 0.55} 
{
\begin{pspicture}(0,-3.6962144)(19.88,3.6962144)
\rput[bl](3.44,-0.395969){$\alpha$}
\pscircle[linecolor=black, linewidth=0.04, dimen=outer](3.52,-0.215969){0.54}
\psline[linecolor=black, linewidth=0.04](0.4,3.684031)(3.12,0.144031)
\psline[linecolor=black, linewidth=0.04](3.6,3.644031)(3.56,0.26403102)
\psline[linecolor=black, linewidth=0.04](5.74,3.624031)(3.58,0.924031)
\psline[linecolor=black, linewidth=0.04](3.6,2.684031)(4.42,2.164031)(4.42,2.164031)
\pscircle[linecolor=black, linewidth=0.04, dimen=outer](6.14,1.204031){0.5}
\rput[bl](6.04,1.064031){$\rho$}
\psline[linecolor=black, linewidth=0.04](4.68,2.044031)(5.84,1.564031)
\psline[linecolor=black, linewidth=0.04](5.14,2.884031)(6.34,1.644031)
\psline[linecolor=black, linewidth=0.04](3.54,-0.755969)(4.54,-1.935969)
\psline[linecolor=black, linewidth=0.04](6.14,0.72403103)(4.56,-1.935969)(4.58,-3.695969)
\rput[bl](2.9,3.144031){$M$}
\rput[bl](0.0,3.164031){$A$}
\rput[bl](5.64,3.124031){$M$}
\rput[bl](8.28,0.664031){$\Huge{=}$}
\pscircle[linecolor=black, linewidth=0.04, dimen=outer](12.72,1.144031){0.5}
\pscircle[linecolor=black, linewidth=0.04, dimen=outer](16.66,1.1240311){0.5}
\pscircle[linecolor=black, linewidth=0.04, dimen=outer](18.38,-0.23596899){0.5}
\psline[linecolor=black, linewidth=0.04](16.7,0.664031)(18.04,0.10403101)
\rput[bl](16.62,0.984031){$\alpha$}
\rput[bl](18.32,-0.375969){$\alpha$}
\rput[bl](12.68,1.0040311){$\rho$}
\psline[linecolor=black, linewidth=0.04](18.98,3.604031)(18.7,0.10403101)
\psline[linecolor=black, linewidth=0.04](18.94,2.864031)(12.96,1.544031)(13.02,1.5040311)
\psline[linecolor=black, linewidth=0.04](15.22,3.604031)(15.22,3.004031)(12.5,1.584031)
\psline[linecolor=black, linewidth=0.04](15.22,2.984031)(16.22,2.464031)(16.24,2.464031)
\psline[linecolor=black, linewidth=0.04](16.54,2.224031)(16.98,1.484031)
\psline[linecolor=black, linewidth=0.04](12.14,3.624031)(14.06,2.584031)(14.06,2.604031)
\psline[linecolor=black, linewidth=0.04](14.34,2.444031)(14.9,2.144031)
\psline[linecolor=black, linewidth=0.04](15.32,1.904031)(16.36,1.524031)
\psline[linecolor=black, linewidth=0.04](12.72,0.684031)(15.46,-1.535969)
\psline[linecolor=black, linewidth=0.04](18.36,-0.735969)(15.48,-1.575969)(15.48,-3.675969)
\rput[bl](4.74,-3.395969){$A$}
\rput[bl](15.66,-3.515969){$A$}
\rput[bl](11.6,3.164031){$A$}
\rput[bl](15.32,3.204031){$M$}
\rput[bl](19.12,3.204031){$M$}
\end{pspicture}
}
\end{aligned}
\end{equation}
\end{center}
\item Here is the string diagram for \eqref{factorset}.
\begin{center}
\begin{equation}
\begin{aligned}
\psscalebox{0.55 0.55} 
{
\begin{pspicture}(0,-3.377788)(16.196299,3.377788)
\pscircle[linecolor=black, linewidth=0.04, dimen=outer](1.7762985,-0.037788164){0.42}
\pscircle[linecolor=black, linewidth=0.04, dimen=outer](4.3562984,1.1622119){0.42}
\pscircle[linecolor=black, linewidth=0.04, dimen=outer](5.2362986,-0.5177882){0.42}
\psline[linecolor=black, linewidth=0.04](5.7962985,3.362212)(1.9962986,0.30221185)
\psline[linecolor=black, linewidth=0.04](2.9362986,3.3222117)(1.5162985,0.24221183)
\psline[linecolor=black, linewidth=0.04](0.016298523,3.3022118)(1.7962985,0.8022118)
\psline[linecolor=black, linewidth=0.04](2.6562986,2.642212)(3.8762984,1.9822118)
\psline[linecolor=black, linewidth=0.04](4.1362987,1.8822118)(4.6562986,1.4422119)
\psline[linecolor=black, linewidth=0.04](0.63629854,2.422212)(2.1162984,2.0022118)
\psline[linecolor=black, linewidth=0.04](2.4362986,1.9222119)(3.4762986,1.6822119)
\psline[linecolor=black, linewidth=0.04](3.8162985,1.6022118)(4.1362987,1.5022118)
\psline[linecolor=black, linewidth=0.04](4.5562987,0.8022118)(5.1162987,-0.15778816)
\psline[linecolor=black, linewidth=0.04](5.2762985,2.922212)(5.4162984,-0.17778817)
\psline[linecolor=black, linewidth=0.04](1.8162985,-0.45778817)(3.0762985,-1.7377882)(3.0762985,-3.357788)
\psline[linecolor=black, linewidth=0.04](3.0962985,-1.7577882)(5.1962986,-0.89778817)
\rput[bl](14.176298,0.82221186){$\rho$}
\rput[bl](10.976298,-0.17778817){$\rho$}
\rput[bl](4.1962986,1.0622119){$\rho$}
\rput[bl](1.6362985,-0.15778816){$\rho$}
\rput[bl](5.1362987,-0.6977882){$\alpha$}
\rput[bl](7.6762986,0.16221184){$\Huge{=}$}
\pscircle[linecolor=black, linewidth=0.04, dimen=outer](11.116299,-0.017788162){0.4}
\pscircle[linecolor=black, linewidth=0.04, dimen=outer](14.376299,1.0022118){0.4}
\psline[linecolor=black, linewidth=0.04](9.896298,3.3222117)(10.916299,0.26221183)
\psline[linecolor=black, linewidth=0.04](15.436298,3.3422117)(14.616299,1.2622118)
\psline[linecolor=black, linewidth=0.04](15.216298,2.7022119)(11.296299,0.30221185)
\psline[linecolor=black, linewidth=0.04](12.416299,3.3422117)(12.336299,0.9222118)
\psline[linecolor=black, linewidth=0.04](12.436298,2.662212)(13.556298,1.8222119)(13.536299,1.8222119)
\psline[linecolor=black, linewidth=0.04](13.716298,1.6822119)(14.156299,1.3022119)
\psline[linecolor=black, linewidth=0.04](11.196299,-0.39778817)(13.056298,-1.7177882)
\psline[linecolor=black, linewidth=0.04](14.416299,0.60221183)(13.036299,-1.7177882)(13.036299,-3.377788)
\rput[bl](0.3162985,2.882212){$M$}
\rput[bl](3.2162986,-3.1777883){$A$}
\rput[bl](2.8962984,2.882212){$M$}
\rput[bl](5.6562986,2.862212){$M$}
\rput[bl](10.096298,2.9822118){$M$}
\rput[bl](15.436298,2.9822118){$M$}
\rput[bl](12.516298,3.0022118){$M$}
\rput[bl](13.156299,-3.2177882){$A$}
\end{pspicture}
}
\end{aligned}
\end{equation}
\end{center}
\item
The structure on $p\dd E\to M$ of {\em normal cloven (strict) fibration} consists of
 a function $j\dd M\to E$ such that $p\circ j = 1_M$, $j(1)=1$ and the square
 \begin{eqnarray*}
\xymatrix{
M\times E \ar[rr]^-{\mu\circ (j\times 1_E)} \ar[d]_-{1_M\times p} && E \ar[d]^-{p} \\
M\times M \ar[rr]_-{\mu} && M}
\end{eqnarray*}
is a pullback.
Notice that we have the condition $M\times A\cong E$ for a lax fibration since we can paste two pullback squares as follows:
\begin{eqnarray*}
\xymatrix{
M\times A \ar[rr]^-{1_M \times \mathrm{incl} } \ar[d]_-{\mathrm{pr}_1}&& M\times E \ar[rr]^-{\mu\circ (j\times 1_E)} \ar[d]_-{1_M\times p} && E \ar[d]^-{p} \\
M \ar[rr]_-{(1_M,1)} \ar@/_2pc/[rrrr]_{1_M} && M\times M \ar[rr]_-{\mu} && M \ .}
\end{eqnarray*}
\item Similarly to Example~\ref{exwreathmonoid}, Example 3.3 of \cite{71} can be generalised from Hopf algebras $H$ to bimonoids $M$ in a braided monoidal category.
Moreover, there is no need for the convolution invertibility of $\rho$: however, the
one axiom required for $A$ to be a twisted $M$-module, which is stated in \cite{71} in terms
of that inverse of $\rho$, should be replaced by the naturality condition \eqref{rhonaturality}.
We will discuss a dual of this in Section~\ref{sectiontwcoact}. 
\end{enumerate}}
\end{remark}

\section{Mixed wreaths}

There are several possibilities for mixed wreaths just as for mixed distributive laws; compare \cite{PoWa2002}.

We will use the notation 
\begin{eqnarray*}
\mathrm{EM}^{\mathrm{du}}(\mathfrak{K})=\mathrm{EM}(\mathfrak{K}^{\mathrm{du}})^{\mathrm{du}}
\end{eqnarray*}
for any of the dualities $\mathrm{du}\in \{\mathrm{op}, \mathrm{co}, \mathrm{coop}\}$ (in the notation of \cite{7}).
We also put 
\begin{eqnarray*}
\mathrm{KL}(\mathfrak{K}) = \mathrm{EM}^{\mathrm{op}}(\mathfrak{K})
\end{eqnarray*}
since it is the cocompletion of $\mathfrak{K}$ with respect to the Kleisli construction.
This then leads to 
\begin{eqnarray*}
\mathrm{KL}^{\mathrm{co}}(\mathfrak{K}) = \mathrm{EM}^{\mathrm{coop}}(\mathfrak{K}) \ .
\end{eqnarray*}

\begin{definition}
{\em Let $T$ be a monad on $\CA$ in the 2-category $\mathfrak{K}$.
A {\em mixed wreath around the monad} $T$ is a comonad on $(\CA,T)$ 
in the 2-category $\mathrm{EM}(\mathfrak{K})$.}  
\end{definition}

More explicitly, a mixed wreath structure around $T$ on an endomorphism $G$ of $\CA$ consists of
2-cells $\delta \dd G\Rightarrow GGT$, $\varepsilon \dd G\Rightarrow T$ and $\xi \dd TG \Rightarrow GT$
satisfying four axioms which say that $(G,\xi)\dd (\CA,T)\to (\CA,T)$ is a morphism, and 
$\delta \dd (G,\xi)\Rightarrow (G,\xi)(G,\xi)$ and $\varepsilon \dd 1\Rightarrow (G,\xi)$ are 2-cells, in  
$\mathrm{EM}(\mathfrak{K})$, and three axioms which say $\delta$ is coassociative with counit $\varepsilon$. 

Suppose $\mathfrak{K}$ admits the Eilenberg-Moore construction for both monads and comonads.
Simply because $\mathrm{em}\dd \mathrm{EM}(\mathfrak{K}) \lra \mathfrak{K}$ 
is a 2-functor, each mixed wreath $(G,\xi)\dd (\CA,T)\to (\CA,T)$ yields a comonad $G^{\xi}= \mathrm{em}(G,\xi)$ 
on $\CA^T$ in $\mathfrak{K}$. 
Define 
\begin{eqnarray*}
\mathrm{mem}(G,\xi,T) = (\CA^T)^{G^{\xi}} \ ,
\end{eqnarray*}
the Eilenberg-Moore construction for the comonad $G_{\xi}$.
This gives the object function for a {\em mixed Eilenberg-Moore construction}
\begin{eqnarray}\label{memfunctor}
\mathrm{mem} \dd \mathrm{EM}^{\mathrm{co}}(\mathrm{EM}(\KK))\stackrel{\mathrm{EM}^{\mathrm{co}}(\mathrm{em})}\lra \mathrm{EM}^{\mathrm{co}}(\KK)\stackrel{\mathrm{em}^{\mathrm{co}}}\lra \KK
\end{eqnarray}
with an obvious left adjoint.

We have the following description of $\mathrm{mem}(G,\xi,T)$
when $\KK= \mathrm{Cat}$. The objects $(A,a,c)$
of the category consist of an Eilenberg-Moore $T$-algebra
$a\dd TA\to A$ and a morphism $c\dd A\to GA$
satisfying the following three conditions.
\begin{eqnarray*}
\begin{aligned}
\xymatrix{
TA \ar[r]^-{a} \ar[d]_-{Tc} & A \ar[dd]^-{c} \\
TGA \ar[d]_-{\xi_A} & \\
GTA\ar[r]_-{Ga} & GA}
\qquad
\xymatrix{
A \ar[r]^-{c} \ar[d]_-{c} & GA \ar[dd]^-{Gc} \\
GA \ar[d]_-{\delta_A} & \\
GGTA\ar[r]_-{GGa} & GGA}
\qquad
\xymatrix{
A \ar[rdd]^-{1} \ar[d]_-{c} &  \\
GA \ar[d]_-{\varepsilon_A} & \\
GTA\ar[r]_-{a} & A}
\end{aligned}
\end{eqnarray*}

To reinforce the limit nature of the $\mathrm{mem}(G,\xi,T)$ construction
we next record its characterization as a representing object.
This can be taken as the definition when $\KK$ lacks the Eilenberg-Moore 
construction for monads or comonads in general.  

\begin{proposition}\label{upemgt}
$\KK(\CX,\mathrm{mem}(G,\xi,T))\cong \mathrm{mem}(\KK(\CX,G),\KK(\CX,\xi),\KK(\CX,T))$
\end{proposition}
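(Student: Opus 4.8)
The plan is to reduce everything to the universal property that \emph{defines} the Eilenberg--Moore construction in a 2-category, together with the fact that representable 2-functors $\KK(\CX,-)$ preserve it. Recall that the Eilenberg--Moore object $\CA^T$ of a monad $T$ is characterised by the pseudonatural isomorphism
\[
\KK(\CX,\CA^T)\;\cong\;\KK(\CX,\CA)^{\KK(\CX,T)},
\]
the right-hand side being the ordinary category of algebras for the induced monad $\KK(\CX,T)$ on $\KK(\CX,\CA)$; this is precisely the adjunction $\mathrm{em}\dashv\mathrm{id}$ read through $\KK(\CX,-)$. Dualising under $\mathrm{co}$, a comonad $H$ on an object $\CB$ satisfies $\KK(\CX,\CB^H)\cong\KK(\CX,\CB)^{\KK(\CX,H)}$, now the category of $\KK(\CX,H)$-coalgebras.

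First I would unfold $\mathrm{mem}(G,\xi,T)=(\CA^T)^{G^\xi}$ one layer at a time. Applying the comonadic universal property to the comonad $G^\xi$ on $\CA^T$ gives
\[
\KK\bigl(\CX,(\CA^T)^{G^\xi}\bigr)\;\cong\;\bigl(\KK(\CX,\CA^T)\bigr)^{\KK(\CX,G^\xi)},
\]
and applying the monadic universal property to $T$ rewrites the base as $\KK(\CX,\CA^T)\cong\KK(\CX,\CA)^{\KK(\CX,T)}$. Hence the left-hand side of the proposition is an iterated Eilenberg--Moore category in $\mathrm{Cat}$: the coalgebras, over the category of $\KK(\CX,T)$-algebras, for whatever comonad $\KK(\CX,G^\xi)$ becomes under this last identification.

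The key step is to identify that comonad. I claim that, transported across $\KK(\CX,\CA^T)\cong\KK(\CX,\CA)^{\KK(\CX,T)}$, the comonad $\KK(\CX,G^\xi)$ is exactly $\bigl(\KK(\CX,G)\bigr)^{\KK(\CX,\xi)}$, the comonad produced by the $\mathrm{Cat}$-level construction applied to the mixed wreath $(\KK(\CX,G),\KK(\CX,\xi),\KK(\CX,T))$. This holds because $\mathrm{em}$ is itself defined through the universal property that $\KK(\CX,-)$ preserves, so $\KK(\CX,-)$ commutes with $\mathrm{em}$ not merely on objects but on the morphism $(G,\xi)$ and on the comonad-structure 2-cells $\delta$ and $\varepsilon$ of $\mathrm{EM}(\KK)$; thus $G^\xi=\mathrm{em}(G,\xi)$ is sent to $\mathrm{em}(\KK(\CX,G),\KK(\CX,\xi))=\bigl(\KK(\CX,G)\bigr)^{\KK(\CX,\xi)}$, with comultiplication and counit the images of $\delta$ and $\varepsilon$.

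Granting the identification, both sides are the same iterated Eilenberg--Moore category, and I would finish by matching the explicit descriptions: an object is a triple $(A,a,c)$ consisting of a $\KK(\CX,T)$-algebra $a$ and a coalgebra structure $c$ for the induced comonad, and the three coherence squares recorded just before the statement are precisely the conditions cutting out $\mathrm{mem}(\KK(\CX,G),\KK(\CX,\xi),\KK(\CX,T))$. Naturality in $\CX$ is inherited from each universal property used. I expect the main obstacle to be exactly the comonad-identification step: one must verify that representables commute with $\mathrm{em}$ on morphisms and on the structural 2-cells, so that the comultiplications and counits of the two comonads genuinely coincide; once this is in hand, the rest is a formal consequence of limit-preservation by $\KK(\CX,-)$.
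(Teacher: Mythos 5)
Your proof is correct and is essentially the argument the paper intends: the proposition is recorded there without proof, as an immediate formal consequence of $\mathrm{mem}$ being the composite $\mathrm{em}^{\mathrm{co}}\circ\mathrm{EM}^{\mathrm{co}}(\mathrm{em})$ of two representably characterised Eilenberg--Moore constructions, which is exactly your two-step unfolding together with the identification of $\KK(\CX,G^{\xi})$ with $\bigl(\KK(\CX,G)\bigr)^{\KK(\CX,\xi)}$. Your flagged ``key step'' is indeed the only point requiring care, and your justification of it (representables preserve $\mathrm{em}$ on morphisms and on the structural 2-cells $\delta$, $\varepsilon$) is the right one.
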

  
\begin{definition}\label{defmopwr}
{\em Let $T$ be a monoid on $\CA$ in the 2-category $\mathfrak{K}$.
A {\em mixed opwreath around the monad} $T$ is a comonad on $(\CA,T)$ 
in the 2-category $\mathrm{KL}(\mathfrak{K})$. This consists of an endomorphism
$G$ of $\CA$ made into a morphism of $\mathrm{KL}(\mathfrak{K})$ by a
2-cell $\zeta\dd GT\Rightarrow TG$ and into a comonad by comultiplication
$\delta \dd G\Rightarrow TGG$ and counit $\varepsilon \dd G\Rightarrow T$.
The seven axioms are shown below in string form.}  
\end{definition}
\begin{center}
\psscalebox{0.6 0.6} 
{
\begin{pspicture}(0,-2.6173604)(20.13,2.6173604)
\pscircle[linecolor=black, linewidth=0.04, dimen=outer](1.9,-1.1772718){0.36}
\pscircle[linecolor=black, linewidth=0.04, dimen=outer](8.94,-0.11727184){0.36}
\pscircle[linecolor=black, linewidth=0.04, dimen=outer](6.92,1.5627282){0.36}
\pscircle[linecolor=black, linewidth=0.04, dimen=outer](14.92,-0.17727184){0.36}
\rput[bl](1.78,-1.3372718){$\zeta$}
\rput[bl](8.84,-0.29727185){$\zeta$}
\rput[bl](6.8,1.3827281){$\zeta$}
\rput[bl](14.84,-0.35727185){$\zeta$}
\psline[linecolor=black, linewidth=0.04](0.26,2.5827281)(1.7,-0.87727183)
\psline[linecolor=black, linewidth=0.04](2.18,2.5627282)(2.78,1.1027281)(3.58,2.5627282)
\psline[linecolor=black, linewidth=0.04](2.78,1.1027281)(2.12,-0.93727183)
\psline[linecolor=black, linewidth=0.04](1.68,-1.4572718)(0.24,-2.597272)(0.24,-2.597272)
\psline[linecolor=black, linewidth=0.04](2.1,-1.4572718)(3.56,-2.577272)
\psline[linecolor=black, linewidth=0.04](6.04,2.5627282)(6.76,1.8427281)
\psline[linecolor=black, linewidth=0.04](7.98,2.6027281)(7.12,1.8027282)
\psline[linecolor=black, linewidth=0.04](7.16,1.3227282)(8.7,0.10272816)
\psline[linecolor=black, linewidth=0.04](6.74,1.2427281)(6.04,-2.577272)
\psline[linecolor=black, linewidth=0.04](9.18,-0.37727183)(9.62,-2.5372717)
\psline[linecolor=black, linewidth=0.04](8.72,-0.39727184)(6.24,-1.3772719)
\rput[bl](4.46,0.06272816){$=$}
\rput[bl](17.38,0.002728157){$=$}
\psframe[linecolor=black, linewidth=0.04, dimen=outer](4.94,1.0227282)(4.22,0.30272815)
\psframe[linecolor=black, linewidth=0.04, dimen=outer](17.88,0.96272814)(17.16,0.24272816)
\rput[bl](4.48,0.5427282){$\mathbf{1}$}
\rput[bl](17.4,0.48272815){$\mathbf{2}$}
\psline[linecolor=black, linewidth=0.04](13.28,2.5827281)(14.72,0.082728155)
\psline[linecolor=black, linewidth=0.04](14.74,-0.47727185)(13.22,-2.577272)
\psline[linecolor=black, linewidth=0.04](15.04,-0.47727185)(16.42,-2.557272)
\psline[linecolor=black, linewidth=0.04](15.1,0.10272816)(15.66,1.5627282)
\pscircle[linecolor=black, linewidth=0.04, dimen=outer](15.72,1.6827282){0.16}
\psline[linecolor=black, linewidth=0.04](18.88,-2.577272)(18.84,0.9227282)
\pscircle[linecolor=black, linewidth=0.04, dimen=outer](18.84,1.0627282){0.16}
\psline[linecolor=black, linewidth=0.04](19.62,2.5627282)(19.64,-2.597272)
\rput[bl](0.0,2.142728){$G$}
\rput[bl](1.82,2.122728){$T$}
\rput[bl](5.86,2.142728){$G$}
\rput[bl](7.88,0.78272814){$G$}
\rput[bl](12.98,2.162728){$G$}
\rput[bl](16.34,-2.2572718){$G$}
\rput[bl](19.86,-2.2572718){$G$}
\rput[bl](9.64,-2.2372718){$G$}
\rput[bl](3.28,-2.2572718){$G$}
\rput[bl](3.56,2.0827281){$T$}
\rput[bl](2.54,-0.11727184){$T$}
\rput[bl](6.16,-0.097271845){$T$}
\rput[bl](15.52,0.70272815){$T$}
\rput[bl](13.18,-2.2772717){$T$}
\rput[bl](7.54,-1.2172718){$T$}
\rput[bl](5.8,-2.317272){$T$}
\rput[bl](0.28,-2.2772717){$T$}
\rput[bl](7.84,2.1027281){$T$}
\rput[bl](9.74,2.1027281){$T$}
\psline[linecolor=black, linewidth=0.04](9.1,0.16272816)(9.7,2.5827281)
\rput[bl](18.52,-2.2572718){$T$}
\end{pspicture}
}
\bigskip

\psscalebox{0.6 0.6} 
{
\begin{pspicture}(0,-2.7873774)(19.53,2.7873774)
\pscircle[linecolor=black, linewidth=0.04, dimen=outer](4.34,1.7127452){0.36}
\pscircle[linecolor=black, linewidth=0.04, dimen=outer](15.98,1.9527452){0.36}
\pscircle[linecolor=black, linewidth=0.04, dimen=outer](0.4,1.0727452){0.36}
\pscircle[linecolor=black, linewidth=0.04, dimen=outer](10.58,-0.60725474){0.36}
\rput[bl](4.22,1.5527452){$\zeta$}
\rput[bl](15.84,1.7727453){$\zeta$}
\rput[bl](11.92,0.35274523){$\zeta$}
\rput[bl](10.48,-0.78725475){$\zeta$}
\psline[linecolor=black, linewidth=0.04](0.4,0.7527453)(0.38,-2.7872548)
\psline[linecolor=black, linewidth=0.04](0.4,2.7527452)(0.38,1.3927453)
\psline[linecolor=black, linewidth=0.04](3.46,2.7327452)(4.18,2.0127451)
\psline[linecolor=black, linewidth=0.04](5.4,2.7727451)(4.54,1.9727453)
\psline[linecolor=black, linewidth=0.04](4.58,1.4927453)(6.12,0.27274525)
\psline[linecolor=black, linewidth=0.04](1.66,2.7527452)(0.38,-1.9272548)
\psline[linecolor=black, linewidth=0.04](6.14,-0.22725475)(4.22,-1.0472548)
\rput[bl](2.36,0.092745245){$=$}
\rput[bl](13.62,0.13274525){$=$}
\psframe[linecolor=black, linewidth=0.04, dimen=outer](2.84,1.0527452)(2.12,0.33274525)
\psframe[linecolor=black, linewidth=0.04, dimen=outer](14.12,1.0927453)(13.4,0.37274525)
\psline[linecolor=black, linewidth=0.04](12.16,2.7527452)(12.2,0.77274525)
\psline[linecolor=black, linewidth=0.04](10.56,1.6527452)(10.42,-0.34725475)
\psline[linecolor=black, linewidth=0.04](12.22,0.19274525)(12.24,-2.6872547)
\psline[linecolor=black, linewidth=0.04](10.76,-0.34725475)(11.86,0.17274524)
\psline[linecolor=black, linewidth=0.04](17.98,-2.6072547)(17.98,-0.40725476)
\rput[bl](0.0,2.4527452){$G$}
\rput[bl](3.28,2.3127453){$G$}
\rput[bl](5.3,0.95274526){$G$}
\rput[bl](10.2,2.4327452){$G$}
\rput[bl](11.38,1.3327452){$G$}
\rput[bl](10.56,0.57274526){$G$}
\rput[bl](12.32,-2.1272547){$G$}
\rput[bl](0.06,-2.6672547){$T$}
\rput[bl](0.04,-0.38725474){$T$}
\rput[bl](12.28,2.4127452){$T$}
\rput[bl](11.36,-0.38725474){$T$}
\rput[bl](4.96,-1.0472548){$T$}
\rput[bl](3.44,-2.6672547){$T$}
\rput[bl](1.72,2.4127452){$T$}
\rput[bl](5.26,2.2727451){$T$}
\rput[bl](3.78,0.19274525){$T$}
\rput[bl](14.54,-2.4672546){$T$}
\pscircle[linecolor=black, linewidth=0.04, dimen=outer](6.38,0.012745247){0.36}
\pscircle[linecolor=black, linewidth=0.04, dimen=outer](17.94,-0.087254755){0.36}
\pscircle[linecolor=black, linewidth=0.04, dimen=outer](10.58,1.9527452){0.36}
\pscircle[linecolor=black, linewidth=0.04, dimen=outer](12.0,0.47274524){0.36}
\rput[bl](0.36,0.9927452){$\varepsilon$}
\rput[bl](6.26,-0.06725475){$\varepsilon$}
\rput[bl](2.4,0.59274524){$\mathbf{3}$}
\rput[bl](13.66,0.6127452){$\mathbf{4}$}
\psline[linecolor=black, linewidth=0.04](4.178868,1.4327452)(4.24,-1.071101)(3.7,-2.7672548)
\rput[bl](10.48,1.7927452){$\delta$}
\rput[bl](17.86,-0.20725475){$\delta$}
\psline[linecolor=black, linewidth=0.04](10.58,2.2927454)(10.58,2.7727451)
\psline[linecolor=black, linewidth=0.04](10.9,1.7927452)(11.84,0.73274523)
\psline[linecolor=black, linewidth=0.04](9.2,-1.7472547)(10.34,-0.86725473)
\psline[linecolor=black, linewidth=0.04](10.76,-0.8872548)(11.16,-2.6672547)
\psline[linecolor=black, linewidth=0.04](10.28,1.7927452)(8.88,-2.6672547)
\rput[bl](9.86,-1.5472548){$T$}
\rput[bl](9.44,-0.0072547533){$T$}
\rput[bl](8.64,-2.6272547){$T$}
\psline[linecolor=black, linewidth=0.04](14.96,2.7527452)(15.76,2.1927452)
\psline[linecolor=black, linewidth=0.04](17.18,2.7127452)(16.2,2.2127452)
\psline[linecolor=black, linewidth=0.04](16.24,1.7527453)(17.96,0.23274525)
\psline[linecolor=black, linewidth=0.04](15.76,1.6927452)(14.78,-2.6472547)
\psline[linecolor=black, linewidth=0.04](17.68,-0.28725475)(15.02,-1.6472547)
\psline[linecolor=black, linewidth=0.04](18.22,-0.28725475)(19.22,-2.6072547)
\rput[bl](17.16,0.97274524){$G$}
\rput[bl](19.26,-2.5072548){$G$}
\rput[bl](10.66,-2.6472547){$G$}
\rput[bl](17.54,-2.4672546){$G$}
\rput[bl](14.78,2.4127452){$G$}
\rput[bl](17.2,2.4127452){$T$}
\rput[bl](15.04,0.052745245){$T$}
\rput[bl](16.46,-1.2672547){$T$}
\end{pspicture}
}

\bigskip

\psscalebox{0.6 0.6} 
{
\begin{pspicture}(0,-2.923827)(20.89881,2.923827)
\pscircle[linecolor=black, linewidth=0.04, dimen=outer](1.6988101,1.403827){0.36}
\pscircle[linecolor=black, linewidth=0.04, dimen=outer](20.53881,0.503827){0.36}
\rput[bl](19.09881,-0.73617303){$\zeta$}
\rput[bl](6.39881,-1.136173){$\zeta$}
\psline[linecolor=black, linewidth=0.04](15.45881,0.683827)(15.43881,-2.856173)
\psline[linecolor=black, linewidth=0.04](1.6988101,1.063827)(1.6788101,-0.616173)
\psline[linecolor=black, linewidth=0.04](1.9188101,-1.236173)(3.23881,-2.876173)
\psline[linecolor=black, linewidth=0.04](1.9188101,1.1838269)(3.93881,-2.876173)
\rput[bl](17.19881,-0.476173){$=$}
\rput[bl](4.21881,-0.476173){$=$}
\psframe[linecolor=black, linewidth=0.04, dimen=outer](17.65881,0.52382696)(16.93881,-0.19617301)
\psline[linecolor=black, linewidth=0.04](1.6788101,-1.296173)(1.7188101,-2.876173)
\psline[linecolor=black, linewidth=0.04](16.15881,2.883827)(16.15881,-2.856173)
\rput[bl](1.3388101,2.583827){$G$}
\rput[bl](6.19881,2.603827){$G$}
\rput[bl](11.79881,2.563827){$G$}
\rput[bl](1.8188101,-2.716173){$G$}
\rput[bl](2.5788102,-2.736173){$G$}
\rput[bl](3.4188101,-2.736173){$G$}
\rput[bl](19.17881,0.34382698){$G$}
\rput[bl](15.11881,-2.696173){$T$}
\rput[bl](0.9188101,-1.716173){$T$}
\rput[bl](4.9788103,-2.716173){$T$}
\rput[bl](10.698811,-2.736173){$T$}
\rput[bl](11.51881,-1.756173){$T$}
\rput[bl](0.6788101,-0.21617302){$T$}
\rput[bl](0.23881012,-2.676173){$T$}
\rput[bl](11.07881,-0.09617302){$T$}
\rput[bl](18.57881,-1.516173){$T$}
\pscircle[linecolor=black, linewidth=0.04, dimen=outer](19.19881,1.783827){0.36}
\pscircle[linecolor=black, linewidth=0.04, dimen=outer](12.21881,1.763827){0.36}
\pscircle[linecolor=black, linewidth=0.04, dimen=outer](12.17881,-0.756173){0.36}
\rput[bl](12.13881,-0.85617304){$\varepsilon$}
\rput[bl](1.6188102,1.283827){$\delta$}
\rput[bl](18.13881,-0.05617302){$T$}
\rput[bl](17.89881,-2.716173){$T$}
\rput[bl](19.95881,-0.416173){$T$}
\rput[bl](19.91881,-2.696173){$G$}
\rput[bl](11.83881,0.32382697){$G$}
\rput[bl](20.03881,1.243827){$G$}
\rput[bl](13.09881,-2.696173){$G$}
\rput[bl](16.23881,-2.696173){$G$}
\rput[bl](5.5188103,-0.21617302){$T$}
\rput[bl](5.69881,-1.836173){$T$}
\rput[bl](7.25881,-0.696173){$T$}
\pscircle[linecolor=black, linewidth=0.04, dimen=outer](1.7188101,-0.956173){0.36}
\pscircle[linecolor=black, linewidth=0.04, dimen=outer](6.55881,1.343827){0.36}
\pscircle[linecolor=black, linewidth=0.04, dimen=outer](8.15881,0.20382698){0.36}
\pscircle[linecolor=black, linewidth=0.04, dimen=outer](6.5188103,-0.97617304){0.36}
\psline[linecolor=black, linewidth=0.04](1.7188101,2.903827)(1.6988101,1.743827)
\psline[linecolor=black, linewidth=0.04](6.57881,2.863827)(6.55881,1.703827)
\psline[linecolor=black, linewidth=0.04](1.4588101,1.163827)(0.01881012,-2.816173)
\psline[linecolor=black, linewidth=0.04](1.4388101,-1.176173)(0.45881012,-1.556173)
\psframe[linecolor=black, linewidth=0.04, dimen=outer](4.71881,0.543827)(3.99881,-0.17617302)
\psline[linecolor=black, linewidth=0.04](6.33881,1.083827)(4.67881,-2.916173)
\psline[linecolor=black, linewidth=0.04](6.27881,-1.236173)(5.15881,-1.776173)
\psline[linecolor=black, linewidth=0.04](6.57881,1.003827)(6.35881,-0.716173)
\psline[linecolor=black, linewidth=0.04](7.85881,0.06382698)(6.71881,-0.716173)
\psline[linecolor=black, linewidth=0.04](6.79881,1.123827)(8.13881,0.563827)
\psline[linecolor=black, linewidth=0.04](6.73881,-1.236173)(7.05881,-2.876173)
\psline[linecolor=black, linewidth=0.04](8.15881,-0.11617302)(8.11881,-2.896173)
\psline[linecolor=black, linewidth=0.04](8.45881,0.023826981)(9.2788105,-2.896173)
\rput[bl](7.09881,-2.696173){$G$}
\rput[bl](8.21881,-2.696173){$G$}
\rput[bl](9.29881,-2.696173){$G$}
\rput[bl](6.57881,0.06382698){$G$}
\rput[bl](7.37881,0.943827){$G$}
\rput[bl](1.6388102,-1.0761731){$\delta$}
\rput[bl](8.05881,0.08382698){$\delta$}
\rput[bl](6.4788103,1.203827){$\delta$}
\rput[bl](1.7788101,-0.19617301){$G$}
\rput[bl](4.25881,0.08382698){$\mathbf{5}$}
\pscircle[linecolor=black, linewidth=0.04, dimen=outer](15.47881,0.82382697){0.18}
\rput[bl](14.13881,-0.49617302){$=$}
\psframe[linecolor=black, linewidth=0.04, dimen=outer](14.59881,0.503827)(13.87881,-0.21617302)
\pscircle[linecolor=black, linewidth=0.04, dimen=outer](19.17881,-0.556173){0.36}
\psline[linecolor=black, linewidth=0.04](12.15881,1.4238269)(12.15881,-0.43617302)
\psline[linecolor=black, linewidth=0.04](11.93881,1.583827)(10.45881,-2.856173)
\psline[linecolor=black, linewidth=0.04](12.17881,-1.096173)(10.83881,-1.756173)
\psline[linecolor=black, linewidth=0.04](12.43881,1.503827)(13.51881,-2.816173)
\psline[linecolor=black, linewidth=0.04](12.17881,2.903827)(12.17881,2.083827)
\rput[bl](12.15881,1.6838269){$\delta$}
\rput[bl](19.13881,1.663827){$\delta$}
\rput[bl](20.51881,0.423827){$\varepsilon$}
\psline[linecolor=black, linewidth=0.04](19.17881,2.923827)(19.17881,2.103827)
\psline[linecolor=black, linewidth=0.04](19.15881,1.463827)(18.97881,-0.296173)
\psline[linecolor=black, linewidth=0.04](19.39881,-0.27617303)(20.53881,0.18382698)
\psline[linecolor=black, linewidth=0.04](20.49881,0.82382697)(19.49881,1.563827)
\psline[linecolor=black, linewidth=0.04](18.89881,1.583827)(17.67881,-2.876173)
\psline[linecolor=black, linewidth=0.04](18.93881,-0.79617304)(17.95881,-1.836173)
\psline[linecolor=black, linewidth=0.04](19.39881,-0.836173)(20.49881,-2.876173)
\rput[bl](19.298811,2.583827){$G$}
\rput[bl](14.11881,0.043826982){$\mathbf{6}$}
\rput[bl](17.17881,0.08382698){$\mathbf{7}$}
\end{pspicture}
}

\end{center}

At the 2-category level Definition~\ref{defmopwr} really is just an example: 
a mixed opwreath in $\KK$ is a mixed wreath in $\KK^{\mathrm{op}}$.
Indeed, in the presence of right adjoints, we will now point out how 
a mixed opwreath amounts to a wreath.  

Recall from \cite{7} the terminology and concept of mates under adjunction.
Here is an exercise on mates using the string calculus.

\begin{proposition}\label{wrmates}
Suppose $T$ is a monad on $\CA\in \mathfrak{K}$.
Suppose $G\dashv S$ are adjoint endomorphisms of $\CA$.
Mixed opwreath structures on $G$ around $T$ correspond under adjoint
mateship to wreath structures on $S$ around $T$.
\end{proposition}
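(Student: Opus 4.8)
The plan is to realise the whole correspondence as an instance of the calculus of mates of \cite{7}, for the fixed adjunction $G\dashv S$, and then to check that this bijection of data also matches the two families of axioms. Write $\theta\dd 1_{\CA}\Rightarrow SG$ and $\kappa\dd GS\Rightarrow 1_{\CA}$ for the unit and counit of $G\dashv S$, subject to the triangle identities. Since taking a mate is a bijection on $2$-cells, the three pieces of data will correspond at once; the real content of the proposition is the equivalence of the seven axioms, which I would treat by ``bending wires'' in the string calculus.

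First I would record the three mate formulas obtained by transposing every occurrence of $G$ into an occurrence of $S$ along the adjunction:
\begin{eqnarray*}
\lambda &=& \bigl(TS \stackrel{\theta TS}{\lra} SGTS \stackrel{S\zeta S}{\lra} STGS \stackrel{ST\kappa}{\lra} ST\bigr), \\
\sigma &=& \bigl(1_{\CA} \stackrel{\theta}{\lra} SG \stackrel{S\varepsilon}{\lra} ST\bigr), \\
\nu &=& \bigl(SS \stackrel{\theta SS}{\lra} SGSS \stackrel{S\delta SS}{\lra} STGGSS \stackrel{STG\kappa S}{\lra} STGS \stackrel{ST\kappa}{\lra} ST\bigr),
\end{eqnarray*}
where in the last line the two applications of $\kappa$ collapse the two $GS$-bigons and, by interchange, may be performed in either order. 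Each assignment is obtained from a finite chain of adjoint transposes, so each is invertible (its inverse bends the $S$-strings back to $G$-strings using $\theta$ and $\kappa$); hence mixed-opwreath data $(\zeta,\delta,\varepsilon)$ on $G$ corresponds bijectively to wreath data $(\lambda,\nu,\sigma)$ on $S$.

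The key principle I would then invoke is that taking mates is a process on the string calculus that is compatible with composition and whiskering, and that it fixes everything not being transposed; in particular the monad multiplication $\mu$ and unit $\eta$ of $T$ pass through untouched, merely relocated along the strands. Consequently every pasting equation built from $\zeta,\delta,\varepsilon,\mu,\eta,\theta,\kappa$ is \emph{equivalent} to the pasting equation obtained by replacing each $G$-string with a bent $S$-string, each closed $G$--$S$ bigon so produced being annihilated by a triangle identity. Running each of the seven opwreath diagrams through this wire-bending and reading off the result, with the dictionary $\zeta\leftrightarrow\lambda$, $\delta\leftrightarrow\nu$, $\varepsilon\leftrightarrow\sigma$, I expect axioms $\mathbf 1,\dots,\mathbf 7$ to land exactly on the wreath axioms $\mathbf 1,\dots,\mathbf 7$: axioms $\mathbf 1$ and $\mathbf 2$ are the compatibilities of $\zeta$ (resp.\ $\lambda$) with $\mu$ and with $\eta$; axioms $\mathbf 3$ and $\mathbf 4$ relate $\zeta$ to $\varepsilon$ and to $\delta$ (resp.\ $\lambda$ to $\sigma$ and to $\nu$); and axioms $\mathbf 5,\mathbf 6,\mathbf 7$ are the coassociativity and the two counit laws for $\delta$, which become the associativity and two unit laws for $\nu$.

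The main obstacle is the bookkeeping in this last step, and it is most delicate where more than one copy of $G$ is transposed simultaneously. In the coassociativity axiom $\mathbf 5$ the comultiplication $\delta$ occurs twice and there are three output $G$-strings, so its mate involves $\nu$ applied twice with a nontrivial nesting; I anticipate several triangle-identity cancellations are needed to see that the two sides become precisely the two bracketings of the wreath associativity axiom $\mathbf 5$, and to confirm that no spurious $T$-multiplication node is created or destroyed. The compatibility axioms $\mathbf 1$ and $\mathbf 4$ are the next most involved, since there the wire-bending and the multiplication of $T$ interact along a common strand. Once these are verified, the remaining axioms, which involve only the units $\eta$, $\sigma$, $\varepsilon$, follow by the same process with just one bigon to cancel in each.
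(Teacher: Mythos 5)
Your proposal is correct and is precisely the argument the paper intends: the paper offers no written proof, introducing the proposition only as ``an exercise on mates using the string calculus,'' and your mate formulas for $\lambda$, $\sigma$, $\nu$ (using the composite adjunction $GG\dashv SS$ for $\nu$) together with the functoriality of mateship under pasting are exactly that exercise carried out. The dictionary you give between the seven opwreath axioms and the seven wreath axioms also matches the numbering in the paper's string diagrams.
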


In the situation of Proposition~\ref{wrmates}, the Eilenberg-Moore
construction for the wreath product $ST$ does not easily reinterpret 
in terms of $G$ and $T$, rather, as you would expect, the Kleisli 
construction does. We shall now define this in general.

By applying Proposition~\ref{upemgt} to $\KK^{\mathrm{op}}$, 
and by defining the {\em mixed Kleisli construction} as the composite
\begin{eqnarray}
\mathrm{mkl} \dd \mathrm{KL}^{\mathrm{co}}(\mathrm{KL}(\KK))\stackrel{\mathrm{KL}^{\mathrm{co}}(\mathrm{kl})}\lra \mathrm{KL}^{\mathrm{co}}(\KK)\stackrel{\mathrm{kl}^{\mathrm{co}}}\lra \KK \ ,
\end{eqnarray}
we obtain:
\begin{proposition}\label{upklgt}
$\KK(\mathrm{mkl}(G,\zeta,T),\CX)\cong \mathrm{mem}(\KK(G,\CX),\KK(\zeta,\CX),\KK(T,\CX))$
\end{proposition}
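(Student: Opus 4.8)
The plan is to obtain Proposition~\ref{upklgt} as the formal $\mathrm{op}$-dual of Proposition~\ref{upemgt}, applied to the 2-category $\KK^{\mathrm{op}}$ in place of $\KK$. First I would check that the input data match up under the duality. Passing to $\KK^{\mathrm{op}}$ reverses composition of 1-cells but leaves 2-cells unchanged, so the composite $GT$ in $\KK$ is the composite $TG$ in $\KK^{\mathrm{op}}$. Hence a 2-cell $\zeta\dd GT\Rightarrow TG$ in $\KK$ reads as a 2-cell $\xi\dd TG\Rightarrow GT$ in $\KK^{\mathrm{op}}$, the comultiplication $\delta\dd G\Rightarrow TGG$ reads as $\delta\dd G\Rightarrow GGT$, and the counit $\varepsilon\dd G\Rightarrow T$ is unchanged. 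Comparing with the explicit description following Definition~\ref{defmopwr}, this shows that a mixed opwreath around $T$ in $\KK$ is precisely a mixed wreath around $T$ in $\KK^{\mathrm{op}}$, making exact the principle behind Proposition~\ref{wrmates} that a mixed opwreath in $\KK$ is a mixed wreath in $\KK^{\mathrm{op}}$.

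Next I would verify that the two constructions agree on objects, that is, that $\mathrm{mkl}(G,\zeta,T)=\mathrm{mem}_{\KK^{\mathrm{op}}}(G,\xi,T)$. This is a matter of chasing the four dualities through the defining composites, using $\mathrm{KL}=\mathrm{EM}^{\mathrm{op}}$, $\mathrm{KL}^{\mathrm{co}}=\mathrm{EM}^{\mathrm{coop}}$, the fact that $\mathrm{kl}$ is the $\mathrm{op}$-dual $\mathrm{em}^{\mathrm{op}}$ of the Eilenberg--Moore construction, and the relations $\mathrm{op}\cdot\mathrm{coop}=\mathrm{co}$ and $\mathrm{coop}=\mathrm{op}\cdot\mathrm{co}$ among the dualities. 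A short computation rewrites the domain of $\mathrm{mkl}$ as
\begin{eqnarray*}
\mathrm{KL}^{\mathrm{co}}(\mathrm{KL}(\KK))=\big(\mathrm{EM}^{\mathrm{co}}(\mathrm{EM}(\KK^{\mathrm{op}}))\big)^{\mathrm{op}} \ ,
\end{eqnarray*}
its intermediate object as $\mathrm{KL}^{\mathrm{co}}(\KK)=(\mathrm{EM}^{\mathrm{co}}(\KK^{\mathrm{op}}))^{\mathrm{op}}$, and its codomain $\KK$ as $(\KK^{\mathrm{op}})^{\mathrm{op}}$; the two legs $\mathrm{KL}^{\mathrm{co}}(\mathrm{kl})$ and $\mathrm{kl}^{\mathrm{co}}$ then become the $\mathrm{op}$-duals of $\mathrm{EM}^{\mathrm{co}}(\mathrm{em})$ and $\mathrm{em}^{\mathrm{co}}$ from the defining composite \eqref{memfunctor} for $\mathrm{mem}_{\KK^{\mathrm{op}}}$. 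Since $\mathrm{op}$-duality fixes objects, the two constructions yield the same object of $\KK$.

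Finally I would translate the representability statement. Applying Proposition~\ref{upemgt} in $\KK^{\mathrm{op}}$ gives
\begin{eqnarray*}
\KK^{\mathrm{op}}(\CX,\mathrm{mem}_{\KK^{\mathrm{op}}}(G,\xi,T))\cong \mathrm{mem}(\KK^{\mathrm{op}}(\CX,G),\KK^{\mathrm{op}}(\CX,\xi),\KK^{\mathrm{op}}(\CX,T)) \ .
\end{eqnarray*}
Rewriting every hom via $\KK^{\mathrm{op}}(\CX,-)=\KK(-,\CX)$ turns the left-hand side into $\KK(\mathrm{mkl}(G,\zeta,T),\CX)$ by the previous paragraph, while the contravariant representable sends the endomorphism $G$, the 2-cell $\zeta$ and the monad $T$ to the endofunctor $\KK(G,\CX)$, the 2-cell $\KK(\zeta,\CX)$ and the monad $\KK(T,\CX)$ on $\KK(\CA,\CX)$ (the action being precomposition with $G$); thus the right-hand side becomes $\mathrm{mem}(\KK(G,\CX),\KK(\zeta,\CX),\KK(T,\CX))$, exactly as claimed.

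I expect the only genuine obstacle to be the disciplined bookkeeping of the two independent dualities in the nested construction $\mathrm{KL}^{\mathrm{co}}(\mathrm{KL}(-))$: the $\mathrm{op}$ that interchanges the Eilenberg--Moore and Kleisli completions, and the $\mathrm{co}$ that interchanges monads and comonads. The one point requiring care is that the variance of the representable $\KK(-,\CX)$ matches the $\mathrm{op}$ used to pass from $\mathrm{mem}$ to $\mathrm{mkl}$, so that the two $\mathrm{op}$'s cancel and leave a genuine, undualized mixed Eilenberg--Moore problem in $\mathrm{Cat}$ on the right-hand side; the $\mathrm{mem}$ appearing there is therefore the same covariant construction as in Proposition~\ref{upemgt}, with no further dualization needed. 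Everything else is a direct reading-off from Proposition~\ref{upemgt}.
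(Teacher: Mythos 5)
Your proposal is correct and follows exactly the paper's own route: the paper obtains Proposition~\ref{upklgt} precisely by applying Proposition~\ref{upemgt} to $\KK^{\mathrm{op}}$, using the definition of $\mathrm{mkl}$ as the $\mathrm{op}$-dual of the composite defining $\mathrm{mem}$, and your duality bookkeeping ($\mathrm{KL}=\mathrm{EM}^{\mathrm{op}}$, $\mathrm{KL}^{\mathrm{co}}=\mathrm{EM}^{\mathrm{coop}}$, variance of $\KK(-,\CX)$) checks out. The only slight misattribution is that the identification of a mixed opwreath in $\KK$ with a mixed wreath in $\KK^{\mathrm{op}}$ is the paper's remark preceding Proposition~\ref{wrmates} rather than the content of that proposition (which concerns mates under an adjunction $G\dashv S$), but this does not affect the argument.
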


For $\KK=\mathrm{Cat}$, the category $\mathrm{mkl}(G,\zeta,T)$ has the same objects as
$\CA$ and has homsets defined by
\begin{eqnarray*}
\mathrm{mkl}(G,\zeta,T)(A,B)=\CA(GA,TB) \ .
\end{eqnarray*}
Composition is defined by {\em wreath convolution}: the composite of $f\dd GA\to TB$
and $g\dd GB\to TC$ is $g\circ f = f*_{\zeta}g$ as in the commutative diagram \eqref{wrconv}.
 \begin{eqnarray}\label{wrconv}
\begin{aligned}
\xymatrix{
GA \ar[r]^-{\delta_A} \ar[d]_-{f*_{\zeta}g} & TGGA\ar[r]^-{TGf} & TGTB \ar[d]^-{T\zeta_{B}} \\
TC  & TTTC \ar[l]^-{\mu_{3 C}} & TTGB \ar[l]^-{TTg} }
\end{aligned}
\end{eqnarray}
 \begin{center}
 \psscalebox{0.6 0.6} 
{
\begin{pspicture}(0,-4.8833404)(6.46,4.8833404)
\pscircle[linecolor=black, linewidth=0.04, dimen=outer](4.58,3.4033403){0.34}
\pscircle[linecolor=black, linewidth=0.04, dimen=outer](5.82,-2.1566596){0.34}
\pscircle[linecolor=black, linewidth=0.04, dimen=outer](4.46,-0.3966597){0.34}
\pscircle[linecolor=black, linewidth=0.04, dimen=outer](5.8,1.4233403){0.34}
\psline[linecolor=black, linewidth=0.04](4.58,4.8833404)(4.58,3.7233403)
\psline[linecolor=black, linewidth=0.04](4.8,3.1833403)(5.64,1.6833403)
\psline[linecolor=black, linewidth=0.04](4.58,3.1033404)(4.26,-0.15665969)
\psline[linecolor=black, linewidth=0.04](5.6,1.1633403)(4.68,-0.1766597)
\psline[linecolor=black, linewidth=0.04](6.06,1.2233403)(6.06,-1.9566597)
\psline[linecolor=black, linewidth=0.04](4.66,-0.6366597)(5.56,-1.9366597)
\psline[linecolor=black, linewidth=0.04](4.3,3.2433403)(1.42,-4.87666)
\psline[linecolor=black, linewidth=0.04](4.24,-0.5966597)(1.78,-3.8966596)(5.6,-2.3966596)
\psline[linecolor=black, linewidth=0.04](6.02,-2.4166596)(6.02,-4.87666)
\psline[linecolor=black, linewidth=0.04](6.04,1.6433403)(6.0,4.8433404)
\rput[bl](0.0,0.30334032){\Large{$f*_{\zeta}g=$}}
\rput[bl](5.68,1.2433403){$f$}
\rput[bl](5.74,-2.2966597){$g$}
\rput[bl](4.36,-0.5566597){$\zeta$}
\rput[bl](6.1,3.3233404){$A$}
\rput[bl](6.18,-0.4566597){$B$}
\rput[bl](6.18,-3.8766596){$C$}
\rput[bl](4.5,3.2633402){$\delta$}
\rput[bl](2.84,-0.1166597){$T$}
\rput[bl](5.26,2.4633403){$G$}
\rput[bl](1.26,-4.5366597){$T$}
\rput[bl](3.32,-2.1966598){$T$}
\rput[bl](4.0,-3.3966596){$T$}
\rput[bl](4.78,-1.5966597){$G$}
\rput[bl](4.52,1.5433403){$G$}
\rput[bl](5.24,0.2233403){$T$}
\rput[bl](4.2,4.4233403){$G$}
\end{pspicture}
}
 \end{center}
 
 \begin{remark}
 {\em 
 \begin{enumerate}
 \item For any monoidal category $\CV$, the functor $\CV\to [\CV,\CV]$, 
 taking $X$ to $X\otimes -$, is strong monoidal. 
So each monoid $A=(A,m,j)$ in $\CV$ is taken to a monad 
$T=(T,\mu,\eta)$ on $\CV$.
We will speak of a {\em mixed opwreath} around $A$ to mean a quadruple
$(C,d,w,z)$ consisting of an object $C$ and morphisms $d\dd C\to A\otimes C\otimes C$,
$w\dd C\to A$ and $z\dd C\otimes A\to A\otimes C$ satisfying the string diagram conditions
of Definition~\ref{defmopwr} with $T,\mu,\eta,G,\delta,\varepsilon,\zeta$ replaced by
$A,m,j,C,d,w,z$, respectively.  
Moreover, since mixed opwreaths are defined purely in terms of the monoidal
structure, each mixed opwreath $(C,d,w,z)$ around $A$ in $\CV$ defines gives
rise to a mixed opwreath $(G,\delta,\varepsilon,\zeta)$ around the monad $T=A\otimes -$.
Furthermore, we write $\mathrm{mkl}(C,z,A)$ for the category $\mathrm{mkl}(G,\zeta,T)$.
\item For any monoidal category $\CV$, the functor $\CV^{\mathrm{rev}}\to [\CV,\CV]$, 
 taking $X$ to $-\otimes X$, is also strong monoidal.
Thus the mixed opwreath around $A$ as in Item~1 is taken to a mixed wreath around
the monad $-\otimes A$ on $\CV$. 
We write $\mathrm{mem}(C,z,A)$ for the mixed Eilenberg-Moore construction \eqref{memfunctor} applied to this mixed wreath. 
\end{enumerate}}
 \end{remark}
 
 \begin{definition}
{\em The set $\mathrm{mkl}(C,z,A)(I,I) \cong \CV(C,A)$ of endomorphisms of $I$ in the category
$\mathrm{mkl}(C,z,A)$ is, of course, a monoid under composition. 
The multiplication might be called
{\em $z$-parametrized convolution} on $\CV(C,A)$.}  
\end{definition}
 
 \begin{example}[The Heisenberg category]\label{ExHsbg}
 {\em 
Suppose $A=(A,m,j,c,e)$ is a bimonoid in the braided monoidal category $\CV$. 
Denote the braiding by $\sigma$.
We obtain a mixed opwreath on the monoid $A=(A,m,j)$ 
(indeed it is a mixed opdistributive law) in $\CV$ by taking the comonoid
$C$ to be $A=(A,c,e)$, $z$ to be
\begin{eqnarray*}
z_{\mathrm{h}}=\left(A\otimes A\stackrel{1_A\otimes \sigma^{-1}_{A,A}c}\lra A\otimes A\otimes A \stackrel{\sigma^{-1}_{A,A} \otimes 1_A}\lra A\otimes A\otimes A\stackrel{1_A \otimes m}\lra A\otimes A \right) \\
=  \left(A\otimes A\stackrel{1_A\otimes c}\lra A\otimes A\otimes A \stackrel{\sigma^{-1}_{A,A\otimes A}}\lra A\otimes A\otimes A\stackrel{1_A \otimes m}\lra A\otimes A \right) \ ,
\end{eqnarray*}
$d$ to be $j\otimes c\dd A\to A\otimes A\otimes A$, and $w$ to be $j\circ e\dd A\to A$.
\begin{center}
\psscalebox{0.5 0.5} 
{
\begin{pspicture}(0,-3.0069635)(5.647003,3.0069635)
\psline[linecolor=black, linewidth=0.04](4.434674,2.9933739)(4.414674,1.753374)(5.594674,1.013374)(5.614674,0.973374)
\psline[linecolor=black, linewidth=0.04](4.434674,1.733374)(3.214674,1.013374)
\psline[linecolor=black, linewidth=0.04](4.454674,-3.0066261)(4.434674,-1.766626)(5.634674,-1.006626)
\psline[linecolor=black, linewidth=0.04](4.454674,-1.806626)(0.014673919,2.9933739)
\psline[linecolor=black, linewidth=0.04](3.254674,1.033374)(5.614674,-1.026626)
\psline[linecolor=black, linewidth=0.04](0.05467392,-2.986626)(3.074674,-0.686626)
\psline[linecolor=black, linewidth=0.04](3.494674,-0.446626)(4.174674,-0.0066260146)
\psline[linecolor=black, linewidth=0.04](4.5346737,0.17337398)(5.634674,1.033374)
\end{pspicture}
}
\end{center}
We put $\mathrm{Hb}(A) = \mathrm{mkl}(A,z_{\mathrm{h}},A)$ and call it 
the {\em Heisenberg category} of the bimonoid $A$ in $\CV$. Here is the reason.

\begin{proposition}
Suppose $\CV = \mathrm{Vect}$ is the symmetric monoidal category of vector spaces over a 
fixed field and $H$ is a Hopf algebra, then the $z_{\mathrm{h}}$-parametrized convolution of 
linear endomorphisms of $H$ is the Heisenberg product as defined in \cite{MoreiraPhD, AFM2015}. 
\end{proposition}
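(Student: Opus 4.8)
The plan is to unwind the definition of $z_{\mathrm{h}}$-parametrized convolution on $\CV(C,A)=\mathrm{End}(H)$ directly from the wreath convolution diagram \eqref{wrconv}, evaluate it on the explicit Heisenberg data, and then match the resulting formula with the Heisenberg product of \cite{MoreiraPhD, AFM2015}.

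First I would specialize \eqref{wrconv} to endomorphisms of the unit object $I$. Under the strong monoidal functor $X\mapsto X\otimes-$ the monad $T$ is $A\otimes-$ and the mixed opwreath $(G,\delta,\varepsilon,\zeta)$ is $(C\otimes-,\,d,\,w,\,z_{\mathrm{h}})$; taking the generic objects $A,B,C$ of the diagram to all be $I$ and discarding the resulting unit constraints turns the composite $GA\to TGGA\to TGTB\to TTGB\to TTTC\to TC$ into the single morphism
\[
C\xrightarrow{d}A\otimes C\otimes C\xrightarrow{1\otimes1\otimes f}A\otimes C\otimes A\xrightarrow{1\otimes z_{\mathrm{h}}}A\otimes A\otimes C\xrightarrow{1\otimes1\otimes g}A\otimes A\otimes A\xrightarrow{m_3}A,
\]
where $m_3$ is the ternary multiplication. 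This is exactly the string diagram displayed beneath \eqref{wrconv}, read with $G=T=A\otimes-$.

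Second, I would substitute the Heisenberg data $d=j\otimes c$, $w=j\circ e$ and either of the two (equal) descriptions of $z_{\mathrm{h}}$, and compute in Sweedler notation. Since $\CV=\mathrm{Vect}$ is symmetric, $\sigma^{-1}=\sigma$ is the flip, so the hexagon decomposition $\sigma^{-1}_{A,A\otimes A}=(\sigma^{-1}_{A,A}\otimes1)(1\otimes\sigma^{-1}_{A,A})$ (which is precisely what makes the two formulas for $z_{\mathrm{h}}$ agree) reduces to bookkeeping of swaps and gives $z_{\mathrm{h}}(u\otimes v)=\sum v_2\otimes u\,v_1$. Tracing $x\in H$ through the composite then yields
\[
(f*_{z_{\mathrm{h}}}g)(x)=\sum f(x_2)_2\;\, g\!\left(x_1\,f(x_2)_1\right).
\]
As a consistency check one verifies that the convolution unit is $w=j\circ e$, i.e.\ $x\mapsto\varepsilon(x)1$: the counit axioms immediately give $w*_{z_{\mathrm{h}}}g=g$ and $f*_{z_{\mathrm{h}}}w=f$.

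Finally, I would recall the definition of the Heisenberg product from \cite{MoreiraPhD, AFM2015} and identify it with the displayed formula. The conceptual content is essentially complete at this point: both operations split $x$ by the coproduct, feed one half of $f(x_2)$ into the argument of $g$ by multiplication and leave the other half on the outside. The main obstacle is therefore purely one of conventions. I expect to have to reconcile (i) the handedness---our use of $X\mapsto X\otimes-$ versus the dual functor $X\mapsto-\otimes X$ of the Remark, which governs whether $f$ lands on $x_2$ or on $x_1$; (ii) the order of the product, i.e.\ whether the reference's product corresponds to $f*_{z_{\mathrm{h}}}g$ or to $g*_{z_{\mathrm{h}}}f$; and (iii) the differing Sweedler-index conventions of the two sources. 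Since no antipode enters the formula above---the mixed opwreath of the Example uses only the bimonoid structure of $A$---the Hopf hypothesis on $H$ serves only to place us in the setting of the references, and the comparison is a finite diagram check rather than a computation invoking the antipode.
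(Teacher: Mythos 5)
The paper offers no proof of this proposition at all---it is asserted as the justification for the name ``Heisenberg category'' and left to the reader---so there is nothing to compare your route against; what you propose is exactly the direct verification the paper implicitly expects. Your unwinding of \eqref{wrconv} is correct: with $T=A\otimes-$, $G=C\otimes-$ and all parameter objects equal to $I$, the composite collapses to $m_3\circ(1\otimes 1\otimes g)\circ(1\otimes z_{\mathrm{h}})\circ(1\otimes 1\otimes f)\circ d$, and substituting $d=j\otimes c$ and $z_{\mathrm{h}}(u\otimes v)=\sum v_2\otimes uv_1$ (your hexagon remark explaining the two displayed forms of $z_{\mathrm{h}}$ is also right) does give
\begin{equation*}
(f*_{z_{\mathrm{h}}}g)(x)=\sum f(x_2)_2\,g\bigl(x_1\,f(x_2)_1\bigr),
\end{equation*}
with unit $j\circ e$, as you check. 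Your observations that the antipode never enters and that the handedness is fixed by the paper's use of $X\mapsto X\otimes-$ for $\mathrm{mkl}$ (the functor $-\otimes X$ only appears for $\mathrm{mem}$) are both correct. The one thing still owed is the step you defer: to finish, you must actually transcribe the formula for the Heisenberg product from \cite{AFM2015, MoreiraPhD} and match it term by term against the display above, settling the order of the product and the Sweedler indexing rather than merely listing them as conventions to be reconciled---that comparison is the entire content of the proposition, even though it is mechanical once your formula is in hand.
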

 }\end{example}

\section{Twisted coactions}\label{sectiontwcoact}

The construction of a mixed opwreath explained in this section is a dual of the wreath 
construction appearing as Example 3.3 in \cite{71} based on 
Sweedler's crossed product of Hopf algebras.

We begin by pointing out that, given a monoid $A$ in any monoidal category $\CV$,
the representable functor $\CV(-,A)\dd \CV^{\mathrm{op}}\to \mathrm{Set}$ becomes monoidal
when equipped with the natural family of functions
\begin{eqnarray}
\CV(X,A)\times \CV(Y,A)\lra \CV(X\otimes Y,A) \ ,
\end{eqnarray}
defined by $(u,v)\mapsto u\bullet v$ as depicted below, and $\eta_A\in \CV(I,A)$.
The reason is that the Yoneda embedding $\CV\to [\CV^{\mathrm{op}},\mathrm{Set}]$
is monoidal, where monoids in the codomain are precisely monoidal functors.

\begin{center}
\psscalebox{0.6 0.6} 
{
\begin{pspicture}(0,-2.1101382)(3.66,2.1101382)
\pscircle[linecolor=black, linewidth=0.04, dimen=outer](0.38,0.7501381){0.38}
\pscircle[linecolor=black, linewidth=0.04, dimen=outer](3.22,0.7501381){0.38}
\psline[linecolor=black, linewidth=0.04](0.38,1.0901381)(0.38,2.1101382)
\psline[linecolor=black, linewidth=0.04](3.22,1.0901381)(3.22,2.070138)
\psline[linecolor=black, linewidth=0.04](0.38,0.3901381)(1.66,-0.6698619)(1.64,-2.1098619)
\psline[linecolor=black, linewidth=0.04](3.22,0.3901381)(1.62,-0.6898619)
\rput[bl](0.32,0.6501381){$u$}
\rput[bl](3.16,0.6301381){$v$}
\rput[bl](0.0,1.490138){$X$}
\rput[bl](3.38,1.4701381){$Y$}
\rput[bl](0.62,-0.34986192){$A$}
\rput[bl](2.48,-0.40986192){$A$}
\rput[bl](1.72,-1.509862){$A$}
\end{pspicture}
}
\end{center}
Here are the properties of the dot product:
\begin{itemize}
\item[\em{(unitality)}] $\eta_A\bullet u=u=u\bullet \eta_A$
\item[\em{(associativity)}] $(u\bullet v)\bullet w = u\bullet (v\bullet w)$
\item[\em{(naturality)}] $(u\bullet v)\circ (f\otimes g) = (u\circ f)\bullet (v\circ g)$
\end{itemize}

We also recall that, if $\CV$ is (lax) braided then the tensor product $A\otimes B$ of monoids $A$ and $B$ is again a monoid: the (lax) braiding gives a distributive law of $A$ over $B$ used in defining $\mu_{A\otimes B}$
in terms of the multiplications $\mu_A$ and $\mu_B$.

\begin{definition}\label{twcoact}
{\em Let $A$ be a monoid and $B$ be a bimonoid in a (lax) braided monoidal category $\CV$. 
A {\em twisted (right) coaction} of $B$ on $A$ consists of a monoid morphism
$\gamma\dd A\to A\otimes B$ and a morphism $\tau\dd I\to A\otimes B^{\otimes 2}$
such that
\begin{itemize}
\item[\em{(counitality)}] 
$\left( 1_A\otimes \varepsilon_B \right)\circ \gamma = 1_A$
\item[\em{($\tau$-coassociativity)}] 
$\tau\bullet \left( (\gamma \otimes 1_B)\circ \gamma \right) = \left( (1_A\otimes \delta_B)\circ \gamma \right)\bullet \tau$
\item[\em{(2-cocyclicity)}] $\left( (1_A\otimes \delta_B\otimes 1_B)\circ \tau \right)\bullet (\tau\otimes \eta_B) 
= \left( (1_{A\otimes B}\otimes \delta_B)\circ \tau \right)\bullet \left( (\gamma \otimes 1_{B\otimes B})\circ  \tau)\right)$
\item[\em{(normality)}]
$(1_{A\otimes B}\otimes \varepsilon_B)\circ \tau = \eta_A\otimes\eta_B = (1_{A}\otimes \varepsilon_B\otimes 1_B)\circ \tau  \ .$
\end{itemize}
Note that we do not require $\tau$ to be $\bullet$-invertible.
}  
\end{definition}  
Here in string form are the conditions on a twisted coaction.
\begin{center}
\psscalebox{0.6 0.6} 
{
\begin{pspicture}(0,-2.4539683)(17.28,2.4539683)
\pscircle[linecolor=black, linewidth=0.04, dimen=outer](0.56,0.6939682){0.38}
\pscircle[linecolor=black, linewidth=0.04, dimen=outer](3.04,0.6739682){0.38}
\pscircle[linecolor=black, linewidth=0.04, dimen=outer](6.7,-0.4060318){0.38}
\rput[bl](4.58,0.2939682){$\Large{=}$}
\rput[bl](0.48,0.5939682){$\gamma$}
\rput[bl](0.18,2.0939682){$A$}
\rput[bl](7.68,-2.346032){$B$}
\rput[bl](2.94,0.5739682){$\gamma$}
\rput[bl](6.6,-0.5260318){$\gamma$}
\rput[bl](3.18,2.0939682){$A$}
\rput[bl](5.72,2.0939682){$A$}
\rput[bl](7.54,2.0739682){$A$}
\rput[bl](5.2,-2.366032){$A$}
\rput[bl](0.02,-2.2460318){$A$}
\rput[bl](0.96,-1.2260318){$A$}
\rput[bl](17.0,-1.7260318){$B$}
\rput[bl](2.32,-1.2260318){$B$}
\rput[bl](3.36,-0.5660318){$B$}
\rput[bl](3.36,-2.3060317){$B$}
\psline[linecolor=black, linewidth=0.04](0.56,2.4539683)(0.56,1.0139682)
\psline[linecolor=black, linewidth=0.04](3.1,2.413968)(3.08,1.0139682)
\psline[linecolor=black, linewidth=0.04](0.34,0.4139682)(0.34,-2.3860319)
\psline[linecolor=black, linewidth=0.04](3.3,0.4339682)(3.28,-2.366032)
\psline[linecolor=black, linewidth=0.04](2.8,0.4339682)(0.34,-1.4460318)
\psline[linecolor=black, linewidth=0.04](0.8,0.4139682)(1.64,-0.2060318)
\psline[linecolor=black, linewidth=0.04](1.92,-0.4260318)(3.28,-1.4260318)
\psline[linecolor=black, linewidth=0.04](6.74,1.2739682)(6.74,-0.0860318)
\psline[linecolor=black, linewidth=0.04](6.46,-0.6860318)(5.48,-2.4260318)
\psline[linecolor=black, linewidth=0.04](6.92,-0.6860318)(7.68,-2.4460318)
\psline[linecolor=black, linewidth=0.04](6.0,2.393968)(6.76,1.2139682)(7.6,2.393968)
\rput[bl](6.82,0.4739682){$A$}
\rput[bl](0.0,-0.5660318){$A$}
\pscircle[linecolor=black, linewidth=0.04, dimen=outer](12.72,0.1939682){0.38}
\rput[bl](13.76,-1.7660317){$B$}
\rput[bl](12.62,0.0739682){$\gamma$}
\psline[linecolor=black, linewidth=0.04](12.76,1.8739682)(12.76,0.5139682)
\psline[linecolor=black, linewidth=0.04](12.48,-0.0860318)(11.5,-1.8260318)
\psline[linecolor=black, linewidth=0.04](12.94,-0.0860318)(13.7,-1.8460318)
\rput[bl](12.84,1.0739682){$A$}
\rput[bl](14.62,0.2939682){$\Large{=}$}
\pscircle[linecolor=black, linewidth=0.04, dimen=outer](15.72,2.0139682){0.16}
\pscircle[linecolor=black, linewidth=0.04, dimen=outer](12.76,2.0139682){0.16}
\psline[linecolor=black, linewidth=0.04](15.72,1.8739682)(15.74,-1.8060318)
\pscircle[linecolor=black, linewidth=0.04, dimen=outer](16.92,2.0339682){0.16}
\psline[linecolor=black, linewidth=0.04](16.92,1.8939682)(16.94,-1.7860318)
\rput[bl](11.22,-1.7460318){$A$}
\rput[bl](15.32,-1.7260318){$A$}
\end{pspicture}
}
\end{center}
\begin{center}
\psscalebox{0.6 0.6} 
{
\begin{pspicture}(0,-2.45)(17.52,2.45)
\pscircle[linecolor=black, linewidth=0.04, dimen=outer](0.58,0.65){0.38}
\pscircle[linecolor=black, linewidth=0.04, dimen=outer](3.06,0.63){0.38}
\pscircle[linecolor=black, linewidth=0.04, dimen=outer](7.78,-0.49){0.38}
\rput[bl](4.6,0.25){$\Large{=}$}
\rput[bl](0.5,0.55){$\gamma$}
\rput[bl](0.2,2.05){$A$}
\rput[bl](9.96,-2.39){$B$}
\rput[bl](7.68,-0.61){$\gamma$}
\rput[bl](5.38,-2.33){$A$}
\rput[bl](0.0,-2.31){$A$}
\rput[bl](0.78,-1.41){$A$}
\rput[bl](1.96,-2.35){$B$}
\rput[bl](3.38,-0.61){$B$}
\rput[bl](3.38,-2.35){$B$}
\psline[linecolor=black, linewidth=0.04](0.6,2.45)(0.6,1.01)
\psline[linecolor=black, linewidth=0.04](0.36,0.37)(0.36,-2.43)
\psline[linecolor=black, linewidth=0.04](3.32,0.39)(3.3,-2.41)
\psline[linecolor=black, linewidth=0.04](2.82,0.39)(0.36,-1.49)
\psline[linecolor=black, linewidth=0.04](0.82,0.37)(1.24,0.05)
\psline[linecolor=black, linewidth=0.04](2.68,-0.79)(3.3,-1.17)
\rput[bl](9.72,2.07){$A$}
\rput[bl](0.02,-0.61){$A$}
\pscircle[linecolor=black, linewidth=0.04, dimen=outer](14.48,-0.09){0.38}
\rput[bl](15.02,-0.87){$B$}
\rput[bl](14.38,-0.21){$\gamma$}
\psline[linecolor=black, linewidth=0.04](14.52,2.29)(14.52,0.25)
\psline[linecolor=black, linewidth=0.04](14.24,-0.37)(13.02,-2.37)
\psline[linecolor=black, linewidth=0.04](14.7,-0.37)(15.14,-1.21)
\rput[bl](14.64,1.97){$A$}
\rput[bl](16.38,0.01){$\Large{=}$}
\psline[linecolor=black, linewidth=0.04](17.48,2.25)(17.5,-2.43)
\rput[bl](12.88,-2.01){$A$}
\rput[bl](17.08,-2.01){$A$}
\rput[bl](3.02,0.55){$\tau$}
\rput[bl](6.0,0.55){$\tau$}
\psline[linecolor=black, linewidth=0.04](3.02,0.25)(1.78,-2.43)(1.78,-2.45)
\psline[linecolor=black, linewidth=0.04](1.2,0.09)(1.42,-0.51)
\psline[linecolor=black, linewidth=0.04](1.52,-0.77)(2.0,-1.95)
\psline[linecolor=black, linewidth=0.04](1.24,0.05)(1.76,-0.23)
\psline[linecolor=black, linewidth=0.04](2.02,-0.37)(2.46,-0.63)
\pscircle[linecolor=black, linewidth=0.04, dimen=outer](6.08,0.61){0.38}
\pscircle[linecolor=black, linewidth=0.04, dimen=outer](9.6,0.57){0.38}
\rput[bl](9.48,0.47){$\gamma$}
\psline[linecolor=black, linewidth=0.04](9.6,2.43)(9.6,0.91)
\psline[linecolor=black, linewidth=0.04](9.34,0.33)(7.76,-0.13)
\psline[linecolor=black, linewidth=0.04](9.84,0.31)(9.84,-2.39)
\psline[linecolor=black, linewidth=0.04](8.06,-0.69)(8.08,-2.41)
\psline[linecolor=black, linewidth=0.04](5.78,0.41)(5.78,-2.45)
\psline[linecolor=black, linewidth=0.04](7.5,-0.73)(5.8,-1.77)
\psline[linecolor=black, linewidth=0.04](6.06,0.27)(7.02,-0.85)
\psline[linecolor=black, linewidth=0.04](7.16,-1.05)(8.06,-1.99)
\psline[linecolor=black, linewidth=0.04](6.4,0.43)(8.04,0.07)
\psline[linecolor=black, linewidth=0.04](8.36,-0.05)(9.86,-0.43)
\rput[bl](7.68,-2.37){$B$}
\pscircle[linecolor=black, linewidth=0.04, dimen=outer](15.22,-1.35){0.18}
\end{pspicture}
}
\end{center}

\begin{center}
\psscalebox{0.6 0.6} 
{
\begin{pspicture}(0,-2.354768)(20.1,2.354768)
\pscircle[linecolor=black, linewidth=0.04, dimen=outer](0.92,1.9347681){0.38}
\pscircle[linecolor=black, linewidth=0.04, dimen=outer](2.54,-0.105231896){0.38}
\pscircle[linecolor=black, linewidth=0.04, dimen=outer](8.1,-0.3452319){0.38}
\rput[bl](4.86,0.33476812){$\Large{=}$}
\rput[bl](10.3,-2.2452319){$B$}
\rput[bl](0.0,-2.185232){$A$}
\rput[bl](5.94,-2.225232){$A$}
\pscircle[linecolor=black, linewidth=0.04, dimen=outer](13.52,0.9547681){0.38}
\rput[bl](13.18,-1.285232){$B$}
\psline[linecolor=black, linewidth=0.04](13.28,0.6747681)(12.06,-1.3252319)
\psline[linecolor=black, linewidth=0.04](13.74,0.6747681)(14.18,-0.1652319)
\rput[bl](14.92,0.1947681){$\Large{=}$}
\psline[linecolor=black, linewidth=0.04](15.66,0.8747681)(15.68,-1.3852319)
\rput[bl](11.76,-1.2652318){$A$}
\rput[bl](15.3,-1.285232){$A$}
\rput[bl](2.48,-0.20523189){$\tau$}
\rput[bl](6.12,1.8947681){$\tau$}
\pscircle[linecolor=black, linewidth=0.04, dimen=outer](6.2,1.9747682){0.38}
\pscircle[linecolor=black, linewidth=0.04, dimen=outer](9.9,1.8747681){0.38}
\psline[linecolor=black, linewidth=0.04](9.58,1.714768)(8.08,0.014768105)
\psline[linecolor=black, linewidth=0.04](10.22,1.7347682)(10.22,-2.2652318)
\psline[linecolor=black, linewidth=0.04](8.38,-0.5452319)(8.4,-2.2652318)
\rput[bl](8.0,-2.225232){$B$}
\pscircle[linecolor=black, linewidth=0.04, dimen=outer](14.26,-0.3052319){0.18}
\rput[bl](0.86,1.8747681){$\tau$}
\psline[linecolor=black, linewidth=0.04](1.26,1.7947681)(3.554898,-0.19405542)(4.72,-2.345232)
\psline[linecolor=black, linewidth=0.04](0.64,1.714768)(0.34,-2.3052318)
\psline[linecolor=black, linewidth=0.04](2.22,-0.2852319)(0.38,-1.4452319)
\psline[linecolor=black, linewidth=0.04](2.88,-0.2452319)(2.88,-2.345232)
\psline[linecolor=black, linewidth=0.04](2.46,-0.44523188)(1.28,-2.3052318)
\psline[linecolor=black, linewidth=0.04](1.04,1.5747681)(1.14,-0.12523189)
\psline[linecolor=black, linewidth=0.04](1.14,-0.105231896)(1.26,-0.6852319)
\psline[linecolor=black, linewidth=0.04](1.14,0.2747681)(1.76,-0.4052319)
\psline[linecolor=black, linewidth=0.04](1.9,-0.6252319)(2.1,-0.8652319)
\psline[linecolor=black, linewidth=0.04](2.22,-0.9852319)(2.88,-1.7452319)
\psline[linecolor=black, linewidth=0.04](1.34,-0.9452319)(1.64,-1.7652318)
\rput[bl](8.02,-0.4852319){$\gamma$}
\rput[bl](9.84,1.8147681){$\tau$}
\psline[linecolor=black, linewidth=0.04](9.82,1.5347681)(9.32,-2.2652318)
\psline[linecolor=black, linewidth=0.04](7.84,-0.5852319)(5.52,-2.225232)(5.54,-2.205232)
\psline[linecolor=black, linewidth=0.04](6.5,1.7947681)(7.98,1.1747681)(8.38,0.5747681)
\psline[linecolor=black, linewidth=0.04](7.98,1.1547681)(8.78,1.0147681)(8.78,1.0147681)
\psline[linecolor=black, linewidth=0.04](9.02,0.9547681)(9.58,0.8147681)
\psline[linecolor=black, linewidth=0.04](9.82,0.75476813)(10.2,0.5747681)
\psline[linecolor=black, linewidth=0.04](8.52,0.3747681)(9.48,-0.9052319)
\psline[linecolor=black, linewidth=0.04](6.2,1.6347681)(6.44,-0.045231894)(7.2,-0.8452319)
\psline[linecolor=black, linewidth=0.04](7.38,-1.0252318)(8.38,-1.7052319)(8.4,-1.7052319)
\psline[linecolor=black, linewidth=0.04](5.88,1.7947681)(6.22,-1.7452319)
\rput[bl](8.98,-2.2452319){$B$}
\rput[bl](4.2,-2.2452319){$B$}
\rput[bl](2.92,-2.225232){$B$}
\rput[bl](1.5,-2.205232){$B$}
\rput[bl](13.46,0.8947681){$\tau$}
\psline[linecolor=black, linewidth=0.04](13.5,0.6147681)(13.5,-1.3252319)
\pscircle[linecolor=black, linewidth=0.04, dimen=outer](15.66,1.0147681){0.18}
\psline[linecolor=black, linewidth=0.04](16.52,0.8747681)(16.54,-1.3852319)
\pscircle[linecolor=black, linewidth=0.04, dimen=outer](16.52,1.0147681){0.18}
\rput[bl](17.12,0.1747681){$\Large{=}$}
\pscircle[linecolor=black, linewidth=0.04, dimen=outer](18.5,0.9547681){0.38}
\rput[bl](14.02,0.23476811){$B$}
\psline[linecolor=black, linewidth=0.04](18.5,0.5947681)(18.5,-0.44523188)
\rput[bl](17.66,-1.3052319){$A$}
\pscircle[linecolor=black, linewidth=0.04, dimen=outer](18.52,-0.5852319){0.18}
\rput[bl](18.46,0.8747681){$\tau$}
\psline[linecolor=black, linewidth=0.04](18.82,0.7947681)(19.82,-1.4452319)
\rput[bl](16.62,-1.3052319){$B$}
\rput[bl](19.82,-1.3252319){$B$}
\rput[bl](18.56,-0.105231896){$B$}
\psline[linecolor=black, linewidth=0.04](18.22,0.7347681)(17.54,-1.4252319)
\end{pspicture}
}
\end{center}

\begin{proposition}\label{mopwrfromtwcoact}
Given a twisted coaction of a monoid $A$ on a bimonoid $B$ in a braided monoidal category $\CV$,
using the notation of Definition~\ref{twcoact}, a mixed opwreath around $A$ is defined by the
comonoid $B$ equipped with the morphisms
\begin{eqnarray*}
\zeta = (\eta_A\otimes 1_B)\bullet \gamma \dd B\otimes A \lra A\otimes B \\
\delta = (\eta_A\otimes \delta_B)\bullet \tau \dd B\lra A\otimes B\otimes B \\
\varepsilon = \eta_A\circ \varepsilon_B \dd B\lra A \phantom{real lot of stuff} 
\end{eqnarray*}
as required by Definition~\ref{defmopwr}.
\end{proposition}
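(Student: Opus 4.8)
The plan is to verify, one at a time, the seven string-diagram axioms $\mathbf 1$--$\mathbf 7$ of Definition~\ref{defmopwr} for the triple $(\zeta,\delta,\varepsilon)$, reducing each to one of the four conditions of Definition~\ref{twcoact} together with the bimonoid axioms on $B$ and the monoid axioms on $A$. The organising principle is that each of $\zeta,\delta,\varepsilon$ is written as a $\bullet$-convolution into one of the braided tensor-product monoids $A$, $A\otimes B$, $A\otimes B^{\otimes 2}$ (which are monoids by the braided-monoid remark preceding Definition~\ref{twcoact}), and that $\varepsilon_B\dd B\to I$ and $\delta_B\dd B\to B\otimes B$ are monoid morphisms of the bimonoid $B$. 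Hence any map of the form $1_A\otimes(\text{monoid morphism out of }B^{\otimes k})$ commutes with the tensor-product multiplications, which lets me push it through the defining convolutions; this single observation, applied repeatedly alongside the unitality, associativity and naturality of $\bullet$, is what turns each opwreath axiom into a twisted-coaction axiom.

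First I would dispatch the two morphism axioms $\mathbf 1$ and $\mathbf 2$, which assert that $(B,\zeta)$ is a morphism in $\mathrm{KL}(\CV)$, i.e.\ that $\zeta$ is a mixed distributive law over the multiplication and unit of $A$. Since $\zeta=(\eta_A\otimes 1_B)\bullet\gamma$, naturality of $\bullet$ gives $\zeta\circ(1_B\otimes m_A)=(\eta_A\otimes 1_B)\bullet(\gamma\circ m_A)$, so after using the right-unit law of the monoid $A\otimes B$ and associativity of $\bullet$, axiom $\mathbf 1$ collapses to $\gamma\circ m_A=m_{A\otimes B}\circ(\gamma\otimes\gamma)$, while axiom $\mathbf 2$ collapses to $\gamma\circ\eta_A=\eta_A\otimes\eta_B=\eta_{A\otimes B}$. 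Both are exactly the assertion in Definition~\ref{twcoact} that $\gamma$ is a monoid morphism, so $\mathbf 1$ and $\mathbf 2$ need no further input.

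Next come the counit axioms $\mathbf 3$, $\mathbf 6$, $\mathbf 7$. For $\mathbf 3$ I would precompose $\zeta$ with the monoid morphism $1_A\otimes\varepsilon_B\dd A\otimes B\to A$; since it commutes with $m_{A\otimes B}$, the left-hand side becomes the $\bullet$-product into $A$ of $(1_A\otimes\varepsilon_B)\circ(\eta_A\otimes 1_B)=\varepsilon$ with $(1_A\otimes\varepsilon_B)\circ\gamma=1_A$, where the latter is the \emph{counitality} condition, leaving $\varepsilon\bullet 1_A$ as required. For $\mathbf 6$ and $\mathbf 7$ I would apply $1_A\otimes\varepsilon_B\otimes 1_B$ (resp.\ $1_A\otimes 1_B\otimes\varepsilon_B$) to $\delta=(\eta_A\otimes\delta_B)\bullet\tau$: pushing through the convolution, the $\eta_A\otimes\delta_B$ factor becomes $\eta_A\otimes 1_B$ by a counit law of $\delta_B$, while the $\tau$ factor becomes $\eta_A\otimes\eta_B=\eta_{A\otimes B}$ by the two \emph{normality} equations, so the whole reduces to the $\mathrm{KL}$-identity $\eta_A\otimes 1_B$. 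Thus $\mathbf 3$, $\mathbf 6$, $\mathbf 7$ follow from counitality, normality and the counit axioms of $B$.

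Finally the two genuinely structural axioms. I expect $\mathbf 4$ (compatibility of $\delta$ with $\zeta$) to follow from \emph{$\tau$-coassociativity}: after expanding $\delta$ and $\zeta$ and pushing the braidings through $A\otimes B^{\otimes 2}$, both sides are $\bullet$-products of a $\tau$-term with an iterated-coaction term, and the identity $\tau\bullet((\gamma\otimes 1_B)\circ\gamma)=((1_A\otimes\delta_B)\circ\gamma)\bullet\tau$ is precisely what matches them, once one also uses that $\delta_B$ is a monoid morphism. The hard part is $\mathbf 5$, the coassociativity of $\delta$: here both sides are threefold convolutions in $A\otimes B^{\otimes 3}$, and matching them requires combining \emph{2-cocyclicity} with the coassociativity of $\delta_B$ while carefully tracking where the braiding inserts the $A$-factors produced by $\tau$ and by $\gamma$, and reconciling the $\tau\otimes\eta_B$ and $(1_{A\otimes B}\otimes\delta_B)$ bookkeeping on the two sides. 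The main obstacle throughout is thus not conceptual but the braiding bookkeeping in the tensor-power monoids $A\otimes B^{\otimes k}$; once the convolution-preservation observation is in hand each of $\mathbf 1$--$\mathbf 4$, $\mathbf 6$, $\mathbf 7$ is a short reduction, and only $\mathbf 5$ genuinely consumes the full strength of 2-cocyclicity.
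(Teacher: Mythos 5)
Your proposal matches the paper's own argument essentially step for step: the same one-to-one pairing of the seven opwreath axioms with the seven twisted-coaction conditions (axioms $\mathbf 1$,$\mathbf 2$ from $\gamma$ being a monoid morphism, $\mathbf 3$ from counitality, $\mathbf 4$ from $\tau$-coassociativity, $\mathbf 5$ from 2-cocyclicity, $\mathbf 6$,$\mathbf 7$ from normality), and the same mechanism of $\bullet$-dotting each condition with a map built from $\eta_A$, $\varepsilon_B$, $\delta_B$ and pushing monoid morphisms out of $B^{\otimes k}$ through the convolution, which is exactly the content of the paper's Lemma~\ref{mopwrlem}. The level of detail you leave implicit (notably in axiom $\mathbf 5$) is comparable to the paper's own sketch, so this is correct and not a different route.
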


Here are the string diagrams for these $\zeta$, $\delta$ and $\varepsilon$.

\begin{center}
\psscalebox{0.6 0.6} 
{
\begin{pspicture}(0,-1.9500768)(15.12,1.9500768)
\rput[bl](0.0,-0.009923172){$\Huge{\zeta \ :}$}
\rput[bl](5.82,0.050076827){$\Huge{\delta \ :}$}
\rput[bl](13.02,0.09007683){$\Huge{\varepsilon \ :}$}
\pscircle[linecolor=black, linewidth=0.04, dimen=outer](2.84,0.55007684){0.4}
\rput[bl](2.7,0.41007683){$\gamma$}
\rput[bl](8.92,0.8900768){$\tau$}
\psline[linecolor=black, linewidth=0.04](2.58,0.29007682)(1.46,-1.8299232)
\psline[linecolor=black, linewidth=0.04](2.84,0.93007684)(2.84,1.9500768)
\psline[linecolor=black, linewidth=0.04](2.86,0.15007682)(2.82,-1.8699231)
\psline[linecolor=black, linewidth=0.04](2.84,-1.1099231)(2.4,-0.34992316)(2.4,-0.34992316)
\psline[linecolor=black, linewidth=0.04](2.22,-0.08992317)(1.36,1.8700768)
\pscircle[linecolor=black, linewidth=0.04, dimen=outer](9.02,0.99007684){0.4}
\psline[linecolor=black, linewidth=0.04](8.72,0.73007685)(6.66,-1.9099232)
\psline[linecolor=black, linewidth=0.04](9.04,0.61007684)(9.02,-1.9499232)
\psline[linecolor=black, linewidth=0.04](9.32,0.7700768)(11.26,-1.9299232)
\psline[linecolor=black, linewidth=0.04](6.64,1.8500768)(8.22,0.3300768)
\psline[linecolor=black, linewidth=0.04](9.16,-0.42992318)(10.88,-1.3899232)
\psline[linecolor=black, linewidth=0.04](8.44,0.15007682)(8.94,-0.34992316)
\psline[linecolor=black, linewidth=0.04](8.04,-0.32992318)(9.02,-1.4099232)
\psline[linecolor=black, linewidth=0.04](7.82,-0.10992317)(7.24,1.2700769)
\psline[linecolor=black, linewidth=0.04](14.76,1.8100768)(14.74,0.49007684)
\psline[linecolor=black, linewidth=0.04](14.76,-0.58992314)(14.74,-1.9299232)
\pscircle[linecolor=black, linewidth=0.04, dimen=outer](14.74,0.35007682){0.18}
\pscircle[linecolor=black, linewidth=0.04, dimen=outer](14.76,-0.44992316){0.18}
\rput[bl](2.98,1.4700768){$A$}
\rput[bl](2.96,-1.7499232){$B$}
\rput[bl](1.2,-1.7099231){$A$}
\rput[bl](6.44,-1.7699232){$A$}
\rput[bl](14.86,-1.8299232){$A$}
\rput[bl](1.06,1.4700768){$B$}
\rput[bl](6.42,1.4900768){$B$}
\rput[bl](9.12,-1.8299232){$B$}
\rput[bl](11.28,-1.8299232){$B$}
\rput[bl](14.36,1.4700768){$B$}
\end{pspicture}
}

\end{center}

In proving Proposition~\ref{mopwrfromtwcoact}, a lemma will be useful.

\begin{lemma}\label{mopwrlem} The following equations hold:
\begin{itemize}
\item[(i)] $\delta_B\bullet \delta_B = \delta_B \circ \mu_B$
\item[(ii)] $(1\otimes \delta_B)\bullet \tau = (1_A\otimes \eta_B\otimes \eta_B) \bullet \delta$
\item[(iii)] $(\eta_A\otimes \delta_B)\bullet ((1\otimes \delta_B)\circ \gamma) = ((1\otimes \delta_B)\circ \zeta)$ 
\end{itemize}
\end{lemma}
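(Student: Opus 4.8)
The plan is to reduce each identity, after substituting the definitions of $\delta$ and $\zeta$ from Proposition~\ref{mopwrfromtwcoact}, to a bimonoid axiom for $B$ together with associativity of the dot product and one further formal property: for any monoid morphism $\phi\dd M\to N$ one has $\phi\circ(u\bullet v)=(\phi\circ u)\bullet(\phi\circ v)$, which is immediate from $\phi\circ\mu_M=\mu_N\circ(\phi\otimes\phi)$ and the defining formula $u\bullet v=\mu\circ(u\otimes v)$ for the common target monoid. Throughout I read $1\otimes\delta_B$ as $1_A\otimes\delta_B\dd A\otimes B\to A\otimes B\otimes B$, so that both sides of (ii) and (iii) are morphisms into the tensor-product monoid $A\otimes B\otimes B$.

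For (i) I would simply unfold both sides in the monoid $B\otimes B$: the left-hand side is $\mu_{B\otimes B}\circ(\delta_B\otimes\delta_B)$ while the right-hand side is $\delta_B\circ\mu_B$, so the asserted equality is verbatim the bimonoid compatibility axiom expressing that $\delta_B\dd B\to B\otimes B$ is a morphism of monoids. I would flag this reading explicitly, since it is exactly the fact I need again in (iii): because $1_A$ and $\delta_B$ are monoid morphisms, so is $1_A\otimes\delta_B\dd A\otimes B\to A\otimes B\otimes B$, and hence $1_A\otimes\delta_B$ is compatible with $\bullet$ in the sense of the property above.

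For (ii) I would substitute $\delta=(\eta_A\otimes\delta_B)\bullet\tau$ into the right-hand side and relocate the bracket by associativity of $\bullet$, reducing the claim to the identity $(1_A\otimes\eta_B\otimes\eta_B)\bullet(\eta_A\otimes\delta_B)=1_A\otimes\delta_B$. For (iii) I would dually substitute $\zeta=(\eta_A\otimes 1_B)\bullet\gamma$ into the right-hand side, then push $1_A\otimes\delta_B$ through the dot product using the postcomposition compatibility just recorded (legitimate precisely because $1_A\otimes\delta_B$ is a monoid morphism, which is (i)), and finish with the evident computation $(1_A\otimes\delta_B)\circ(\eta_A\otimes 1_B)=\eta_A\otimes\delta_B$.

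The only genuine bookkeeping, and hence the main place where care is needed, lies in the two unit-insertion computations $(1_A\otimes\eta_B\otimes\eta_B)\bullet(\eta_A\otimes\delta_B)=1_A\otimes\delta_B$ and $(1_A\otimes\delta_B)\circ(\eta_A\otimes 1_B)=\eta_A\otimes\delta_B$, where one must track the braidings appearing in $\mu_{A\otimes B\otimes B}$. This is harmless because every strand being braided is a unit strand ($c_{I,X}=1=c_{X,I}$), so all crossings with $\eta_A$ and $\eta_B$ are trivial, the two genuine outputs of $\delta_B$ never cross each other, and unitality of $\mu_A$ and $\mu_B$ collapses each product to the claimed value. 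I would either argue this in words or draw the single string diagram that makes the cancellation transparent.
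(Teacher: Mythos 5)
Your proposal is correct and is essentially the paper's own argument written out equationally: the paper likewise identifies (i) with the bimonoid axiom that $\delta_B$ is a monoid morphism, and disposes of (ii) and (iii) as ``immediate from the string diagrams'' (invoking that same axiom for (iii)), which is exactly the substitution, $\bullet$-associativity, postcomposition along the monoid morphism $1_A\otimes\delta_B$, and unit-strand bookkeeping you carry out. Your explicit flagging of the fact that monoid morphisms preserve $\bullet$ (distinct from the precomposition ``naturality'' listed in the paper) is a worthwhile clarification, but it does not change the route.
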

\begin{proof}
Item (i) is a restatement of the bimonoid axiom for $B$ asserting that $\mu_B$ preserves comultiplication.
Item (ii) is immediate on drawing the string diagrams.
Item (iii) is immediate from the string diagrams and using the bimonoid axiom. 
\end{proof}

Here now are some clues on proving Proposition~\ref{mopwrfromtwcoact}.
There are seven conditions satisfied by the twisted coaction.
There are seven axioms to verify for the mixed opwreath.
For condition 1, we can express the fact that $\gamma$ preserves multiplication 
in the form $\gamma \circ \mu_A = \gamma \bullet \gamma$, then dot both sides on
the left with $\eta_A\otimes 1_B$.
Condition 2 follows by dotting on both sides by $\eta_A\otimes 1_B$ 
the equation expessing the fact that $\gamma$ preserves unit.
Condition 3 follows from counitality of $\gamma$ and the bimonoid condition that
$\mu_B$ preserves counit. Condition 4 is obtained by dotting both sides of the
$\tau$-coassociativity equation on the left by $\eta_A\otimes \delta_B$ and employing
Lemma~\ref{mopwrlem}. Condition 5 follows by dotting both sides of the
cocyclicity condition by $\eta_A\otimes \mu_{B 3}$, where $\mu_{B 3}$ is the ternary 
multiplication $\mu_B\circ (\mu_B\otimes 1_B) = \mu_B\circ (1_B\otimes \mu_B)$,
and employing Lemma~\ref{mopwrlem}. 
Unsurprisingly by now, conditions 6 and 7 follow from the two equations of normality
and that $\mu_B$ preserves counit.  

\begin{remark}
{\em If the 2-cocycle $\tau$ has the form $\eta_A\otimes \tau'$ for
some $\tau'\dd I\to B\otimes B$ then the mixed opwreath of 
Proposition~\ref{mopwrfromtwcoact} is a mixed opdistributive law.}
\end{remark}

There is a 2-categorical viewpoint on twisted coactions.
Recall (see \cite{BTC} or Chapter 15 of \cite{90}, for example)
that the category $\mathrm{Mon}\CV$ of monoids in the braided monoidal
category $\CV$ is a monoidal 2-category. If $f,g\dd M \to N$ are monoid morphisms
then a 2-cell $\xi\dd f\Rightarrow g\dd M\to N$ is a morphism $\xi\dd I\to N$ in $\CV$
satisfying the {\em naturality condition} 
$\xi\bullet f = g\bullet \xi$.  
The vertical composite of $\xi$ with a 2-cell $\zeta\dd g\Rightarrow h\dd M\to N$ is $\zeta \bullet \xi$.  
The horizontal composite of $\xi$ with a 2-cell $\xi'\dd f'\Rightarrow g'\dd N\to L$ is $\xi'\bullet (f'\circ \xi) = (g'\circ \xi) \bullet \xi'$.
The tensor product in $\mathrm{Mon}\CV$ is the tensor product of monoids that we have been dealing with already 
(it uses the braiding of $\CV$ yet is not itself a braided tensor product unless $\CV$ is symmetric). 

Now we can think of our bimonoid $B$ as a comonoid in the 2-category $\mathrm{Mon}\CV$.

\begin{proposition}
A twisted right coaction of the bimonoid $B$ on the monoid $A$ in $\CV$ is precisely a normal
lax right coaction of the comonoid $B$ on the object $A$ in the 2-category $\mathrm{Mon}\CV$.
\end{proposition}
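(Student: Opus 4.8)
The plan is to unwind the definition of a \emph{normal lax right coaction} in the monoidal 2-category $\mathrm{Mon}\CV$ and to check that, after translating each 2-cell into its underlying morphism $I\to(-)$ in $\CV$ as prescribed in the paragraph preceding the statement, the resulting data and axioms are exactly those of Definition~\ref{twcoact}. No genuinely new computation is needed beyond recognising each clause, so the proof is an identification rather than an argument.

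First I would identify the data. A right coaction of the comonoid $B$ on $A$ is, to begin with, a 1-cell $\gamma\dd A\to A\otimes B$, i.e. a monoid morphism, which matches the first piece of Definition~\ref{twcoact} verbatim. The coassociativity constraint is a 2-cell $\tau\dd(\gamma\otimes 1_B)\circ\gamma\Rightarrow(1_A\otimes\delta_B)\circ\gamma$ in $\mathrm{Mon}\CV$; by the explicit description of 2-cells this is precisely a morphism $\tau\dd I\to A\otimes B^{\otimes2}$ in $\CV$, and its defining naturality condition $\tau\bullet f=g\bullet\tau$, with $f=(\gamma\otimes 1_B)\circ\gamma$ and $g=(1_A\otimes\delta_B)\circ\gamma$, is word-for-word the ($\tau$-coassociativity) equation. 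Normality of the coaction means that the counit comparison 2-cell is the identity, which forces the strict equality $(1_A\otimes\varepsilon_B)\circ\gamma=1_A$; this is the (counitality) clause. Thus the pair $(\gamma,\tau)$ corresponds bijectively to the data of Definition~\ref{twcoact}, and three of the conditions are already accounted for.

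It remains to match the coherence axioms of the lax coaction with the (2-cocyclicity) and (normality) clauses. The coassociativity 2-cell must satisfy one pentagon-type coherence relating the two ways of comparing the threefold coaction $A\to A\otimes B^{\otimes3}$; since $B$ is a \emph{strict} comonoid in $\mathrm{Mon}\CV$ (coassociativity and counitality of $\delta_B$ and $\varepsilon_B$ hold on the nose, being exactly the bimonoid axioms), this pentagon collapses to a condition purely on $\tau$. Whiskering $\tau$ by the 1-cells $\gamma$ and $\delta_B$ and computing the horizontal composites via the formula $(g'\circ\xi)\bullet\xi'=\xi'\bullet(f'\circ\xi)$, one reads off exactly the (2-cocyclicity) equation. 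Finally, the two unit-coherence axioms relate $\tau$ to the now-strict counit on either side; using that $\varepsilon_B$ is a monoid morphism together with $(1_A\otimes\varepsilon_B)\circ\gamma=1_A$, they reduce to the two equalities of (normality).

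The main obstacle is purely bookkeeping. Because the tensor product in $\mathrm{Mon}\CV$ is built from the braiding of $\CV$, one must track carefully where the braiding enters when whiskering $\tau$ and when forming the horizontal and vertical composites, and then confirm that the orientations of the pentagon and of the two unit coherences produce precisely the displayed sides of (2-cocyclicity) and (normality) rather than their braided variants. This is most transparently verified in the string calculus, where each whiskering is a planar insertion and the coherence axioms become the evident rebracketings already drawn for the twisted coaction.
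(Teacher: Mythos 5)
Your proposal is correct and follows essentially the same route as the paper: both unwind the data of a normal lax right coaction in $\mathrm{Mon}\CV$, identify $\gamma$ as the 1-cell, read off $\tau$-coassociativity as the naturality condition of the constraint 2-cell $\tau$, counitality as the normality of the counit constraint, and match the remaining coherence axioms with 2-cocyclicity and normality. The paper's proof is merely terser on the final coherence check, which it states rather than unwinds.
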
 
\begin{proof}
We need to see what is involved in a normal lax right coaction.
Indeed we have a morphism $\gamma \dd A\to A\otimes B$ in $\mathrm{Mon}\CV$, as required.
We have a 2-cell
\begin{equation}
\begin{aligned}
\xymatrix{
A \ar[d]_{\gamma}^(0.5){\phantom{AAAAAA}}="1" \ar[rr]^{\gamma}  && A\otimes B \ar[d]^{1\otimes \delta_B}_(0.5){\phantom{AAAAAA}}="2" \ar@{=>}"1";"2"^-{\tau}
\\
A\otimes B \ar[rr]_-{\gamma\otimes 1} && A\otimes B\otimes B 
}
\end{aligned}
\end{equation}
in $\mathrm{Mon}\CV$; the 2-cell condition is precisely $\tau$-coassociativity. 
A lax coaction also involves a 2-cell $\tau_0 \dd 1_A\Rightarrow (1_A\otimes \varepsilon) \circ \gamma$
however the normality condition is that this should be an identity; this precisely amounts to counitality. 
The axioms on $\tau$ for a lax coaction are precisely cocyclicity and normality.    
\end{proof}

We now remind the reader of the role that variants of the (algebraist's) simplicial category 
$\mathbf{\Delta}$ play as host to generic monoids, comonoids, actions and coactions (see \cite{Lawvere1969}). 
We write $\mathbf{\Delta}_{\bot,\top}$ for the strict monoidal category whose objects are
the strictly positive finite ordinals, whose morphisms are order and first-and-last-element preserving
functions; the tensor product is $m\oplus n = m+n-1$ thought of, for the purposes of the value
at morphisms, as identifying the last element of $m$ with the first element of $n$.
Similarly, $\mathbf{\Delta}_{\top}$ denotes the category whose objects are
the strictly positive finite ordinals, whose morphisms are order and last-element preserving
functions. There is a strict right action 
\begin{eqnarray}
\oplus \dd \mathbf{\Delta}_{\top}\otimes \mathbf{\Delta}_{\bot,\top} \lra \mathbf{\Delta}_{\top}
\end{eqnarray}
of $\mathbf{\Delta}_{\bot,\top}$ on $\mathbf{\Delta}_{\top}$ defined by $m\oplus n = m+n-1$ 
as before except that on morphisms, the left morphism in the operation need not preserve the first element,
so the result may not either. 

Here is a picture of some generating morphisms of $\mathbf{\Delta}_{\top}$.
\begin{eqnarray}\label{Deltop}
\UseTips{}\entrymodifiers={+<4mm>!C}\xymatrix{
\mathbf{\Delta}_{\top} \ :
& 1  \ar @<5pt>[r]^-{\partial_0}   
& 2  \ar @<15pt>[r]^-{\partial_0}  \ar @<-6pt>[r]^-{\partial_1} \ar @<5pt>[l]_-{\sigma_0}
& 3 \ar[l]<-6pt>_-{\sigma_0} \ar[l]<15pt>_-{\sigma_1}  \ar @<21pt>[r]^-{\partial_0}  \ar @<2pt>[r]^-{\partial_1}  \ar @<-18pt>[r]^-{\partial_2}
& \dots \ar[l]<-12pt>_-{\sigma_0} \ar[l]<7pt>_-{\sigma_1} \ar[l]<26pt>_-{\sigma_2} \,. }
\end{eqnarray}
 
The corresponding picture for $\mathbf{\Delta}_{\bot,\top}$ is obtained by deleting all the 
morphisms labelled $\partial_0$. 
There is a canonical inclusion $\mathbf{\Delta}_{\bot,\top}\rightarrowtail \mathbf{\Delta}_{\top}$
which respects the right actions by $\mathbf{\Delta}_{\bot,\top}$. 
The corresponding picture for $\mathbf{\Delta}$ is obtained by adjoining the object $0$ and 
morphisms $\partial_n\dd n \to (n+1)$. 
There is a canonical inclusion $\mathbf{\Delta}_{\top}\rightarrowtail \mathbf{\Delta}$.  
Moreover, $\mathbf{\Delta}^{\mathrm{op}} \cong \mathbf{\Delta}_{\bot,\top}$.

A comonoid $B=(B,\delta_B,\varepsilon_B)$ in a monoidal category $\CW$ defines a strong monoidal functor 
$\bar{B}\dd \mathbf{\Delta}_{\bot,\top} \lra \CW$ whose value at $n$ is $B^{\otimes (n-1)}$, whose
value at $\sigma_r\dd (n+1)\to n$ is 
\begin{eqnarray}
\sigma_r = 1_{B^{\otimes r}}\otimes \varepsilon_B \otimes 1_{B^{\otimes (n-r-1)}}\dd B^{\otimes n}\to B^{\otimes (n-1)} \ ,
\end{eqnarray}
and whose value at $\partial_r\dd n\to (n+1)$ is 
\begin{eqnarray}
\partial_r = 1_{B^{\otimes r}}\otimes \delta_B \otimes 1_{B^{\otimes (n-r-2)}}\dd B^{\otimes (n-1)}\to B^{\otimes n} \ .
\end{eqnarray}
 In fact this gives an equivalence of categories implying that, up to isomorphism, the comonoid $B$ can be recaptured from the strong monoidal functor.
 
 Suppose $\CW$ acts on a category $\CA$ via a functor $\star\dd \CA\times \CW\to \CA$. 
 The comonoid $B$ in $\CW$ defines a comonad $-\star B$ on $\CA$.
 We define a {\em right action} of $B$ on an object $A\in \CA$ to be the structure 
 $\gamma\dd A\to A\star B$ of an Eilenberg-Moore $(-\star B)$-coalgebra on $A$. 
 There is a functor $\bar{A}\dd \mathbf{\Delta}_{\top}\lra \CA$ whose value at the object
 $n$ is $A\star B^{\otimes (n-1)}$, whose value at $\partial_0\dd n\to (n+1)$ is 
 \begin{eqnarray*}
\gamma \otimes 1_{B^{\otimes (n-1)}} \dd A\star B^{\otimes (n-1)}\lra A\star B^{\otimes n} \ , 
\end{eqnarray*}
 and whose value at the other morphisms $\partial_{r+1}$ in \eqref{Deltop} is 
 $\partial_{r+1}=1_A\star \partial_r$ where $\partial_r$ comes from $\bar{B}$. 
Then $\bar{A}$ and $\bar{B}$ comprise an action morphism.
 \begin{eqnarray}
 \begin{aligned}
\xymatrix{
\mathbf{\Delta}_{\top}\times \mathbf{\Delta}_{\bot, \top} \ar[rr]^-{\oplus} \ar[d]_-{\bar{A}\times \bar{B}} && \mathbf{\Delta}_{\top} \ar[d]^-{\bar{A}} \\
\CA\times \CW \ar[rr]_-{\star} && \CA}
\end{aligned}
\end{eqnarray} 
Again, this is part of an equivalence of categories between action morphisms and pairs $(A,B)$.

This is all standard material, albeit maybe not explicitly in the above dual version.

Now suppose $\CA$ is a 2-category and the action $\star\dd \CA\times \CW\to \CA$
corresponds to a functor $\CW \to [\CA,\CA]$ into the 2-functor 2-category.
Suppose $A\in \CA$ has merely a morphism $\gamma \dd A\to A \star B$.
We can define $\bar{A}$ on objects and generating morphisms as before but it is not
quite a functor.

\begin{proposition}\label{Abar}
A normal lax $(-\star B)$-coalgebra structure on $\gamma \dd A\to A \star B$ 
amounts to a normal lax functor structure on $\bar{A}\dd \mathbf{\Delta}_{\top} \to \CA$ 
which has its constraints 
$\bar{A}(\zeta)\circ \bar{A}(\xi)\to \bar{A}(\zeta\circ \xi)$ 
identities unless neither $\xi$ nor $\zeta$ is in $\mathbf{\Delta}_{\bot,\top}$.
In particular, $\bar{A}$ restricts along the inclusion
$\mathbf{\Delta}_{\bot,\top} \rightarrowtail \mathbf{\Delta}_{\top}$ 
to a strict functor, that is, a simplicial object of $\CA$.  
\end{proposition}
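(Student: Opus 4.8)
The plan is to exploit the presentation of $\mathbf{\Delta}_{\top}$ by generators and relations and to reduce the statement to bookkeeping over which generators, and which simplicial relations, leave $\mathbf{\Delta}_{\bot,\top}$. Recall that $\mathbf{\Delta}_{\top}$ is generated by the degeneracies $\sigma_r$ and the faces $\partial_{r+1}$ (all of which preserve the first element, hence lie in $\mathbf{\Delta}_{\bot,\top}$) together with the single family $\partial_0\dd n\to n+1$ of first-element-non-preserving faces, subject to the simplicial identities; every $f\dd m\to n$ factors as $f=\partial_0^{\,f(0)}\circ f'$ with $f'$ first-element preserving. The ambiguity between a $\partial_0$ and a comultiplication created by the identity $\partial_1\partial_0=\partial_0\partial_0$ is exactly what the lax constraint must absorb. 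On the coalgebra side, a normal lax $(-\star B)$-coalgebra is (as in the preceding proposition) a morphism $\gamma\dd A\to A\star B$ equipped with a single lax-coassociativity $2$-cell $\tau\dd(\gamma\star 1_B)\circ\gamma\Rightarrow(1_A\star\delta_B)\circ\gamma$, subject to normality $(1_A\star\varepsilon_B)\circ\gamma=1_A$ and to the cocyclicity coherence on $\tau$.

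For the forward direction, given $(\gamma,\tau)$ I would define $\bar A$ on objects and generators as in the text, so that its restriction along $\mathbf{\Delta}_{\bot,\top}\hookrightarrow\mathbf{\Delta}_{\top}$ is the genuine composite functor $A\star\bar B(-)\dd\mathbf{\Delta}_{\bot,\top}\to\CA$, which is strict because both $A\star-\dd\CW\to\CA$ and $\bar B$ are functors. All composition constraints are declared to be identities except on composable pairs in which both factors leave $\mathbf{\Delta}_{\bot,\top}$; the generating instance is the pair $(\partial_0,\partial_0)$, whose constraint I take to be $\tau$ itself. This is forced and consistent, since under $\bar A$ the two sides of $\partial_1\partial_0=\partial_0\partial_0$ become $(1_A\star\delta_B)\circ\gamma$ and $(\gamma\star 1_B)\circ\gamma$, precisely the target and source of $\tau$. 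The remaining ``doubly outside'' constraints are then obtained by whiskering this single $\tau$, using the interchange law and the strictness on $\mathbf{\Delta}_{\bot,\top}$.

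The core of the verification is that this assignment really is a normal lax functor, i.e. that the constraints respect each simplicial relation and satisfy the associativity coherence. I would organise this by the type of relation. The relation $\sigma_0\partial_0=\mathrm{id}$ maps under $\bar A$ to $(1_A\star\varepsilon_B)\circ\gamma=1_A$, which is exactly normality and forces the unit constraints to be identities. The mixed relations $\partial_j\partial_0=\partial_0\partial_{j-1}$ and $\sigma_j\partial_0=\partial_0\sigma_{j-1}$ (for $j\ge 1$, one factor inside $\mathbf{\Delta}_{\bot,\top}$) express naturality of $\gamma\star 1$ against $\mathbf{\Delta}_{\bot,\top}$-morphisms and force the mixed constraints to be identities, as claimed. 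Finally the associativity coherence for the triple $(\partial_0,\partial_0,\partial_0)$ — the only triple all of whose factors leave $\mathbf{\Delta}_{\bot,\top}$ — unwinds, after whiskering by the relevant $\delta_B$ and $\gamma$, into precisely the cocyclicity equation for $\tau$; every other coherence instance follows from this one together with interchange and strict functoriality on $\mathbf{\Delta}_{\bot,\top}$.

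For the converse I would set $\gamma:=\bar A(\partial_0\dd 1\to 2)$ and $\tau:=$ the $(\partial_0,\partial_0)$-constraint of $\bar A$, and run the equivalences backwards: normality of $\bar A$ returns counitality, and the associativity coherence returns cocyclicity. Since each passage recovers the data of the other on the nose, the two are mutually inverse, giving the bijection. The ``in particular'' is then immediate: restriction along $\mathbf{\Delta}_{\bot,\top}\hookrightarrow\mathbf{\Delta}_{\top}$ discards $\partial_0$, so every surviving constraint is an identity and $\bar A|_{\mathbf{\Delta}_{\bot,\top}}=A\star\bar B(-)$ is a strict functor, that is (using $\mathbf{\Delta}^{\mathrm{op}}\cong\mathbf{\Delta}_{\bot,\top}$) a simplicial object of $\CA$. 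I expect the main obstacle to be the bookkeeping that matches the lax-functor associativity coherence on the triple of $\partial_0$'s with the cocyclicity condition, and the careful confirmation that the mixed-pair constraints are genuinely identities rather than merely coherent isomorphisms; both come down to using strictness on $\mathbf{\Delta}_{\bot,\top}$ and the interchange law to pin down the one free parameter $\tau$.
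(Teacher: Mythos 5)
The paper supplies no written proof of Proposition~\ref{Abar} (it is covered by the remark that ``this is all standard material''), so your reconstruction can only be judged on its own terms. Its architecture is certainly what is intended: factor every morphism of $\mathbf{\Delta}_{\top}$ as $\partial_0^{\,k}\circ f'$ with $f'\in\mathbf{\Delta}_{\bot,\top}$, let the restriction of $\bar{A}$ to $\mathbf{\Delta}_{\bot,\top}$ be the strict functor $A\star\bar{B}(-)$, take the $(\partial_0,\partial_0)$-constraint to be the lax coassociativity $2$-cell $\tau$, and match the pair $(\sigma_0,\partial_0)$ with counitality of $\gamma$ and the triple $(\partial_0,\partial_0,\partial_0)$ with cocyclicity. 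Your direction for $\tau$ is the paper's, and the mixed constraints are indeed forced to be identities by interchange.

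There is, however, one concrete omission. A \emph{normal} lax coaction carries, besides counitality of $\gamma$ and cocyclicity of $\tau$, the two normalization conditions on $\tau$ itself: in the notation of Definition~\ref{twcoact}, $(1_{A\otimes B}\otimes\varepsilon_B)\circ\tau=\eta_A\otimes\eta_B=(1_A\otimes\varepsilon_B\otimes 1_B)\circ\tau$, i.e.\ $\sigma_0\tau=1=\sigma_1\tau$ as the paper records at the end of Section~\ref{sectiontwcoact}. Your inventory of the coalgebra-side axioms lists only counitality and cocyclicity, and on the functor side you assert that every coherence instance other than the one for $(\partial_0,\partial_0,\partial_0)$ follows from interchange and strictness on $\mathbf{\Delta}_{\bot,\top}$. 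That is not so: the associativity coherences for the triples $(\sigma_0,\partial_0,\partial_0)$ and $(\sigma_1,\partial_0,\partial_0)$ compare the whiskering of $\tau$ by the relevant counit with the identity $2$-cell on $\bar{A}(\partial_0)$ (since $\sigma_i\partial_0\partial_0$ collapses to $\partial_0$ along $\sigma_0\partial_0=\mathrm{id}$, respectively commutes past), and these are exactly the two normalization equations, which are independent of cocyclicity. As written, your forward construction need not yield a lax functor, and your converse does not recover the full normal lax coalgebra structure. The repair is pure bookkeeping --- add these two triples to the list of non-automatic coherences and the two equations $\sigma_0\tau=1=\sigma_1\tau$ to the coalgebra-side data --- but the dictionary is incomplete without it.
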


\begin{proposition}\label{Ahat}
In the situation of Proposition~\ref{Abar}, suppose $A$ is pointed by a morphism $\eta_A\dd I\to A$ in $\CA$, 
the comonoid $B$ is pointed by a comonoid morphism $\eta_B\dd I\to B$ in $\CW$, and 
$\gamma \dd A\to A \star B$ respects the pointings, then
each lax functor $\bar{A}\dd \mathbf{\Delta}_{\top} \to \CA$  
extends along the inclusion $\mathbf{\Delta}_{\top} \rightarrowtail \mathbf{\Delta}$
to a lax functor $\hat{A}\dd \mathbf{\Delta} \to \CA$ by defining $\hat{A}(0)=I$, $\partial_0 = \eta_A \dd I\to A$, and
\begin{eqnarray*}
\partial_n \dd = 1_{A\otimes B^{\otimes(n-1)}}\otimes \eta_B \dd A\otimes B^{\otimes(n-1)}\to A\otimes B^{\otimes n}  \ .
\end{eqnarray*}     
\end{proposition}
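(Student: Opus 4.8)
The plan is to obtain $\hat A$ by extending the lax functor $\bar A$ of Proposition~\ref{Abar} across exactly those generators and relations that $\mathbf{\Delta}$ adds to $\mathbf{\Delta}_{\top}$, and to show that the pointing hypotheses are precisely what is needed. Beyond the object $0$, the only morphisms adjoined to $\mathbf{\Delta}_{\top}$ are the top cofaces $\partial_n\dd n\to (n+1)$ (with $n\geq 0$); no new codegeneracies arise, since every codegeneracy preserves the top element and so already lies in $\mathbf{\Delta}_{\top}$. Having set $\hat A(0)=I$, $\hat A(\partial_0\dd 0\to 1)=\eta_A$ and $\hat A(\partial_n)=1_{A\otimes B^{\otimes(n-1)}}\otimes \eta_B$ (i.e. $\eta_B$ appended on the $\CW$-side via $\star$) for $n\geq 1$, and keeping $\hat A=\bar A$ on $\mathbf{\Delta}_{\top}$, it then remains to (i) declare composition constraints for composites involving a new generator, and (ii) verify that the cosimplicial and coface/codegeneracy identities of $\mathbf{\Delta}$ that involve a new generator are respected, coherently.

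For (ii) I would take every new composition constraint to be an identity and split the relations into three families. First, the identities $\partial_j\partial_i=\partial_i\partial_{j-1}$ and $\sigma_j\partial_i=\partial_i\sigma_{j-1}$ in which a top coface meets an operation on lower tensor factors hold strictly by bifunctoriality (interchange) of $\otimes$, because appending $\eta_B$ at the top commutes with anything applied below it. Second, the cancellation $\sigma_{n-1}\partial_n=1$ of the top coface by the top codegeneracy becomes $(1\otimes\varepsilon_B)\circ(1\otimes\eta_B)=1\otimes(\varepsilon_B\circ\eta_B)=1$, and $\partial_{n+1}\partial_n=\partial_n\partial_n$ becomes $\delta_B\circ\eta_B=\eta_B\otimes\eta_B$, so these two hold exactly because $\eta_B$ is a \emph{comonoid morphism}. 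Third, the single identity mixing the two new bottom generators with $\gamma$, namely $\partial_1\partial_0=\partial_0\partial_0$ at the base $0\to 1\to 2$, evaluates to $(1_A\otimes\eta_B)\circ\eta_A=\gamma\circ\eta_A$, so it holds exactly because $\gamma$ \emph{respects the pointings}, i.e. $\gamma\circ\eta_A=\eta_A\otimes\eta_B$. Thus the three hypotheses of the Proposition correspond precisely to the three families of new relations, and each is satisfied strictly.

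The substantive part, and the step I expect to be the main obstacle, is the coherence (associativity) of the lax constraints once the unit generators are present. On $\mathbf{\Delta}_{\top}$ the only non-identity constraint is the $2$-cocycle $\tau$, attached to the composite $\partial_0\partial_0$ of two copies of $\gamma$ (the ``neither factor in $\mathbf{\Delta}_{\bot,\top}$'' case of Proposition~\ref{Abar}), whose coherence is already encoded by $\tau$-coassociativity and $2$-cocyclicity. Since all new constraints are identities, every associativity triangle of $\hat A$ with a unit generator among its three arrows reduces, after cancelling identities, to naturality of $\otimes$ together with the coherence already established for $\bar A$. The cases needing genuine care are those in which the $\tau$-carrying composite $\partial_0\partial_0$ is adjacent to a newly inserted unit: here I would check that whiskering $\tau$ by an appended $\eta_B$, or feeding in $\eta_A$, is compatible, which follows from the \emph{normality} clauses of Definition~\ref{twcoact} together with the comonoid-morphism property of $\eta_B$. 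This bookkeeping is the bulk of the labour but introduces no condition beyond the three already identified.

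Assembling these verifications shows that $\hat A$ is a well-defined lax functor on $\mathbf{\Delta}$ extending $\bar A$, with the prescribed values on $0$, on $\partial_0\dd 0\to 1$, and on the top cofaces. I would close by noting that the construction is manifestly natural in the pointed data $(A,\eta_A)$ and $(B,\eta_B,\gamma,\tau)$, and that, exactly as in Proposition~\ref{Abar}, $\hat A$ is as strict as possible, its sole non-identity constraint being $\tau$.
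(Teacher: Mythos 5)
The paper states Proposition~\ref{Ahat} without proof, so your argument has to stand on its own. Your first two paragraphs are fine: the inventory of new generators (the top cofaces and $\partial_0\dd 0\to 1$, with no new codegeneracies), and the matching of the three families of new cosimplicial identities with the three pointing hypotheses, are correct and well organised. The gap is in the third paragraph, at exactly the spot you flag as ``needing genuine care,'' and your proposed resolution does not work. If, as you stipulate, \emph{every} new composition constraint is an identity, then coherence fails already for the composable triple $0\xrightarrow{\partial_0}1\xrightarrow{\partial_0}2\xrightarrow{\partial_0}3$. One side of the associativity square is the constraint $\tau\dd(\gamma\star 1)\gamma\Rightarrow(1\star\delta_B)\gamma$ whiskered on the right by $\hat A(\partial_0\dd 0\to 1)=\eta_A$; the other side is built entirely from the new (identity) constraints. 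In $\mathrm{Mon}\CV$ the whiskering $\tau\ast 1_{\eta_A}$ is, by the paper's formula for horizontal composition, $\tau\bullet\bigl((\gamma\otimes 1)\gamma\circ\eta_A\bigr)=\tau\bullet(\eta_A\otimes\eta_B\otimes\eta_B)=\tau$, viewed as an endo-$2$-cell of $\eta_{A\otimes B^{\otimes 2}}$; the identity $2$-cell there is $\eta_{A\otimes B^{\otimes 2}}$ itself. So coherence would force $\tau=\eta_A\otimes\eta_B\otimes\eta_B$, i.e.\ triviality of the $2$-cocycle. The normality clauses of Definition~\ref{twcoact} cannot rescue this: they assert that $\tau$ is killed by the \emph{counits} $\varepsilon_B$, and say nothing about whiskering $\tau$ by $\eta_A$; moreover Proposition~\ref{Ahat} is stated in the general $2$-categorical setting of Proposition~\ref{Abar}, where Definition~\ref{twcoact} is not even available.

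The proposition itself is not in danger, but the construction must be different from the one you describe: the constraints $c_{\zeta,\,!_m}$ for precomposition with the unique map $!_m\dd 0\to m$ cannot all be identities when $\zeta$ fails to preserve the bottom element. For instance $c_{\partial_0,\,!_2}$ must be taken to be $\tau$ itself (legitimately a $2$-cell $\eta_{A\otimes B^{\otimes 2}}\Rightarrow\eta_{A\otimes B^{\otimes 2}}$, since $\eta$ is the $\bullet$-unit), and more generally the constraint for $\zeta\circ !_m$ should be the whiskering by $\hat A(!_m)$ of the constraint $\bar A$ already carries over a chosen factorisation of $\zeta$. With that choice the associativity squares through $0$ do close up, and your final sentence (``its sole non-identity constraint being $\tau$'') has to be weakened accordingly. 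You should redo the coherence bookkeeping with these non-identity augmentation constraints in place; the interactions of $\tau$ with the appended $\eta_B$'s are then the part that genuinely uses the comonoid-morphism property, and the rest is interchange, as you say.
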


In particular, for braided monoidal $\CV$ and $\CW=\CA=\mathrm{Mon}\CV$ 
(with the action on itself by its own tensor product), each twisted coaction of
a bimonoid $B$ on a monoid $A$ determines a slightly lax (augmented) cosimplicial monoid  
$\hat{A}$ in $\CV$ with $\hat{A}(n) = A\otimes B^{\otimes (n-1)}$. 
Our terminology that $\tau$ is a normalized 2-cocycle is justified by the formulas
\begin{eqnarray*}
(\partial_1\tau)\bullet (\partial_3\tau) = (\partial_2\tau)\bullet (\partial_0\tau) \ \text{ , } \ \sigma_1\tau = 1 = \sigma_0\tau \ .  
\end{eqnarray*} 
 
\section{Monoidality}\label{monoidality}

The basis of this section is the pioneering work of Day \cite{DayNoMM, DayPFC}.

Suppose $T=(T,\mu,\eta)$ is a monoidal monad on the monoidal category $\CA$. 
Then the Kleisli category $\CA_T$ is canonically monoidal: 
on objects, which are the same as for 
$\CA$, the tensor product is that of $\CA$; on homs it is equal to
\begin{eqnarray*}
\CA_T(X,Y)\times \CA_T(X',Y')= \CA(X,TY)\times \CA(X',TY')\stackrel{\otimes}\lra \CA(X\otimes X',TY\otimes TY') \\
\stackrel{\CA(1_{X\otimes X'},\phi_{Y,Y'})}\lra \CA(X\otimes X',T(Y\otimes Y')) = \CA_T(X\otimes X',Y\otimes Y') \ .
\end{eqnarray*}
The canonical functor $\CA\to \CA_T$ is strict monoidal.

\begin{definition}\label{monopmonopwr}
{\em A mixed opwreath $(G,\zeta,\delta,\varepsilon)$ around the monoidal monad $T$
on $\CA$ (see Definition~\ref{defmopwr}) is {\em opmonoidal} when the lifted comonad
$\bar{G}=(\bar{G},\bar{\delta},\bar{\varepsilon})$ on $\CA_T$ is equipped with opmonoidal
structure.}
\end{definition}
By the dual of the fact that Kleisli categories of monoidal monads are canonically monoidal,
for an opmonoidal opwreath, the category $\mathrm{mkl}(G,\zeta,T)$ is canonically monoidal since it is the Kleisli category for the opmonoidal comonoid $\bar{G}$ on $\CA_T$. 

Let us spell out the data and axioms involved in Definition~\ref{monopmonopwr}, and the
monoidal structure on $\mathrm{mkl}(G,\zeta,T)$.

The data are morphisms
\begin{eqnarray}\label{Gpsi}
\psi_{X,X'}\dd G(X\otimes X')\lra T(GX\otimes GX')
\end{eqnarray}
indexed by pairs of objects $X,X'\in \CA$.
There are six axioms.
\begin{eqnarray}\label{Gpsi1}
\begin{aligned}
\xymatrix{
G(X\otimes X') \ar[r]^-{\psi_{X,X'}} \ar[d]_-{G(f\otimes f')} & T(GX\otimes GX') \ar[rr]^-{ T(Gf\otimes Gf')}  && T(GTY\otimes GTY') \ar[d]^-{T(\zeta_Y\otimes \zeta_{Y'})} \\ 
G(TY\otimes TY') \ar[d]_-{G\phi_{Y,Y'}}  & & & T(TGY\otimes TGY') \ar[d]^-{T\phi_{GY,GY'}} \\
GT(Y\otimes Y')  \ar[d]_-{\zeta_{Y\otimes Y'}} & & & TT(GY\otimes GY')  \ar[d]^-{\mu_{GY\otimes GY'}}\\
TG(Y\otimes Y') \ar[r]_-{T\psi_{Y,Y'}} & TT(GY\otimes GY') \ar[rr]_-{ \mu_{GY\otimes GY'}}  && T(GY\otimes GY')   
}
\end{aligned}
\end{eqnarray}

\begin{eqnarray}\label{Gpsi2}
\begin{aligned}
\xymatrix{
G(X\otimes X'\otimes X'') \ar[rr]^-{\psi_{X\otimes X',X''}} \ar[d]_-{\psi_{X,X'\otimes X''}} & & T(G(X\otimes X')\otimes GX'') \ar[d]^-{ T(\psi_{X,X'}\otimes \eta_{GX''})}  \\
T(GX\otimes G(X'\otimes X'')) \ar[d]_-{T(\eta_{GX}\otimes \psi_{X',X''})} && T(T(GX\otimes GX')\otimes TGX'') \ar[d]^-{T\phi_{GX\otimes GX',GX''}} \\ 
T(TGX\otimes T(GX'\otimes GX'')) \ar[d]_-{T\phi_{GX,GX'\otimes GX''}}  &  & TT(GX\otimes GX'\otimes GX'') \ar[d]^-{\mu_{GX\otimes GX'\otimes GX''}} \\
TT(GX\otimes GX'\otimes GX'') \ar[rr]_-{\mu_{GX\otimes GX'\otimes GX''}} & & T(GX\otimes GX'\otimes GX'')   
}
\end{aligned}
\end{eqnarray}

\begin{eqnarray}\label{Gpsi3}
\begin{aligned}
\xymatrix{
& T(TGX\otimes TI) \ar[rd]^-{ T\phi_{GX,I}}  & \\
T(GX\otimes GI) \ar[ru]^-{T(\eta_{GX}\otimes \varepsilon_I)} & & TTGX \ar[d]^-{\mu_{GX}} \\
GX \ar[rr]_-{\eta_{GX}}  \ar[u]^-{\psi_{X,I}} & & TGX }
\end{aligned}
\end{eqnarray}

\begin{eqnarray}\label{Gpsi4}
\begin{aligned}
\xymatrix{
& T(TI \otimes TGX) \ar[rd]^-{ T\phi_{I,GX}}  & \\
T(GI\otimes GX) \ar[ru]^-{T(\varepsilon_I\otimes \eta_{GX})} & & TTGX \ar[d]^-{\mu_{GX}} \\
GX \ar[rr]_-{\eta_{GX}}  \ar[u]^-{\psi_{I,X}} & & TGX }
\end{aligned}
\end{eqnarray}

\begin{eqnarray}\label{Gpsi5}
\begin{aligned}
\xymatrix{
& TGG(X\otimes X') \ar[rd]^-{ TG\psi_{X,X'}} \\
G(X\otimes X') \ar[ru]^-{\delta_{X\otimes X'}} \ar[d]_-{\psi_{X,X'}} &   & TGT(GX\otimes GX') \ar[d]^-{T\zeta_{GX\otimes GX'}} \\ 
T(GX\otimes GX') \ar[d]_-{T(\delta_X\otimes \delta_X')}  & &  TTG(GX\otimes GX') \ar[d]^-{TT\psi_{GX,GX'}} \\
T(TGGX\otimes TGGX')  \ar[d]_-{T\phi_{GGX,GGX'}} & &  TTT(GGX\otimes GGX')  \ar[d]^-{T\mu_{GGX\otimes GGX'}}\\
TT(GGX\otimes GGX') \ar[rd]_-{\mu_{GGX\otimes GGX'}\phantom{aaa}} &  & TT(GGX\otimes GGX') \ar[ld]^-{\phantom{aaa}\mu_{GGX\otimes GGX'}} \\
&  T(GGX\otimes GGX')    
}
\end{aligned}
\end{eqnarray}

\begin{eqnarray}\label{Gpsi6}
\begin{aligned}
\xymatrix{
& T(TX \otimes TX') \ar[rd]^-{ T\phi_{X,X'}}  & \\
T(GX\otimes GX') \ar[ru]^-{T(\varepsilon_X\otimes \epsilon_{X'})} & & TT(X\otimes X') \ar[d]^-{\mu_{X\otimes X'}} \\
G(X\otimes X') \ar[rr]_-{\varepsilon_{X\otimes X'}}  \ar[u]^-{\psi_{X,X'}} & & T(X\otimes X') }
\end{aligned}
\end{eqnarray}

Diagram~\eqref{Gpsi1} expresses the naturality of $\psi$.
Diagrams~\eqref{Gpsi2},~\eqref{Gpsi3},~\eqref{Gpsi4} express the opmonoidality of $\bar{G}$ when equipped with $\psi$.
Diagrams~\eqref{Gpsi5},~\eqref{Gpsi6} express the opmonoidality of $\bar{\delta}$, $\bar{\varepsilon}$, respectively.
One of the other conditions is that the nullary piece $\psi_0$ of opmonoidal
structure on $\bar{G}$ must be $\varepsilon_I$. 
This means that the nullary conditions for $\bar{\delta}$, $\bar{\varepsilon}$
to be opmonoidal are automatically satisfied; the less trivial of these is the former,
which amounts to the Diagram~\eqref{GRedundant}, yet that follows using 3 and 6
for a mixed opwreath.

\begin{eqnarray}\label{GRedundant}
\begin{aligned}
\xymatrix{
TGTI \ar[r]^-{T\zeta_{I}} & TTGI \ar[r]^-{TT\varepsilon_I} & TTTI \ar[d]^-{\mu_{TI}} \\
TGGI \ar[u]^-{TG\varepsilon_{I}} & & TTI \ar[d]^-{\mu_I} \\
GI \ar[rr]_-{\varepsilon_{I}} \ar[u]^-{\delta_{I}} & & TI
 }
\end{aligned}
\end{eqnarray}

\bigskip
Now we come to the monoidal structure on $\mathrm{mkl}(G,\zeta,T)$.
The tensor product of two objects $X,X'$ is the tensor product $X\otimes X'$
of the objects as objects of $\CA$. 
The tensor product of morphisms $f\dd X\to Y$ and $f'\dd X'\to Y'$ in 
$\mathrm{mkl}(G,\zeta,T)$, which are morphisms 
$f\dd GX\to TY$ and $f'\dd GX'\to TY'$ in $\CA$, is the composite 
\begin{eqnarray*}
G(X\otimes X')\stackrel{\psi_{X,X'}}\lra T(GX\otimes GX')\stackrel{T(f\otimes f')}\lra T(TY\otimes TY') \\
\stackrel{T\phi_{Y.Y'}}\lra TT(Y\otimes Y')\stackrel{\mu_{Y\otimes Y'}}\lra T(Y\otimes Y').
\end{eqnarray*}
The unit of this monoidal structure is the unit $I$ of $\CA$.
The Eckmann-Hilton argument \cite{EckHil} yields:

\begin{corollary}
If $(G,\zeta,\delta,\varepsilon)$ is an opmonoidal mixed opwreath around 
the monoidal monad $T$ on $\CA$ then the monoid $\mathrm{mkl}(G,\zeta,T)(I,I)$
of endomorphisms of $I$ is commutative. 
\end{corollary}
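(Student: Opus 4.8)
The plan is to reduce the statement to the classical Eckmann--Hilton argument \cite{EckHil}, applied to the endomorphism monoid of the unit object of the monoidal category $\mathrm{mkl}(G,\zeta,T)$. Granting the opmonoidal structure, essentially all the work has already been done: by the discussion preceding the corollary, the six axioms \eqref{Gpsi1}--\eqref{Gpsi6} make $\mathrm{mkl}(G,\zeta,T)$ into a monoidal category whose tensor product on objects is that of $\CA$ and whose unit object is the unit $I$ of $\CA$. In particular $\otimes$ is a functor of two variables on $\mathrm{mkl}(G,\zeta,T)$.

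First I would isolate the two binary operations on the set $\mathrm{mkl}(G,\zeta,T)(I,I)$. The operation whose commutativity is asserted is composition $\circ$, with two-sided unit the identity morphism $1_I$. Because $I$ is the monoidal unit, restricting the tensor product to endomorphisms of $I$ supplies a second operation $\otimes$ on the same underlying set, its values landing in $\mathrm{mkl}(G,\zeta,T)(I\otimes I,I\otimes I)=\mathrm{mkl}(G,\zeta,T)(I,I)$. Since the left and right unit constraints at $I$ coincide by Mac Lane coherence, $1_I$ is a two-sided unit for $\otimes$ as well; and functoriality of $\otimes$ in its two variables yields the middle-four interchange law
\[
(f\otimes f')\circ (g\otimes g') = (f\circ g)\otimes (f'\circ g')
\]
for all $f,f',g,g'\in \mathrm{mkl}(G,\zeta,T)(I,I)$.

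With a common unit and the interchange law available, the Eckmann--Hilton computation completes the argument. For $f,g\in \mathrm{mkl}(G,\zeta,T)(I,I)$ one has
\[
f\circ g = (f\otimes 1_I)\circ (1_I\otimes g) = (f\circ 1_I)\otimes (1_I\circ g) = f\otimes g,
\]
and similarly $f\otimes g = (1_I\circ f)\otimes (g\circ 1_I) = (1_I\otimes g)\circ (f\otimes 1_I) = g\circ f$. Hence $f\circ g = g\circ f$, so the monoid $\mathrm{mkl}(G,\zeta,T)(I,I)$ is commutative (and, incidentally, composition coincides with the tensor product).

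The only genuine content is the existence of the monoidal structure with unit $I$, which is precisely the opmonoidality packaged into \eqref{Gpsi1}--\eqref{Gpsi6} together with the Kleisli-category construction recalled just above; I expect no further obstacle, since once that structure is in place the corollary is a purely formal consequence.
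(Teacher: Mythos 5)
Your proof is correct and follows exactly the route the paper intends: the text immediately preceding the corollary sets up the monoidal structure on $\mathrm{mkl}(G,\zeta,T)$ with unit $I$ and then invokes the Eckmann--Hilton argument, which is precisely the interchange-law computation you carry out. Nothing further is needed.
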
 

\section{Monoidal twisted coactions}

In this section we will show what structure on a twisted coaction leads to opmonoidality
of the generated mixed opwreath of Proposition~\ref{mopwrfromtwcoact}.
 
We work in a braided monoidal category $\CV$.

\begin{definition}\label{montwcoact}
{\em A twisted coaction $(\gamma,\tau)$ of a bimonoid $B$ on a monoid $A$ (see Definition~\ref{twcoact})
is {\em monoidal} when it is equipped with a morphism $\mathfrak{d}\dd B\to A\otimes B\otimes B$ which satisfies the five conditions \eqref{montwcoact1} to \eqref{montwcoact4}.}
\end{definition}
\begin{center}
\begin{equation}\label{montwcoact1}
\begin{aligned}
\psscalebox{0.55 0.55} 
{
\begin{pspicture}(0,-5.0366325)(18.119638,5.0366325)
\pscircle[linecolor=black, linewidth=0.04, dimen=outer](1.8373892,3.358535){0.43378124}
\pscircle[linecolor=black, linewidth=0.04, dimen=outer](3.964983,0.59059745){0.43378124}
\pscircle[linecolor=black, linewidth=0.04, dimen=outer](8.674608,1.7266912){0.43378124}
\psline[linecolor=black, linewidth=0.04](1.899358,4.9697223)(1.8787018,3.7303474)
\psline[linecolor=black, linewidth=0.04](8.633295,4.907754)(8.633295,2.11916)
\psline[linecolor=black, linewidth=0.04](8.922483,1.4168475)(8.943139,-4.903965)
\psline[linecolor=black, linewidth=0.04](1.5482018,3.0693474)(0.019639282,-4.862653)
\psline[linecolor=black, linewidth=0.04](3.6757956,0.34272245)(0.28817055,-3.6645901)
\psline[linecolor=black, linewidth=0.04](8.385421,1.4168475)(0.24685803,-3.7059026)(0.24685803,-3.7059026)
\psline[linecolor=black, linewidth=0.04](4.212858,0.30140996)(4.1922016,-0.97927755)
\psline[linecolor=black, linewidth=0.04](4.1715455,-1.433715)(4.1095767,-4.924621)
\psline[linecolor=black, linewidth=0.04](1.8580456,2.94541)(2.7462642,-0.48352754)
\psline[linecolor=black, linewidth=0.04](2.8288894,-0.81402755)(3.1800456,-1.6609337)
\psline[linecolor=black, linewidth=0.04](3.303983,-1.9501213)(4.130233,-3.6852462)
\psline[linecolor=black, linewidth=0.04](2.2092018,3.1313162)(6.2991395,0.32206622)(6.278483,0.34272245)
\psline[linecolor=black, linewidth=0.04](6.547014,0.13615996)(8.901827,-1.7848713)
\psline[linecolor=black, linewidth=0.04](3.964983,1.0243787)(3.964983,1.6647224)
\psline[linecolor=black, linewidth=0.04](3.964983,2.1811287)(3.9856393,4.9697223)(3.9856393,4.9697223)
\rput[bl](1.76,3.236535){$\mathfrak{d}$}
\rput[bl](3.8,0.44700372){$\gamma$}
\rput[bl](8.5,1.5863162){$\gamma$}
\rput[bl](11.179639,0.79255056){$\Huge{=}$}
\pscircle[linecolor=black, linewidth=0.04, dimen=outer](15.29964,2.8525505){0.4}
\pscircle[linecolor=black, linewidth=0.04, dimen=outer](15.45964,-1.3474494){0.4}
\psline[linecolor=black, linewidth=0.04](14.899639,5.032551)(15.279639,3.2125506)
\psline[linecolor=black, linewidth=0.04](16.539639,4.9925504)(15.139639,4.0125504)
\psline[linecolor=black, linewidth=0.04](15.139639,-1.5474495)(13.479639,-4.927449)
\psline[linecolor=black, linewidth=0.04](15.439639,-1.7274494)(15.439639,-5.0074496)
\psline[linecolor=black, linewidth=0.04](15.75964,-1.5874494)(17.539639,-5.0274496)
\psline[linecolor=black, linewidth=0.04](15.539639,2.5725505)(15.45964,-0.9674494)
\psline[linecolor=black, linewidth=0.04](15.019639,2.5925505)(14.019639,-3.7874494)
\psline[linecolor=black, linewidth=0.04](13.139639,5.0125504)(14.739639,1.7125506)
\psline[linecolor=black, linewidth=0.04](14.91964,1.4725506)(15.479639,0.23255058)
\rput[bl](15.2,2.7063162){$\gamma$}
\rput[bl](15.4,-1.5){$\mathfrak{d}$}
\rput[bl](3.4196393,4.6525507){$A$}
\rput[bl](12.699639,4.6325507){$B$}
\rput[bl](8.039639,4.5925508){$A$}
\rput[bl](0.13963929,-4.7874494){$A$}
\rput[bl](13.619639,-4.8674493){$A$}
\rput[bl](14.33964,4.6525507){$A$}
\rput[bl](16.41964,4.6325507){$A$}
\rput[bl](17.47964,-4.8874493){$B$}
\rput[bl](15.45964,-4.8674493){$B$}
\rput[bl](8.439639,-4.8474493){$B$}
\rput[bl](4.1196394,-4.8274493){$B$}
\rput[bl](1.2996392,4.6525507){$B$}
\end{pspicture}
}
\end{aligned}
\end{equation}
\end{center}
\begin{center}
\begin{equation}\label{montwcoact2}
\begin{aligned}
\psscalebox{0.55 0.55} 
{
\begin{pspicture}(0,-4.1182923)(14.499621,4.1182923)
\pscircle[linecolor=black, linewidth=0.04, dimen=outer](1.5573716,2.5067673){0.43378124}
\psline[linecolor=black, linewidth=0.04](1.6193404,4.1179547)(1.5986842,2.8785796)
\psline[linecolor=black, linewidth=0.04](1.2681842,2.2175798)(0.01962166,-4.1144204)
\psline[linecolor=black, linewidth=0.04](1.578028,2.0936422)(1.9862467,0.52470475)
\psline[linecolor=black, linewidth=0.04](2.0288718,-0.24579528)(2.1000278,-4.0727015)
\rput[bl](1.4343404,2.3647673){$\mathfrak{d}$}
\rput[bl](0.13962166,-3.919217){$A$}
\rput[bl](1.0196216,3.800783){$B$}
\pscircle[linecolor=black, linewidth=0.04, dimen=outer](1.9973717,0.14676723){0.43378124}
\rput[bl](1.8743404,0.004767227){$\mathfrak{d}$}
\psline[linecolor=black, linewidth=0.04](1.6996217,-0.13921715)(0.45962167,-1.9392171)(0.47962165,-1.9592172)
\psline[linecolor=black, linewidth=0.04](2.2796216,-0.15921715)(4.1596217,-4.039217)(4.1596217,-4.039217)
\psline[linecolor=black, linewidth=0.04](1.8796216,2.280783)(5.559622,-4.059217)
\rput[bl](5.519622,-3.939217){$B$}
\rput[bl](4.1196218,-3.939217){$B$}
\rput[bl](2.1196218,-3.939217){$B$}
\rput[bl](6.1196218,0.68078285){$\Huge{=}$}
\pscircle[linecolor=black, linewidth=0.04, dimen=outer](10.037372,2.4667673){0.43378124}
\psline[linecolor=black, linewidth=0.04](10.03934,4.0979548)(10.018684,2.8585796)
\rput[bl](9.95434,2.324767){$\mathfrak{d}$}
\rput[bl](10.079621,3.800783){$B$}
\pscircle[linecolor=black, linewidth=0.04, dimen=outer](12.1973715,0.10676723){0.43378124}
\psline[linecolor=black, linewidth=0.04](10.399341,2.2779548)(12.178684,0.49857974)
\rput[bl](12.114341,-0.03523277){$\mathfrak{d}$}
\psline[linecolor=black, linewidth=0.04](9.679622,2.260783)(8.179622,-3.919217)(8.199621,-3.919217)
\psline[linecolor=black, linewidth=0.04](11.819622,-0.07921715)(8.619621,-1.9592172)
\psline[linecolor=black, linewidth=0.04](10.039621,2.080783)(9.999621,-0.87921715)
\psline[linecolor=black, linewidth=0.04](9.999621,-1.2992171)(9.979622,-3.979217)
\psline[linecolor=black, linewidth=0.04](12.159621,-0.27921715)(12.199621,-3.939217)(12.179622,-3.919217)
\psline[linecolor=black, linewidth=0.04](12.519622,-0.11921715)(13.819622,-3.959217)
\rput[bl](8.259622,-3.8992171){$A$}
\rput[bl](10.019622,-3.919217){$B$}
\rput[bl](12.259622,-3.939217){$B$}
\rput[bl](13.859622,-3.939217){$B$}
\end{pspicture}
}
\end{aligned}
\end{equation}
\end{center}
\begin{center}
\begin{equation}\label{montwcoact3}
\begin{aligned}
\psscalebox{0.55 0.55} 
{
\begin{pspicture}(0,-2.6667235)(14.2759,2.6667235)
\pscircle[linecolor=black, linewidth=0.04, dimen=outer](1.2370816,1.0551984){0.43378124}
\psline[linecolor=black, linewidth=0.04](1.2990505,2.666386)(1.2783942,1.427011)
\rput[bl](1.1140504,0.9131985){$\mathfrak{d}$}
\rput[bl](0.119331665,-2.590786){$A$}
\rput[bl](0.69933164,2.349214){$B$}
\psline[linecolor=black, linewidth=0.04](1.5593317,0.8292141)(2.1593316,0.009214096)
\rput[bl](7.4393315,-2.5107858){$B$}
\rput[bl](1.4393317,-2.590786){$B$}
\rput[bl](3.4793317,0.2892141){$\Huge{=}$}
\pscircle[linecolor=black, linewidth=0.04, dimen=outer](11.977081,1.0151985){0.43378124}
\psline[linecolor=black, linewidth=0.04](11.979051,2.646386)(11.958394,1.4070109)
\rput[bl](11.894051,0.87319845){$\mathfrak{d}$}
\rput[bl](12.019332,2.349214){$B$}
\psline[linecolor=black, linewidth=0.04](12.33905,0.826386)(14.258394,-2.652989)
\psline[linecolor=black, linewidth=0.04](11.979332,0.6292141)(11.939332,-0.3507859)
\rput[bl](5.8193316,-2.5107858){$A$}
\rput[bl](13.479332,-2.4907858){$B$}
\rput[bl](2.0393317,0.1892141){$B$}
\rput[bl](12.019332,-0.15078591){$B$}
\psline[linecolor=black, linewidth=0.04](0.91933167,0.8092141)(0.019331666,-2.590786)
\pscircle[linecolor=black, linewidth=0.04, dimen=outer](2.2493317,-0.1407859){0.17}
\rput[bl](8.359332,0.3092141){$\Huge{=}$}
\psline[linecolor=black, linewidth=0.04](1.3193316,0.6492141)(1.3393316,-2.610786)
\pscircle[linecolor=black, linewidth=0.04, dimen=outer](11.949331,-0.4807859){0.17}
\pscircle[linecolor=black, linewidth=0.04, dimen=outer](6.389332,0.8192141){0.17}
\psline[linecolor=black, linewidth=0.04](6.3993316,0.6492141)(6.4193316,-2.570786)
\psline[linecolor=black, linewidth=0.04](7.2993317,2.649214)(7.3193316,-2.590786)
\rput[bl](7.3793316,2.349214){$B$}
\psline[linecolor=black, linewidth=0.04](11.639332,0.7892141)(10.759332,-2.570786)
\rput[bl](10.939332,-2.5107858){$A$}
\end{pspicture}
}
\end{aligned}
\end{equation}
\end{center}
\begin{center}
\begin{equation}\label{montwcoact4}
\begin{aligned}
\psscalebox{0.55 0.55} 
{
\begin{pspicture}(0,-3.6709888)(18.359861,3.6709888)
\pscircle[linecolor=black, linewidth=0.04, dimen=outer](0.97986066,2.1649487){0.36}
\psline[linecolor=black, linewidth=0.04](0.67986065,1.9849486)(0.01986065,-3.5750513)
\pscircle[linecolor=black, linewidth=0.04, dimen=outer](4.5198607,-0.4950513){0.36}
\pscircle[linecolor=black, linewidth=0.04, dimen=outer](8.179861,-0.4950513){0.36}
\psline[linecolor=black, linewidth=0.04](4.1998606,-0.5950513)(0.15986066,-2.2950513)
\psline[linecolor=black, linewidth=0.04](7.879861,-0.67505133)(0.19986065,-2.2950513)
\psline[linecolor=black, linewidth=0.04](8.199861,-0.8350513)(8.199861,-3.6350513)
\psline[linecolor=black, linewidth=0.04](8.459861,-0.6950513)(9.499861,-3.6150513)
\psline[linecolor=black, linewidth=0.04](1.2798606,2.0049486)(6.7798605,-0.7750513)
\psline[linecolor=black, linewidth=0.04](7.0598607,-0.9550513)(8.079861,-1.2350513)
\psline[linecolor=black, linewidth=0.04](8.299861,-1.2750514)(8.699861,-1.3950514)
\psline[linecolor=black, linewidth=0.04](6.3598604,-1.1350513)(8.219861,-2.3350513)
\psline[linecolor=black, linewidth=0.04](1.0398606,1.8649487)(2.4798605,-0.4950513)
\psline[linecolor=black, linewidth=0.04](4.499861,-0.8150513)(4.499861,-1.2750514)(4.5198607,-1.2550513)
\psline[linecolor=black, linewidth=0.04](4.499861,-1.5350513)(4.439861,-3.4950514)
\psline[linecolor=black, linewidth=0.04](4.7798605,-0.67505133)(5.0198607,-1.1750513)
\psline[linecolor=black, linewidth=0.04](5.1198606,-1.3350513)(6.0398607,-3.5550513)
\psline[linecolor=black, linewidth=0.04](2.4798605,-0.4950513)(3.2398605,-0.8150513)
\psline[linecolor=black, linewidth=0.04](3.5798607,-0.9550513)(4.379861,-1.0350513)
\psline[linecolor=black, linewidth=0.04](4.5998607,-1.0350513)(4.999861,-1.0750513)(4.999861,-1.0750513)
\psline[linecolor=black, linewidth=0.04](2.4998608,-0.5150513)(2.7798607,-1.0150514)(2.7798607,-1.0350513)(2.7998607,-1.0150514)
\psline[linecolor=black, linewidth=0.04](2.9398606,-1.2550513)(3.3998606,-1.4750513)
\psline[linecolor=black, linewidth=0.04](3.6798606,-1.6750513)(4.479861,-2.0550513)
\psline[linecolor=black, linewidth=0.04](1.0398606,3.6449487)(1.0398606,2.4849486)(1.0598607,2.4849486)
\psline[linecolor=black, linewidth=0.04](6.1398606,-0.8950513)(5.0398607,0.124948695)
\rput[bl](9.659861,0.7249487){$\Huge{=}$}
\rput[bl](0.8998606,2.0449486){$\mathfrak{d}$}
\rput[bl](8.119861,-0.6150513){$\tau$}
\rput[bl](4.399861,-0.6350513){$\tau$}
\pscircle[linecolor=black, linewidth=0.04, dimen=outer](13.839861,2.0249486){0.36}
\rput[bl](13.759861,1.8649487){$\tau$}
\psline[linecolor=black, linewidth=0.04](13.55986,1.8249487)(11.739861,-3.6550512)
\pscircle[linecolor=black, linewidth=0.04, dimen=outer](16.73986,0.6849487){0.36}
\pscircle[linecolor=black, linewidth=0.04, dimen=outer](15.699861,-0.19505131){0.36}
\pscircle[linecolor=black, linewidth=0.04, dimen=outer](14.299861,-1.9150513){0.36}
\psline[linecolor=black, linewidth=0.04](14.159861,1.8449486)(16.759861,1.0049487)(16.71986,1.0249487)
\psline[linecolor=black, linewidth=0.04](15.39986,-0.3550513)(12.039861,-2.7750514)(12.499861,-1.3550513)
\psline[linecolor=black, linewidth=0.04](14.01986,-2.0950513)(12.01986,-2.7750514)
\psline[linecolor=black, linewidth=0.04](14.31986,-2.2350514)(14.2798605,-3.6350513)
\psline[linecolor=black, linewidth=0.04](14.619861,-2.0950513)(15.679861,-3.6350513)
\psline[linecolor=black, linewidth=0.04](15.959861,-0.4150513)(14.379861,-1.5950513)(14.39986,-1.6150513)
\psline[linecolor=black, linewidth=0.04](13.879861,1.6849487)(14.679861,-0.6950513)
\psline[linecolor=black, linewidth=0.04](14.7798605,-0.8950513)(14.97986,-1.1750513)(14.999861,-1.1750513)
\psline[linecolor=black, linewidth=0.04](16.47986,0.46494868)(15.59986,0.14494869)(15.619861,0.14494869)
\psline[linecolor=black, linewidth=0.04](16.759861,0.3649487)(15.159861,-2.6750512)
\psline[linecolor=black, linewidth=0.04](15.039861,-2.8350513)(14.839861,-3.5950513)
\psline[linecolor=black, linewidth=0.04](16.97986,0.46494868)(17.439861,-3.5950513)
\psline[linecolor=black, linewidth=0.04](12.09986,3.6649487)(12.759861,1.5849487)
\psline[linecolor=black, linewidth=0.04](12.739861,1.6449487)(13.299861,1.4649487)
\psline[linecolor=black, linewidth=0.04](13.499861,1.4049487)(13.85986,1.3249487)
\psline[linecolor=black, linewidth=0.04](12.759861,1.6049486)(13.199861,1.1249487)
\psline[linecolor=black, linewidth=0.04](13.419861,0.9849487)(14.379861,0.24494869)(14.35986,0.2649487)
\psline[linecolor=black, linewidth=0.04](14.119861,1.3049487)(16.33986,1.1249487)
\rput[bl](16.679861,0.5449487){$\mathfrak{d}$}
\rput[bl](14.259861,-2.0550513){$\mathfrak{d}$}
\rput[bl](15.59986,-0.3350513){$\gamma$}
\rput[bl](14.919861,-3.5750513){$B$}
\rput[bl](13.799861,-3.5750513){$B$}
\rput[bl](9.499861,-3.5550513){$B$}
\rput[bl](8.259861,-3.5550513){$B$}
\rput[bl](6.0798607,-3.5350513){$B$}
\rput[bl](4.5398607,-3.5350513){$B$}
\rput[bl](1.1398606,3.3049488){$B$}
\rput[bl](12.31986,3.2649486){$B$}
\rput[bl](15.659861,-3.5550513){$B$}
\rput[bl](16.89986,-3.5150514){$B$}
\rput[bl](11.93986,-3.5950513){$A$}
\rput[bl](0.11986065,-3.5550513){$A$}
\end{pspicture}
}
\end{aligned}
\end{equation}
\end{center}

\begin{proposition} Let $\CV$ be a braided monoidal category.
Given a monoidal twisted coaction $(\gamma,\tau)$ of a bimonoid $B$ on a 
commutative monoid $A$ (see Definition~\ref{twcoact}), the mixed opwreath
described in Proposition~\ref{mopwrfromtwcoact}, equipped with the morphisms
\begin{eqnarray*}
\psi_{X,X'}= (1_A\otimes 1_B\otimes c^{-1}_{X,B}\otimes 1_{X'})\circ (\mathfrak{d}\otimes 1_X\otimes 1_{X'}) \ ,
\end{eqnarray*}
is opmonoidal.
\end{proposition}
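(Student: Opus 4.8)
The plan is to verify, one axiom at a time, the six opmonoidality conditions \eqref{Gpsi1}--\eqref{Gpsi6} for the candidate structure $\psi$, translating everything into string diagrams in $\CV$. First I would fix the explicit form of the ambient monoidal monad: since $T=A\otimes-$ and $G=B\otimes-$, the monad multiplication and unit come from $\mu_A,\eta_A$, and the monoidal-functor structure $\phi_{Y,Y'}\dd A\otimes Y\otimes A\otimes Y'\to A\otimes Y\otimes Y'$ is obtained by braiding the second $A$-strand across $Y$ and then multiplying the two $A$'s with $\mu_A$. The hypothesis that $A$ is commutative is precisely what makes $T$ into a monoidal monad, so that Definition~\ref{monopmonopwr} even applies; moreover commutativity will be used throughout to merge and reorder the several $A$-strands produced by $\mathfrak{d}$, by the copies of $\eta_A$ hidden inside $\zeta,\delta,\varepsilon$ (Proposition~\ref{mopwrfromtwcoact}), and by $\mu_A$ inside $\phi$.

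With this dictionary each of the six diagrams becomes an equation between string diagrams built from $\gamma$, $\tau$, $\mathfrak{d}$, the (co)multiplications and (co)units of $B$, and the braiding $c$. I would then match them to the defining conditions of a monoidal twisted coaction. Diagram~\eqref{Gpsi1}, which expresses naturality of $\psi$ in the Kleisli sense, should reduce after cancelling the natural braiding isomorphisms to the compatibility of $\mathfrak{d}$ with the coaction $\zeta=(\eta_A\otimes 1_B)\bullet\gamma$, namely \eqref{montwcoact1}. The coassociativity diagram~\eqref{Gpsi2} should reduce to the $\mathfrak{d}$-coassociativity condition \eqref{montwcoact2}, while the two unit diagrams \eqref{Gpsi3} and \eqref{Gpsi4} should reduce to the two normality equations packaged in \eqref{montwcoact3}, using counitality of $\gamma$ and that $\varepsilon_B$ is a counit for $B$.

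The opmonoidality of the comultiplication, Diagram~\eqref{Gpsi5}, is the substantial step. Here $\delta=(\eta_A\otimes\delta_B)\bullet\tau$ feeds into two separate instances of $\psi$, each contributing a copy of $\mathfrak{d}$, and these must be reconciled with $T\zeta$ and with the bimonoid comultiplication on $B$. I expect it to collapse, after repeated use of commutativity of $A$ to fuse the accumulated $A$-strands, of the bimonoid axioms for $B$ (coassociativity of $\delta_B$ and compatibility of $\mu_B$ with $\delta_B$), and of Lemma~\ref{mopwrlem}, precisely onto the two-$\mathfrak{d}$ equation \eqref{montwcoact4}. Diagram~\eqref{Gpsi6}, opmonoidality of $\bar\varepsilon$, should then follow from $\varepsilon=\eta_A\circ\varepsilon_B$ together with the bimonoid axiom that $\mu_B$ preserves the counit and the normality of $\mathfrak{d}$, requiring no further condition of its own; this accounts for the six axioms being carried by the five listed conditions. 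The nullary compatibility is already disposed of by the choice $\psi_0=\varepsilon_I$, as observed before \eqref{GRedundant}.

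The main obstacle will be Diagram~\eqref{Gpsi5}: it is by far the largest, it is the only axiom in which two copies of $\psi$ (hence two $\mathfrak{d}$'s) interact, and the bookkeeping of which $A$-strand pairs with which $B$-strand through the braidings is delicate. The key simplification that makes it tractable is that commutativity of $A$ renders the order in which the $A$-strands are multiplied irrelevant, so that the genuinely nontrivial content is carried entirely by the $B$-strands and is exactly \eqref{montwcoact4}; confirming that the braiding moves line up correctly is the part that demands care.
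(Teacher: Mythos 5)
Your proposal follows essentially the same route as the paper: observe that commutativity of $A$ makes $T=A\otimes-$ a monoidal monad with $\phi_{X,X'}=\mu\circ(1_A\otimes c_{X,A}\otimes 1_{X'})$, render the six axioms \eqref{Gpsi1}--\eqref{Gpsi6} as string diagrams from which $f,f'$ and the objects $X,X'$ detach, and match what remains to the five conditions \eqref{montwcoact1}--\eqref{montwcoact4} (with \eqref{Gpsi6} needing only the normality equations \eqref{montwcoact3} already imposed). Your axiom-by-axiom correspondence agrees with the paper's accounting, so the argument is correct and not materially different.
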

\begin{proof}
Since $A$ is commutative, the arising monad $T=A\otimes -$ is monoidal with
$\phi_0=\eta\otimes 1_I$ and $\phi_{X,X'}= \mu \circ (1_A\otimes c_{X,A}\otimes 1_{X'})$.
Here are the string diagrams for $\psi$ and $\phi$.
\begin{center}
\begin{equation}\label{psiphi}
\begin{aligned}
\psscalebox{0.55 0.55} 
{
\begin{pspicture}(0,-2.7801228)(18.44,2.7801228)
\pscircle[linecolor=black, linewidth=0.04, dimen=outer](4.34,0.90012264){0.4}
\rput[bl](0.0,0.24012265){$\Huge{\psi_{X,X'} \ \ =}$}
\psline[linecolor=black, linewidth=0.04](4.36,2.7801228)(4.36,1.2601227)
\psline[linecolor=black, linewidth=0.04](4.06,0.64012265)(2.94,-2.5198774)(2.96,-2.5598774)
\psline[linecolor=black, linewidth=0.04](4.36,0.5401226)(4.38,-2.5198774)
\psline[linecolor=black, linewidth=0.04](4.6,0.6201227)(6.02,-2.5998774)
\psline[linecolor=black, linewidth=0.04](6.3,2.7601225)(5.4,-0.8798773)
\psline[linecolor=black, linewidth=0.04](5.28,-1.1998774)(5.08,-2.5398774)
\psline[linecolor=black, linewidth=0.04](7.08,2.7201226)(7.14,-2.6198773)
\rput[bl](4.26,0.78012264){$\mathfrak{d}$}
\psline[linecolor=black, linewidth=0.04](18.32,2.5601227)(18.38,-2.7798774)
\rput[bl](10.4,0.24012265){$\Huge{\phi_{X,X'} \ \ =}$}
\psline[linecolor=black, linewidth=0.04](13.96,2.5201225)(14.0,-2.7398775)
\psline[linecolor=black, linewidth=0.04](17.02,2.5401227)(14.02,-0.85987735)
\psline[linecolor=black, linewidth=0.04](15.14,2.5201225)(15.6,1.2201227)
\psline[linecolor=black, linewidth=0.04](15.76,0.8001226)(16.28,-2.6798773)
\rput[bl](3.82,2.4201226){$B$}
\rput[bl](2.38,-2.4598773){$A$}
\rput[bl](14.64,2.2201226){$X$}
\rput[bl](17.7,2.1801226){$X'$}
\rput[bl](6.1,-2.4798775){$B$}
\rput[bl](3.9,-2.4798775){$B$}
\rput[bl](5.68,2.4001226){$X$}
\rput[bl](7.14,2.4001226){$X'$}
\rput[bl](16.22,2.1801226){$A$}
\rput[bl](13.36,2.2201226){$A$}
\end{pspicture}
}
\end{aligned}
\end{equation}
\end{center}
With these data, and that of Proposition~\ref{mopwrfromtwcoact}, draw the string diagrams
for Diagrams~\eqref{Gpsi1} to \eqref{Gpsi6}. 
The remarkable fact is that the variables $f$ and $f'$ can be moved out of the top
of Diagrams~\eqref{Gpsi1}, while $X$ and $X'$ can be disconnected
from all the diagrams. 
Diagram~\eqref{Gpsi6} follows from Diagram~\eqref{montwcoact3}.
Then Diagrams~\eqref{montwcoact1} to \eqref{montwcoact4} are what remains. 
So indeed we obtain an opmonoidal mixed opwreath.
\end{proof}

\appendix

\end{document}